\theoremstyle{plain}
\newtheorem{theorem}{Theorem}[section]
\newtheorem{lemma}[theorem]{Lemma}
\newtheorem{proposition}[theorem]{Proposition}
\theoremstyle{definition}
\newtheorem{remark}[theorem]{Remark}
\numberwithin{equation}{section}
\theoremstyle{plain}
\numberwithin{equation}{section}
\begin{document}

\title[Geometric Inequalities for Quasi-Local Masses]
{Geometric Inequalities for Quasi-Local Masses}

\author[Alaee]{Aghil Alaee}
\address{Center of Mathematical Sciences and Applications\\
Harvard University\\
20 Garden Street\\
Cambridge, MA 02138, USA}
\email{aghil.alaee@cmsa.fas.harvard.edu}

\author[Khuri]{Marcus Khuri}
\address{Department of Mathematics\\
Stony Brook University\\
Stony Brook, NY 11794, USA}
\email{khuri@math.sunysb.edu}

\author[Yau]{Shing-Tung Yau}
\address{Department of Mathematics\\
Harvard University\\
1 Oxford Street\\
Cambridge, MA 02138, USA}
\email{yau@math.harvard.edu}

\thanks{A. Alaee acknowledges the support of NSERC Postdoctoral Fellowship 502873, the Gordon and Betty Moore Foundation, and the John Templeton Foundation. M. Khuri acknowledges the support of NSF Grant DMS-1708798. S.-T. Yau acknowledges the support of NSF Grant DMS-1607871.}

\begin{abstract}
In this paper lower bounds are obtained for quasi-local masses in terms of charge, angular momentum, and horizon area. In particular we treat three quasi-local masses based on a Hamiltonian approach, namely the Brown-York, Liu-Yau, and Wang-Yau masses. The geometric inequalities are motivated by analogous results for the ADM mass. They may be interpreted as localized versions of these inequalities, and are also closely tied to the conjectured Bekenstein bounds for entropy of macroscopic bodies. In addition, we give a
new proof of the positivity property for the Wang-Yau mass which is used to remove the spin condition in higher dimensions. Furthermore, we
generalize a recent result of Lu and Miao to obtain a localized version of the Penrose inequality for the static Wang-Yau mass.
\end{abstract}

\maketitle

\tableofcontents

\section{Introduction}
\label{sec1} \setcounter{equation}{0}
\setcounter{section}{1}

Based on heuristic arguments Bekenstein \cite{Bekenstein} proposed a universal upper bound on the entropy $\mathcal{S}$ of macroscopic bodies in terms of the radius $\mathcal{R}$ of the smallest sphere enclosing the object and its total energy $\mathcal{E}$. Generalizations including contributions from the angular momentum $\mathcal{J}$ and charge $Q$ of the body were later given in \cite{BekensteinMayo,Hod,Hod1,Zaslavskii} yielding the inequality
\begin{equation}\label{a}
 \sqrt{(\mathcal{E}\mathcal{R})^2 -c^2 \mathcal{J}^2} - \frac{Q^2}{2}
\geq \frac{\hbar c}{2 \pi k_b}\mathcal{S},
\end{equation}
where $c$ is the speed of light, $\hbar$ is the reduced Planck's constant, and  $k_b$ denotes Boltzmann's constant. This inequality is difficult to establish rigorously, and so it is natural to study a simpler estimate that is implied by \eqref{a} which can then serve as a test of validity for the original.
Such a reduced inequality may be obtained by using the fact that entropy is always nonnegative, namely
\begin{equation}\label{Bek}
\mathcal{E}^2\geq \frac{Q^4}{4\mathcal{R}^2}+\frac{c^2\mathcal{J}^2}{\mathcal{R}^2},
\end{equation}
Initial investigations of \eqref{Bek} were made by Dain \cite{Dain}, where the role of $\mathcal{E}$ is played by the ADM mass of an asymptotically flat spacetime containing the relativistic body; further work in this direction may be found in \cite{AngladaGabach-ClementOrtiz,JaraczKhuri}. In the present work we will obtain versions of \eqref{Bek} in which $\mathcal{E}$ is more appropriately represented by various quasi-local masses, specifically those quasi-local masses arising from a Hamiltonian approach including the Brown-York \cite{BrownYork}, Liu-Yau \cite{LiuYau0}, and Wang-Yau \cite{WangYauPMT1} masses. Moreover, the `bodies' that we consider will be allowed to contain black holes.

Another motivation of the present article is to prove localized versions of the Penrose inequality, as well as other well-known geometric inequalities implied by the Penrose inequality e.g. the mass-angular momentum-charge inequality, in which the role of ADM mass is played by the Hamiltonian based quasi-local masses.
It should be observed that the Bekenstein bound applied to black holes implies the full Penrose inequality. Indeed by setting
\begin{equation}
\mathcal{S}=\frac{k_{b}A_{e}}{4l_{p}^2},\quad\quad\quad
\mathcal{R}=\sqrt{\frac{A_e}{4\pi}},
\end{equation}
where $A_e$ is the event horizon area and $l_p=\sqrt{G\hbar/c^3}$ is the Planck length, inequality \eqref{a} with $\mathcal{E}$ denoting ADM mass yields
\begin{equation}\label{c}
\mathcal{E}^2\geq\left(\frac{c^4}{G}\sqrt{\frac{A}{16\pi}}+\sqrt{\frac{\pi}{A}}Q^2\right)^2
+\frac{4\pi c^2\mathcal{J}^2}{A},
\end{equation}
in which $A$ represents the minimum area required to enclose the horizon. The inequality is sharp in the sense that equality should be achieved only for the Kerr-Newman spacetime. The conventional derivation due to Penrose \cite{Penrose} shows that this inequality serves as a necessary condition for the weak cosmic censorship conjecture. Furthermore, results are typically stated within the framework of initial data sets for the Einstein equations that satisfy the relevant energy condition, and the inclusion of angular momentum/charge requires the absence of angular momentum/charge density outside the horizon in addition to axisymmetry (which is only needed for angular momentum). The Penrose inequality has been established in the case of maximal data by Bray \cite{Bray} and Huisken-Ilmanen \cite{HuiskenIlmanen}, and charge was added in \cite{KhuriWeinsteinYamada2,McCormick}. The inclusion of angular momentum is much more difficult and has not yet been established, although see \cite{Anglada,KhuriSokolowskyWeinstein} for partial results. Here we will establish Penrose-like inequalities involving angular momentum and charge for quasi-local masses.

The third purpose of the present work is to provide a new proof of positivity for the Wang-Yau mass, and as a result remove the spin condition used for this result in higher dimensions. The argument in \cite{WangYauPMT1} relies on the Bartnik-Shi-Tam exterior gluing construction \cite{ShiTam2002} applied to a Jang deformation \cite{SchoenYauII} of the relativistic body, where it is shown that Witten's spinor proof of the positive mass theorem \cite{Witten} is still valid and yields the desired result, even though the gluing is not smooth. Here we show how to smooth the gluing construction to allow for a conformal change to nonnegative scalar curvature, and thus avoid having to use spinors to establish positive mass.
Lastly, we will also generalize a result of Lu-Miao \cite{LuMiao} to obtain a localized version of the Penrose inequality for the static Wang-Yau mass \cite{ChenWangYau18}. In the remainder of the paper geometrized units will be used so that $c=G=1$.

\section{Statement of Main Results}
\label{sec2} \setcounter{equation}{0}
\setcounter{section}{2}

Let $\Omega$ denote an orientable connected spacelike hypersurface of a time-oriented spacetime $(N^{3,1},\mathbf{g})$.  On this hypersurface there is an induced positive definite metric $g$ and extrinsic curvature $k$, as well as induced electric and magnetic fields $E$ and $B$.  These quantities then satisfy the constraint equations
\begin{align}\label{1}
\begin{split}
16\pi \mu = R+(Tr_{g}k)^{2}-|k|_{g}^{2},&\quad\quad
8\pi J = \operatorname{div}_{g}(k-(Tr_{g}k)g),\\
4\pi \rho_e =\operatorname{div}_g E,& \quad\quad\quad\quad
4\pi \rho_b =\operatorname{div}_g B,
\end{split}
\end{align}
where $R$ is the scalar curvature, $\mu$ and $J$ represent energy and momentum density of the matter fields, and $\rho_e$, $\rho_b$ are the electric and magnetic charge density. Unless stated otherwise, $\Omega$ will be absent of charged matter so that $\rho_e =\rho_b =0$. Furthermore, the dominant energy condition $\mu\geq |J|_g$ will typically be assumed. In some cases a revised energy condition for the nonelectromagnetic matter fields will be used, namely $\mu_{EM}\geq|J_{EM}|_g$, where
\begin{equation}
\mu_{EM} = \mu-\frac{1}{8\pi}\left(|E|_{g}^{2}+|B|_{g}^{2}\right),\quad\quad\quad
J_{EM} = J +\frac{1}{4\pi}E\times B.
\end{equation}

The boundary $\partial\Omega$ of the spacelike hypersurfaces studied here will always have a single component untrapped outer piece $\Sigma$, and a possibly empty inner piece $\Sigma_h=\cup_{i=1}^{I}\Sigma_{h}^i$ consisting of apparent horizon components $\Sigma_h^i$. In addition, $\Omega$ may contain a finite number $J$ of asymptotically cylindrical ends and the union of the limiting cross-sections of these ends will be given the notation $\Sigma_{\infty}=\cup_{j=1}^{J}\Sigma_{\infty}^{j}$. The union $\Sigma_{*}=\Sigma_h \cup \Sigma_{\infty}$ will be referred to as the \textit{generalized inner boundary}. Recall that a 2-surface is untrapped if both null expansions (defined below in \eqref{nullexpansions}) are positive $\theta_{\pm}>0$, and it is referred to as a future $(+)$ or past $(-)$ apparent horizon if $\theta_{+}=0$ or $\theta_{-}=0$. In the time-symmetric case $k=0$, apparent horizons are minimal surfaces so that $H=0$, where the mean curvature $H$ is computed with respect to the unit normal $\nu$ tangent to $\Omega$ and pointing towards the outer boundary.

In 1993, Brown and York \cite{BrownYork} employed a Hamilton-Jacobi analysis to define the quasi-local energy and momentum for a spacelike 2-surface $\Sigma$ that bounds a compact spacelike hypersurface $\Omega$ in a time-oriented spacetime. Let $\sigma$ be the induced metric on $\Sigma$ then using Eulerian observers, that is unit lapse and vanishing shift, the Brown-York mass is defined by
\begin{equation}
m_{BY}(\Sigma)=\frac{1}{8\pi}\int_{\Sigma}\left(H_0-H\right)dA_{\sigma},
\end{equation}
where $H$ is the mean curvature of $\Sigma \hookrightarrow\Omega$ and $H_{0}$ is the mean curvature of an isometric embedding of $\Sigma\hookrightarrow\mathbb{R}^3$.
It is usually presumed that the Gauss curvature is positive $K_{\sigma}>0$ to obtain the existence of an isometric embedding into Euclidean 3-space, which is guaranteed by the result of Nirenberg and Pogorelov \cite{Nirenberg,Pogorelov}. Such an isometric embedding is unique up to rigid motions. Positivity of the Brown-York mass was established by Shi and Tam \cite{ShiTam2002}. Their result states the following, if $(\Omega,g)$ is compact and connected with nonnegative scalar curvature and mean convex boundary having positive Gauss curvature, then $m_{BY}(\Sigma)\geq 0$. Moreover, equality holds if and only if $(\Omega,g)$ is isometric to a domain in Euclidean space $\mathbb{R}^3$.
Recently they extend the proof to quasi positive Gauss curvature \cite{ShiTam2019}, this means that $K_{\sigma}$ is nonnegative and is positive somewhere. In \cite[Proposition 3.1]{McCormickMiao}, McCormick and Miao proved a Riemannian Penrose inequality for manifold with corners which implies a localized Riemannian Penrose inequality for the Brown-York mass. In particular, if $\Omega$ satisfies the same assumptions above and has an inner boundary $\Sigma_h$ that is an outermost minimal surface then
\begin{equation}\label{BYpenrose}
m_{BY}(\Sigma)\geq \sqrt{\frac{|\Sigma_h|}{16\pi}}.
\end{equation}
Furthermore Shi, Wang, and Yu proved a restricted rigidity statement in \cite[Theorem 2.1]{Shirigidity} for the Rimannian Penrose inequality on manifolds with corners. More precisely they show that if equality holds in \eqref{BYpenrose}, $\Sigma_h$ is strictly stable and strictly outerminimizing, for any point in $\Omega$ its minimal geodesic line to $\Sigma_h$ is contained in $\Omega$ and its distance function to $\Sigma_h$ is smooth, then $\Omega$ is a domain in the canonical slice of a Schwarzschild spacetime.

We will now consider lower bounds for the Brown-York mass in terms of charge.
The total electric and magnetic charge contained within $\Omega$ is given by
\begin{equation}
Q_e=\frac{1}{4\pi}\int_{\Sigma}g(E,\nu)dA_{\sigma},\quad\quad\quad
Q_b=\frac{1}{4\pi}\int_{\Sigma}g(B,\nu)dA_{\sigma},
\end{equation}
and the total charge squared is $Q^2=Q_e^2+Q_b^2$. The charges of individual boundary components $\Sigma_h^i$ and cylindrical end cross-sections $\Sigma_{\infty}^j$ are defined similarly, and denoted by $Q_h^i$ and $Q_{\infty}^j$ respectively. In order to state the result we will need the weak inverse mean curvature flow (IMCF) of Huisken and Ilmanen \cite{HuiskenIlmanen}. This flow $\{S_{t}\}_{t=0}^{t_0}\subset\Omega$ of 2-surfaces emanates from an inner boundary component or cylindrical end, where $t_0$ indicates the leaf of greatest area contained within $\Omega$. It may be interpreted as a flow by outermost minimal area enclosures, and as such it depends on the extension of $\Omega$ or rather the ambient manifold in which $\Omega$ resides. However, for all extensions in which the outer boundary $\Sigma$ is outerminimizing, the flow is unique and hence well-defined within $\Omega$ independent of such extensions. In this work only these type of extensions will be relevant, and we will refer to the unique flow as the \textit{indigenous IMCF} of $\Omega$.
Furthermore, the generalized inner boundary $\Sigma_*$ will be called outerminimizing if the area of any enclosing surface in $\Omega$ is not less than $|\Sigma_h|+|\Sigma_{\infty}|$.

\begin{theorem}\label{thmBYchargein}
Let $(\Omega,g)$ be a Riemannian manifold with divergence free electric and magnetic fields $E$ and $B$ satisfying $R\geq 2(|E|_{g}^2+|B|_{g}^2)$. Suppose that $\Omega$ is either compact or possesses asymptotically cylindrical ends with limiting cross-sections $\Sigma_{\infty}$, that the generalized inner boundary $\Sigma_*=\Sigma_h\cup \Sigma_{\infty}$ is strictly outerminimizing with $\Sigma_h$ the only compact minimal surface, and the single component $\Sigma$ is mean convex with positive Gauss curvature, then
\begin{equation}\label{BY1in}
m_{BY}(\Sigma)\geq \frac{1}{2} \left(\sum_{i=1}^{I}(Q_h^i)^2+\sum_{j=1}^{J}(Q_{\infty}^j)^2\right)^{\frac{1}{2}}.
\end{equation}
Moreover if $\Sigma_{*}$ has one component then
\begin{equation}\label{byq2}
m_{BY}(\Sigma)\geq \frac{|Q|}{2}+\alpha^2 \sqrt{\frac{\pi}{|\Sigma_{*}|}} Q^2,
\end{equation}
where
\begin{equation}
\alpha^2=1-\sqrt{\frac{|S_0|}{|S_{t_0}|}}
\end{equation}
and $\{S_{t}\}_{t=0}^{t_0}$ is the indigenous IMCF of $\Omega$.
\end{theorem}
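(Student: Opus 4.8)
The plan is to derive both inequalities from a single monotonicity principle for a charged version of the Hawking mass along the indigenous IMCF, and then to transfer the resulting bound to the Brown--York mass via the Shi--Tam extension. Concretely, I would introduce the charged Hawking mass
\begin{equation}
m_H^Q(S_t)=\sqrt{\frac{|S_t|}{16\pi}}\left(1-\frac{1}{16\pi}\int_{S_t}H^2\,dA\right)+Q^2\sqrt{\frac{\pi}{|S_t|}},
\end{equation}
where $\{S_t\}_{t=0}^{t_0}$ is the indigenous IMCF emanating from $\Sigma_*$, and show that it is nondecreasing whenever the leaves are connected. The two ingredients are (a) an interior monotonicity of $m_H^Q$ driven by the hypothesis $R\geq 2(|E|_g^2+|B|_g^2)$, and (b) the bound $m_H(S_{t_0})\leq m_{BY}(\Sigma)$ obtained by continuing the flow into a Shi--Tam exterior.

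For (a) I would first record that, since $E$ and $B$ are divergence free, the fluxes $\int_{S_t}g(E,\nu)\,dA=4\pi Q_e$ and $\int_{S_t}g(B,\nu)\,dA=4\pi Q_b$ are conserved along the flow. A pointwise Cauchy--Schwarz estimate then gives $\int_{S_t}(|E|_g^2+|B|_g^2)\,dA\geq 16\pi^2 Q^2/|S_t|$, so that $\int_{S_t}R\,dA\geq 32\pi^2 Q^2/|S_t|$. Feeding this into Geroch's monotonicity computation for the Hawking mass---in which the scalar curvature enters with coefficient $|S_t|^{1/2}(16\pi)^{-3/2}$---produces a contribution $\tfrac12 Q^2\sqrt{\pi/|S_t|}$ that exactly cancels the derivative $\tfrac{d}{dt}(Q^2\sqrt{\pi/|S_t|})=-\tfrac12 Q^2\sqrt{\pi/|S_t|}$, the remaining terms ($|\nabla H|^2/H^2$, $|\mathring{A}|^2$, and the Gauss--Bonnet deficit $4\pi(2-\chi)\geq 0$ for connected leaves) being nonnegative. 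Evaluating at the initial surface, where $\Sigma_*$ is minimal so that $H\equiv 0$, yields $m_H^Q(\Sigma_*)=\sqrt{|\Sigma_*|/16\pi}+Q^2\sqrt{\pi/|\Sigma_*|}$.

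For (b) I would invoke the Shi--Tam construction \cite{ShiTam2002}: isometrically embed $\Sigma$ into $\mathbb{R}^3$ (possible since $K_\sigma>0$) and glue on the quasi-spherical exterior, obtaining an asymptotically flat manifold with $R=0$ outside $\Omega$, ADM mass $m_{ADM}\leq m_{BY}(\Sigma)$, and a nonnegative distributional corner term along $\Sigma$ guaranteed by mean convexity, as in the corner Penrose inequality of McCormick--Miao \cite{McCormickMiao}. Continuing the indigenous flow into this exterior, the ordinary Hawking mass is nondecreasing across the corner and converges to $m_{ADM}$ \cite{HuiskenIlmanen}, so $m_H(S_{t_0})\leq m_{ADM}\leq m_{BY}(\Sigma)$. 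Combining this with (a) gives $m_{BY}(\Sigma)\geq \sqrt{|\Sigma_*|/16\pi}+Q^2\big(\sqrt{\pi/|\Sigma_*|}-\sqrt{\pi/|S_{t_0}|}\big)$, and since $|S_0|=|\Sigma_*|$ the parenthetical equals $\alpha^2\sqrt{\pi/|\Sigma_*|}$. Finally, stability of the minimal surface $\Sigma_*$ together with $R\geq 2(|E|_g^2+|B|_g^2)$ and Gauss--Bonnet yields the area--charge inequality $|\Sigma_*|\geq 4\pi Q^2$, whence $\sqrt{|\Sigma_*|/16\pi}\geq |Q|/2$ and \eqref{byq2} follows. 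For the first inequality \eqref{BY1in} I would instead use only the localized Penrose bound $m_{BY}(\Sigma)\geq\sqrt{|\Sigma_*|/16\pi}$ (the multi-component, cylindrical-end generalization of \eqref{BYpenrose}) together with the per-component area--charge estimates $|\Sigma_h^i|\geq 4\pi(Q_h^i)^2$ and $|\Sigma_\infty^j|\geq 4\pi(Q_\infty^j)^2$, summed and square-rooted.

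The hard part will be justifying these monotonicity statements in the weak (Huisken--Ilmanen) formulation of IMCF, which is unavoidable in the presence of several inner components and cylindrical ends: one must control the charged Geroch quantity across the jumps where the evolving surface is replaced by its outerminimizing hull, verify that the charge flux is preserved under these jumps, and confirm that the nonnegative corner term continues to force monotonicity of the plain Hawking mass in the glued manifold. A secondary technical point is extending the area--charge inequality to the limiting cross-sections of the asymptotically cylindrical ends, where minimality and stability hold only in the limit.
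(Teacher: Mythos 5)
Your proposal follows essentially the same route as the paper: inequality \eqref{byq2} is obtained there by exactly your charged Geroch monotonicity along the indigenous IMCF (written in integrated form as \eqref{8889}), transferred to the Brown--York mass via \cite[Theorem 3.1]{ShiTamHorizons} rather than by continuing the flow into the Shi--Tam exterior, and closed with the area--charge inequality of Proposition \ref{prop1}; inequality \eqref{BY1in} is likewise derived from the localized Penrose inequality $m_{BY}(\Sigma)\geq\sqrt{|\Sigma_*|/16\pi}$ together with per-component area--charge bounds. The one place where you lean on a citation that does not quite exist is the multi-component, cylindrical-end corner Penrose inequality: the paper must establish this itself (by truncating and capping the cylindrical ends, doubling across the minimal boundary, smoothing the corners as in \cite{Miao}, conformally deforming to nonnegative scalar curvature, and invoking Bray's theorem \cite{Bray}), since IMCF cannot handle several inner components and \cite{McCormickMiao} does not cover asymptotically cylindrical ends.
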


\begin{remark}
The inequality \eqref{BY1in} may be interpreted as a localized version of the
positive mass theorem with charge \cite{GibbonsHawkingHorowitzPerry}, in which the ADM mass and total charge of an asymptotically flat spacetime satisfy the inequality $m\geq |Q|$. The inequality \eqref{byq2} yields a Bekenstein-like bound without angular momentum. To see this let $\mathcal{R}=\sqrt{|\Sigma|/4\pi}$ denote the area radius, then
\begin{equation}
m_{BY}(\Sigma)\geq \frac{\alpha^2}{2} \sqrt{\frac{|\Sigma|}{|\Sigma_{*}|}} \frac{Q^2}{\mathcal{R}}
\geq \frac{\alpha^2}{2}\frac{Q^2}{\mathcal{R}}
\end{equation}
where we have used the outerminimizing property to find $|\Sigma|\geq|\Sigma_*|$. The scale invariant quantity $\alpha$ may be thought of as a type of measure for the size of $\Omega$. For large domains $\alpha\sim 1$ and for small domains $\alpha\sim 0$.
\end{remark}

\begin{remark}
Based on analogy with inequalities for the ADM mass, it might be expected that saturation of \eqref{BY1in} or \eqref{byq2} implies that $(\Omega,g)$ arises from the time slice of a Majumdar-Papapetrou or Reissner-Nordstr\"om spacetime. This, however, is not the case. Indeed, observe that the Brown-York mass of a coordinate sphere $S_r$ in the Reissner-Nordstr\"om black hole with mass $m$ and charge $Q$ is given by
\begin{equation}
m_{BY}(S_r)=r\left(1-\sqrt{1-\frac{2m}{r}
+\frac{Q^2}{r^2}}\right), \quad\quad\quad
r\geq  m+\sqrt{m^2-Q^2}, \quad\quad\quad m\geq |Q|.
\end{equation}
Since this expression is decreasing in $r$ and converges to $m$ as $r\to\infty$, it follows that $m_{BY}(S_r)\geq |Q|$ so that equality in \eqref{BY1in} is not possible for nonzero charge. This is valid even in the extremal case $m=|Q|$, which coincides with a single Majumdar-Papapetrou black hole. Furthermore, in the Reissner-Nordstr\"{o}m setting $|\Sigma_*|\geq 4\pi Q^2$ with equality only for the extreme black hole. Therefore the right-hand side of \eqref{byq2} satisfies
\begin{equation}
\frac{|Q|}{2}+\alpha^2 \sqrt{\frac{\pi}{|\Sigma_{*}|}} Q^2
\leq\frac{|Q|}{2}(1+\alpha^2)=|Q|\left(1-\frac{m+\sqrt{m^2-Q^2}}{2r}\right)<|Q|,
\end{equation}
showing that saturation in \eqref{byq2} does not occur.
\end{remark}

Consider now the problem of obtaining quasi-local mass lower bounds in terms of angular momentum. Angular momentum is best behaved in the setting of axisymmetry. We will say that the spacelike hypersurface $\Omega$ is axisymmetric if there exists a $U(1)$ subgroup within the isometry group of the Riemannian manifold $(\Omega, g)$, and all relevant quantities are invariant under the $U(1)$ action. So in particular, if $\eta$ denotes the generator of the $U(1)$ symmetry then
\begin{equation}
\mathfrak{L}_{\eta}g=\mathfrak{L}_{\eta}k=\mathfrak{L}_{\eta}E=\mathfrak{L}_{\eta}B=0,
\end{equation}
where $\mathfrak{L}$ represents Lie differentiation.
A 2-surface within $\Omega$, in particular its boundary, will be referred to as axisymmetric if the generator $\eta$ is tangent to the surface at all points. A similar limiting definition may be given for axisymmetric generalized boundary components $\Sigma_{\infty}$.
In axisymmetric spacetimes all the angular momentum is contained within matter fields, since gravitational waves do not carry angular momentum. The momentum tensor $p=k-\left(Tr_{g}k\right)g$ gives rise to the angular momentum density vector $p(\eta)$, which Brown and York \cite{BrownYork} use to define the angular momentum contained within $\Omega$, namely
\begin{equation}
\mathcal{J}_{BY}(\Sigma)=\frac{1}{8\pi}\int_{\Sigma}p(\eta,\nu) dA_{\sigma}
\end{equation}
where $\nu$ is the spacelike unit normal to $\Sigma$ pointing outside of $\Omega$.
An alternate definition given by Chen-Wang-Yau \cite{ChenWangYau15} states that
\begin{equation}
\mathcal{J}(\Sigma)=-\frac{1}{8\pi}\int_{\Sigma}\langle{}^N\nabla_\eta \nu, n\rangle dA_{\sigma},
\end{equation}
where $n$ is the future-directed timelike unit normal to $\Omega$. These two definitions agree in the axisymmetric setting, see Lemma \ref{sameAM}. We will denote the angular momentum of components for the generalized boundary $\Sigma_*$ by $\mathcal{J}_{h}^i$ and $\mathcal{J}_{\infty}^j$. Lastly, in the current context it is typical to assume the maximal condition $Tr_{g}k=0$, since unlike the time-symmetric case this allows for nonzero angular momentum while at the same time giving nonnegative scalar curvature through the dominant (or other) energy condition.

\begin{theorem}\label{thmBYAMin}
Let $(\Omega,g,k)$ be a maximal axisymmetric spacelike hypersurface which satisfies the dominant energy condition $\mu\geq |J|_g$.
Suppose that $\Omega$ is either compact or possesses asymptotically cylindrical ends with limiting cross-sections $\Sigma_{\infty}$, that the generalized inner boundary $\Sigma_*=\Sigma_h\cup \Sigma_{\infty}$ is strictly outerminimizing and axisymmetric with $\Sigma_h$ the only compact minimal surface, and the single component $\Sigma$ is mean convex with positive Gauss curvature, then
\begin{equation}\label{BY2in}
m_{BY}(\Sigma)\geq \frac{1}{\sqrt{2}} \left(\sum_{i=1}^{I}|\mathcal{J}_h^i|
+\sum_{j=1}^{J}|\mathcal{J}_{\infty}^j|\right)^{\frac{1}{2}}.
\end{equation}
Moreover if $\Sigma_{*}$ has one component, and there is vanishing angular momentum density $J(\eta)=0$ then
\begin{equation}\label{byam2}
m_{BY}(\Sigma)\geq \sqrt{\frac{|\mathcal{J}|}{2}}+\frac{(2\pi \alpha)^2}{\mathcal{C}^2} \sqrt{\frac{4\pi}{|\Sigma_{*}|}} \mathcal{J}^2,
\end{equation}
where $\mathcal{C}$ is the largest circumference of all $\eta$-orbits in $\Omega$ and
\begin{equation}
\alpha^2=1-\sqrt{\frac{|S_0|}{|S_{t_0}|}}
\end{equation}
with $\{S_{t}\}_{t=0}^{t_0}$ denoting the indigenous IMCF of $\Omega$.
\end{theorem}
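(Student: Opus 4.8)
The plan is to reduce both inequalities to the charge argument underlying Theorem~\ref{thmBYchargein}, by manufacturing an ``effective electric field'' out of the extrinsic curvature contracted against the axial Killing field. Since the data are maximal, the momentum tensor is $p=k$ and the Hamiltonian constraint reads $R=16\pi\mu+|k|_g^2$; the dominant energy condition gives $\mu\ge|J|_g\ge0$, whence $R\ge|k|_g^2$. I would define the vector field $V=k(\eta,\cdot)^\sharp$. Using the Killing equation, so that the contraction $k_{ij}\nabla^i\eta^j$ of a symmetric with an antisymmetric tensor vanishes, together with the momentum constraint $8\pi J=\operatorname{div}_g k$, one computes $\operatorname{div}_g V=8\pi J(\eta)$, while the flux of $V$ through any axisymmetric surface is $\int_S g(V,\nu)\,dA_\sigma=8\pi\mathcal{J}(S)$. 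Thus $V$ behaves exactly like a rescaled electric field whose enclosed ``charge'' is the angular momentum, and it is divergence free precisely when $J(\eta)=0$, which is the hypothesis imposed for \eqref{byam2}.

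Next I would establish the pointwise scalar-curvature bound. Viewing $k$ as a symmetric operator gives $|V|_g=|k(\eta)|_g\le|k|_g\,|\eta|_g$, so $R\ge|k|_g^2\ge|V|_g^2/|\eta|_g^2$. Because $\eta$ is Killing its norm is constant along each orbit, whose circumference is $2\pi|\eta|_g$; hence $|\eta|_g^2\le(\mathcal{C}/2\pi)^2$ pointwise, yielding
\begin{equation}
R\ge \frac{4\pi^2}{\mathcal{C}^2}|V|_g^2 = 2\left|\frac{\sqrt{2}\,\pi}{\mathcal{C}}V\right|_g^2.
\end{equation}
Setting $\mathcal{E}=\frac{\sqrt{2}\,\pi}{\mathcal{C}}V$ and $B=0$, the structural hypotheses of Theorem~\ref{thmBYchargein} are met (with $\mathcal{E}$ divergence free when $J(\eta)=0$), and the associated effective charge is $\mathcal{Q}=\frac{1}{4\pi}\int_{\Sigma}g(\mathcal{E},\nu)\,dA_\sigma=\frac{2\sqrt{2}\,\pi}{\mathcal{C}}\mathcal{J}$, so $\mathcal{Q}^2=\frac{8\pi^2}{\mathcal{C}^2}\mathcal{J}^2$.

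The core estimate is then the charged Hawking-mass monotonicity along the indigenous IMCF that drives Theorem~\ref{thmBYchargein}: integrating $\frac{d}{dt}\big[m_H(S_t)+\sqrt{\pi/|S_t|}\,\mathcal{Q}^2\big]\ge0$ from $\Sigma_*$ (where $H=0$) outward, together with the comparison between the Brown--York and Hawking masses for the mean-convex, positively curved outer boundary, yields the intermediate bound
\begin{equation}
m_{BY}(\Sigma)\ge \sqrt{\frac{|\Sigma_*|}{16\pi}}+\alpha^2\sqrt{\frac{\pi}{|\Sigma_*|}}\,\mathcal{Q}^2,
\qquad \alpha^2=1-\sqrt{|S_0|/|S_{t_0}|}.
\end{equation}
For the leading term I would invoke the area--angular momentum inequality $|\Sigma_*|\ge 8\pi|\mathcal{J}|$ for stable axisymmetric minimal surfaces (and its limiting form on the cylindrical cross-sections), which upgrades $\sqrt{|\Sigma_*|/16\pi}\ge\sqrt{|\mathcal{J}|/2}$; substituting $\mathcal{Q}^2$ into the second term reproduces $\frac{(2\pi\alpha)^2}{\mathcal{C}^2}\sqrt{4\pi/|\Sigma_*|}\,\mathcal{J}^2$, which is exactly \eqref{byam2}. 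For the multi-component estimate \eqref{BY2in}, where $J(\eta)=0$ is not assumed, I would discard the nonnegative charge correction and use only ordinary Hawking monotonicity under $R\ge0$ to obtain $m_{BY}(\Sigma)\ge\sqrt{|\Sigma_*|/16\pi}$, then apply the componentwise inequality $|\Sigma_*|=\sum_i|\Sigma_h^i|+\sum_j|\Sigma_\infty^j|\ge 8\pi(\sum_i|\mathcal{J}_h^i|+\sum_j|\mathcal{J}_\infty^j|)$.

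The hardest part will be justifying the area--angular momentum inequality in this setting: it requires that each horizon component $\Sigma_h^i$ be a \emph{stable} axisymmetric minimal surface, which I would extract from the strictly outerminimizing hypothesis, and it requires a limiting version along the asymptotically cylindrical cross-sections $\Sigma_\infty^j$. A secondary difficulty is the weak-IMCF bookkeeping inherited from Theorem~\ref{thmBYchargein}: constancy of the enclosed effective charge across jumps of the flow genuinely uses $\operatorname{div}_g V=0$, i.e. $J(\eta)=0$, which is why the sharp correction term survives only in the single-component statement \eqref{byam2}.
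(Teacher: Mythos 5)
Your proof of \eqref{byam2} is, underneath the packaging, the paper's own argument. The ``effective electric field'' $V=k(\eta,\cdot)^\sharp$ with $\operatorname{div}_gV=8\pi J(\eta)$, flux $8\pi\mathcal{J}$, and the pointwise bound $R\geq|k|_g^2\geq (4\pi^2/\mathcal{C}^2)|V|_g^2$ is exactly a reformulation of the chain the paper runs inside the Geroch monotonicity integral: $R\geq|k|^2\geq|k(|\eta|^{-1}\eta,\nu)|^2$, Cauchy--Schwarz against $\int_{S_t}k(\eta,\nu)\,dA_t=8\pi\mathcal{J}$ (constant along the indigenous IMCF because $J(\eta)=0$), and $\int_{S_t}|\eta|^2\leq|S_t|\,\mathcal{C}^2/4\pi^2$. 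Your normalization $\mathcal{Q}^2=8\pi^2\mathcal{J}^2/\mathcal{C}^2$ reproduces the coefficient $(2\pi\alpha)^2\mathcal{C}^{-2}\sqrt{4\pi/|\Sigma_*|}$ correctly, the comparison $m_{BY}(\Sigma)\geq m_H(S_{t_*})$ is the paper's appeal to Shi--Tam \eqref{888}, and the leading term $\sqrt{|\Sigma_*|/16\pi}\geq\sqrt{|\mathcal{J}|/2}$ comes from Proposition \ref{prop1} exactly as in the paper. So for the single-component inequality your proposal is correct and essentially identical in substance; the reduction-to-charge framing buys nothing new but is harmless.

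The one genuine gap is in your route to the multi-component inequality \eqref{BY2in}. You propose to get $m_{BY}(\Sigma)\geq\sqrt{|\Sigma_*|/16\pi}$ from ``ordinary Hawking monotonicity,'' but weak IMCF started at a multi-component inner boundary does not deliver the \emph{total} area $|\Sigma_*|=\sum_i|\Sigma_h^i|+\sum_j|\Sigma_\infty^j|$ in the Penrose term --- the Huisken--Ilmanen argument only controls a single connected component (and the indigenous IMCF is set up in the paper precisely for the one-component case). The paper instead derives this bound as \eqref{qw}: it doubles the glued manifold $\Omega\cup M_+$ across the minimal boundary (after truncating and capping the cylindrical ends), smooths the corners \`a la Miao, conformally deforms to nonnegative scalar curvature, and invokes Bray's Penrose inequality, which is valid for multiple horizon components; the outerminimizing hypothesis then converts the area of the outermost minimal surface into $|\Sigma_*|$. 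If you route \eqref{BY2in} through \eqref{qw} rather than through IMCF, the rest of your argument (componentwise application of $|\Sigma_h^i|\geq 8\pi|\mathcal{J}_h^i|$ and its cylindrical-end analogue from Proposition \ref{prop1}) goes through as in the paper.
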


\begin{remark}
Inequality \eqref{BY2in} is reminiscent of the mass-angular momentum inequality established for the ADM mass in \cite{Dain0}, whereas the second inequality \eqref{byam2} implies a Bekenstein-like bound along the lines of \eqref{Bek}. To see this later statement let $\mathcal{R}_c =\mathcal{C}/2\pi$ denote the `circumference radius', then utilizing the Penrose inequality \eqref{BYpenrose} (which is known to hold also for cylindrical ends \cite{Jaracz}) and the outerminimizing property of the generalized boundary produces
\begin{equation}
m_{BY}(\Sigma)^2\geq\frac{(2\pi \alpha)^2}{\mathcal{C}^2} \sqrt{\frac{4\pi}{|\Sigma_{*}|}} \mathcal{J}^2 m_{BY}(\Sigma)
\geq\frac{(2\pi \alpha)^2}{2\mathcal{C}^2} \mathcal{J}^2
=\frac{\alpha^2}{2}\frac{\mathcal{J}^2}{\mathcal{R}_c^2} .
\end{equation}
Furthermore, as with Theorem \ref{thmBYchargein}, the naive guess that saturation of either of the inequalities \eqref{BY2in}, \eqref{byam2} implies that $\Omega$ arises from a Kerr or extreme Kerr slice is not accurate.
\end{remark}

The two previous results may be combined to produce a lower bound for quasi-local mass involving both angular momentum and charge. Furthermore, the techniques yield a Penrose-like inequality with angular momentum and charge with the same structure as \eqref{c}.

\begin{theorem}\label{combinedamch}
Under the hypotheses of Theorem \ref{thmBYAMin} with divergence free electric and magnetic fields
\begin{equation}\label{combined1}
m_{BY}(\Sigma)\geq \frac{1}{2} \left(\sum_{i=1}^{I}\sqrt{(Q_{h}^i)^4 +4(\mathcal{J}_h^i)^2}
+\sum_{j=1}^{J}\sqrt{(Q_{\infty}^j)^4+4(\mathcal{J}_{\infty}^j)^2}\right)^{\frac{1}{2}}.
\end{equation}
Moreover if $\Sigma_{*}$ has one component then
\begin{equation}\label{combined2}
m_{BY}(\Sigma)^2\geq \left(\sqrt{\frac{|\Sigma_*|}{16\pi}}
+\alpha^2 \sqrt{\frac{\pi}{|\Sigma_*|}}Q^2\right)^2+\frac{\beta^2}{2} \frac{4\pi \mathcal{J}^2}{|\Sigma_*|},
\end{equation}
where $\beta=\alpha \mathcal{R}_* \mathcal{R}_c^{-1}$ and $\mathcal{R}_*=\sqrt{|\Sigma_*|/4\pi}$.
\end{theorem}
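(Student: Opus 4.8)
The plan is to treat the two assertions separately, both built on the same engine used for the localized Penrose bound \eqref{BYpenrose}: the Shi--Tam gluing of an asymptotically flat exterior across $\Sigma$, followed by the indigenous weak inverse mean curvature flow $\{S_t\}_{t=0}^{t_0}$ emanating from the generalized inner boundary $\Sigma_*$, together with the Geroch monotonicity of the Hawking mass $m_H(S)=\sqrt{|S|/16\pi}\,(1-\tfrac{1}{16\pi}\int_S H^2\,dA_\sigma)$. The Shi--Tam extension has ADM mass at most $m_{BY}(\Sigma)$, and Geroch monotonicity run out to spatial infinity in the glued manifold gives $m_{ADM}\geq m_H(S_{t_0})$, so one obtains the master inequality
\[
m_{BY}(\Sigma)\ \geq\ m_H(S_{t_0})\ \geq\ m_H(\Sigma_*)+\int_0^{t_0}\tfrac{d}{dt}m_H(S_t)\,dt .
\]
The two inputs I would isolate are the value $m_H(\Sigma_*)=\sqrt{|\Sigma_*|/16\pi}$ (since $\Sigma_h$ is minimal and the cylindrical cross-sections have vanishing limiting mean curvature) and the scalar-curvature decomposition in the maximal gauge with divergence free fields, $R=16\pi\mu_{EM}+2(|E|_g^2+|B|_g^2)+|k|_g^2\geq 2(|E|_g^2+|B|_g^2)+2\,k(\hat\eta,\nu)^2$, where $\hat\eta=\eta/|\eta|$. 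The key structural point is that the electromagnetic term and the angular-momentum component $k(\hat\eta,\nu)$ occupy \emph{disjoint} slots of $R$, so their contributions to the Geroch integrand add without double counting.

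For \eqref{combined1} I would not use the flow gain at all. The master inequality already yields $m_{BY}(\Sigma)\geq\sqrt{|\Sigma_*|/16\pi}$, and since $\Sigma_*$ is outerminimizing we may write $|\Sigma_*|=\sum_i|\Sigma_h^i|+\sum_j|\Sigma_\infty^j|$. Applying the area--charge--angular-momentum inequality $|\Sigma_h^i|\geq 4\pi\sqrt{(Q_h^i)^4+4(\mathcal{J}_h^i)^2}$ for stable axisymmetric apparent horizons (and its cylindrical-end analogue) componentwise and summing produces exactly $m_{BY}(\Sigma)\geq\tfrac12\big(\sum_i\sqrt{(Q_h^i)^4+4(\mathcal{J}_h^i)^2}+\sum_j\sqrt{(Q_\infty^j)^4+4(\mathcal{J}_\infty^j)^2}\big)^{1/2}$.

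For \eqref{combined2} I would exploit the flow gain with both fields present simultaneously. Because $E,B$ are divergence free and $\eta$ is Killing with $J(\eta)=0$, the fluxes $4\pi Q_e=\int_{S_t}g(E,\nu)$, $4\pi Q_b=\int_{S_t}g(B,\nu)$ and $8\pi\mathcal{J}=\int_{S_t}k(\eta,\nu)$ are conserved along the flow. Cauchy--Schwarz then gives $\int_{S_t}(|E|_g^2+|B|_g^2)\geq 16\pi^2Q^2/|S_t|$ and, writing $k(\eta,\nu)=|\eta|\,k(\hat\eta,\nu)$ and using $\int_{S_t}|\eta|^2\leq\mathcal{R}_c^2|S_t|$, also $\int_{S_t}k(\hat\eta,\nu)^2\geq 64\pi^2\mathcal{J}^2/(\mathcal{R}_c^2|S_t|)$. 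Feeding these into the Geroch term and integrating $dt$ against $|S_t|^{-1/2}=|\Sigma_*|^{-1/2}e^{-t/2}$ reproduces the factor $2(1-e^{-t_0/2})=2\alpha^2$, so the master inequality upgrades to the simultaneous linear bound
\[
m_{BY}(\Sigma)\ \geq\ \sqrt{\tfrac{|\Sigma_*|}{16\pi}}+\alpha^2\sqrt{\tfrac{\pi}{|\Sigma_*|}}\,Q^2+L,\qquad L=\tfrac{(2\pi\alpha)^2}{\mathcal{C}^2}\sqrt{\tfrac{4\pi}{|\Sigma_*|}}\,\mathcal{J}^2 .
\]
Writing $A=\sqrt{|\Sigma_*|/16\pi}+\alpha^2\sqrt{\pi/|\Sigma_*|}\,Q^2$ and squaring, $m_{BY}(\Sigma)^2\geq A^2+2AL\geq A^2+2\sqrt{|\Sigma_*|/16\pi}\,L$; a direct computation of the last term using $\mathcal{C}=2\pi\mathcal{R}_c$ and $\beta=\alpha\mathcal{R}_*\mathcal{R}_c^{-1}$ returns $\tfrac{\beta^2}{2}\,\tfrac{4\pi\mathcal{J}^2}{|\Sigma_*|}$ (indeed with a factor of two to spare), which is \eqref{combined2}.

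The genuinely hard part is inherited rather than new: existence and regularity of the weak flow, the validity of Geroch monotonicity across the Shi--Tam corner and down to a possibly disconnected $\Sigma_*$, and the area--charge--angular-momentum inequality are the deep ingredients, and for the combined statement they are invoked verbatim from the proofs of Theorems \ref{thmBYchargein} and \ref{thmBYAMin}. The one new subtlety I would check with care is the simultaneity claim: that the revised energy condition $\mu_{EM}\geq|J_{EM}|_g$ (rather than merely $\mu\geq|J|_g$) is what licenses the decomposition $R\geq 2(|E|_g^2+|B|_g^2)+2\,k(\hat\eta,\nu)^2$ with both terms retained, and that the two Cauchy--Schwarz steps may be applied on the same leaf without competing for the same curvature. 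Once this is secured, the squaring step and the bookkeeping of $\alpha,\beta,\mathcal{R}_*,\mathcal{R}_c$ are routine.
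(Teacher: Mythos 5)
Your proposal is correct and follows essentially the same route as the paper: \eqref{combined1} is obtained from the glued-manifold Penrose bound \eqref{qw} together with Proposition \ref{prop1} applied componentwise, and \eqref{combined2} comes from running the charge and angular-momentum Geroch gains simultaneously along the indigenous IMCF and then squaring. The only (harmless) deviations are algebraic: the paper keeps $|k|^2\geq k(\hat\eta,\nu)^2$ rather than your factor of $2$, and passes from the linear bound to \eqref{combined2} via $m_{BY}^2\geq A\,m_{BY}$ rather than expanding $(A+L)^2$, both of which yield the same (or in your case slightly stronger) conclusion.
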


\begin{remark}
Observe that inequality \eqref{combined2} yields the Bekenstein-like bound
\begin{equation}
m_{BY}(\Sigma)^2\geq\frac{\alpha^4}{4}\frac{Q^4}{\mathcal{R}^2}+\frac{\alpha^2}{2}
\frac{\mathcal{J}^2}{\mathcal{R}_c^2}.
\end{equation}
\end{remark}

The definition of Brown-York mass depends on the spacelike hypersurface $\Omega$. Motivated in part by a desire to remove this unwanted feature, Liu and Yau \cite{LiuYau0} proposed an alternative definition of quasi-local energy and momentum. Let $\Sigma$ be a spacelike 2-surface in spacetime $N^{3,1}$. The structure group of the normal bundle is $SO(1,1)$, so $\Sigma$ admits two smooth non-vanishing future directed null normal
vector fields $l_+$ and $-l_-$ such that $\langle l_+,l_-\rangle=2$ which are unique up to positive rescaling. In particular, when $\Sigma$ is the outer boundary of $\Omega$ we may write $l_{\pm}=\nu\pm n$ where as before $n$ and $\nu$ are the unit timelike and spacelike normal vectors to the surface $\Sigma$ with respect to $\Omega$. The null mean curvatures then take the form
\begin{equation}\label{nullexpansions}
\theta_\pm=\text{div}_{\Sigma}l_{\pm}=H\pm Tr_{\Sigma}k,
\end{equation}
and the mean curvature vector with norm squared is given by
\begin{equation}
\vec{H}=\frac{1}{2}\left(\theta_- l_+ +\theta_+ l_-\right)=H\nu-(Tr_{\Sigma}k)n,\quad\quad\quad
|\vec{H}|^2=\theta_+\theta_-=H^2-(Tr_{\Sigma}k)^2.
\end{equation}
Assume that $(\Sigma,\sigma)$ is of positive Gauss curvature so that it admits an isometric embedding into the Euclidean time slice of Minkowski space $\mathbb{R}^{3,1}$. If $\vec{H}_0 =H_0 \nu_0$ denotes the corresponding mean curvature vector of this embedding and the spacetime mean curvature vector is non-timelike $|\vec{H}|^2\geq 0$, then the Liu-Yau energy is defined as
\begin{equation}
m_{LY}(\Sigma)=\frac{1}{8\pi}\int_{\Sigma}\left(|\vec{H}_0|-|\vec{H}|\right)dA_\sigma
=\frac{1}{8\pi}\int_{\Sigma}\left(H_0-\sqrt{H^2-(Tr_{\Sigma}k)^2}\right)dA_\sigma,
\end{equation}
In \cite{LiuYau0,LiuYau} they prove that in the setting above, with the dominant energy condition $\mu\geq |J|_g$ valid on $\Omega$, the positivity property $m_{LY}(\Sigma)\geq 0$ holds and if equality is achieved then $(\Omega,g,k)$ must arise from an embedding into Minkowski space.

The Liu-Yau angular momentum is obtained as a decomposition of the Brown-York momentum surface density 1-form $p(\nu)^{T}$, where the superscript $T$ denotes projection onto the tangent space of $\Sigma$. Since $\Sigma$ is topologically a 2-sphere, by Hodge decomposition there exist two functions $\upsilon$ and $\varpi$ on $\Sigma$ which are unique up to a constant, such that $p(\nu)^{T}=d\upsilon +*_{\sigma} d\varpi$ where $*_\sigma$ is the Hodge star operator on $\Sigma$. Indeed, they may be obtained by solving
\begin{equation}
\Delta_{\sigma} \upsilon=\delta p(\nu)^{T},\quad\quad\quad \Delta_{\sigma} \varpi =-\star_{\sigma} d p(\nu)^{T},
\end{equation}
where $\delta$ is the codifferential. It may be checked that $d p(\nu)^{T}$ is independent of the choice of frame $\{\nu,n\}$ for the normal bundle of $\Sigma$. Hence $\mathbf{j}=*_{\sigma} d\varpi$ does not depend on $\Omega$, in contrast to $d\upsilon$. If $\eta^T$ is the tangential part, to the embedding of $\Sigma$ in $\mathbb{R}^3$, of an axisymmetric Killing field then the Liu-Yau angular momentum is
\begin{equation}
\mathcal{J}_{LY}(\Sigma)=\frac{1}{8\pi}\int_{\Sigma}\mathbf{j}(\eta^T) dA_\sigma.
\end{equation}
In Lemma \ref{sameAM} below we show that if $\Sigma$ is axisymmetric then this definition agrees with the previous one $\mathcal{J}_{LY}(\Sigma)=\mathcal{J}(\Sigma)$.

\begin{theorem}\label{thm2.7}
Let $(\Omega,g,k)$ be an initial data set for the Einstein equations satisfying the dominant energy condition $\mu\geq |J|_g$ which is strict on horizons. Suppose that $\Omega$ is compact with boundary consisting of a disjoint union $\partial\Omega=\Sigma_h \cup\Sigma$ where $\Sigma_h$ is a (nonempty) apparent horizon, no other closed apparent horizons are present, and the single component $\Sigma$ is untrapped with positive Gauss curvature, then there exists a nonzero positive constant $\gamma$ independent of horizon area such that
\begin{equation}\label{LY1in}
m_{LY}(\Sigma)\geq \frac{\gamma}{1+\gamma}\sqrt{\frac{|\Sigma_{h}|}{4\pi}}.
\end{equation}
Moreover, if in addition the enhanced energy condition $\mu_{EM}\geq|J_{EM}|_g$ holds, $(\Omega,g,k,E,B)$ and $\partial\Omega$ are axially symmetric, and $\Sigma_h$ is stable in the sense of apparent horizons then
\begin{equation}\label{LY2in}
m_{LY}(\Sigma)\geq \frac{\gamma}{1+\gamma}
\left(\sum_{i=1}^I \sqrt{(Q_{h}^i)^4+4(\mathcal{J}_{h}^i)^2}\right)^{\frac{1}{2}}.
\end{equation}
\end{theorem}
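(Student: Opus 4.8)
The plan is to combine the classical Minkowski inequality for convex surfaces in $\mathbb{R}^3$ with a self-improving rearrangement that manufactures the constant $\gamma$, the only genuinely geometric input concerning the horizon being a single area comparison. Write $|\vec{H}|=\sqrt{H^2-(Tr_{\Sigma}k)^2}$ for the norm of the spacetime mean curvature vector of $\Sigma$, so that by definition $m_{LY}(\Sigma)=\frac{1}{8\pi}\int_{\Sigma}\left(H_0-|\vec{H}|\right)dA_\sigma$. Since $\Sigma$ is untrapped we have $\theta_{\pm}=H\pm Tr_{\Sigma}k>0$ and hence $|\vec{H}|>0$; moreover, because a nonempty horizon is present the data cannot embed isometrically into Minkowski space, so the Liu--Yau positivity property recalled above is \emph{strict}, giving $m_{LY}(\Sigma)>0$. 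The hypothesis $K_{\sigma}>0$ produces a closed convex isometric embedding of $(\Sigma,\sigma)$ into $\mathbb{R}^3$, to which the Minkowski inequality applies to give $\frac{1}{8\pi}\int_{\Sigma}H_0\,dA_\sigma\geq\sqrt{|\Sigma|/4\pi}$.

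Next I would establish the area comparison $|\Sigma|\geq|\Sigma_h|$. Passing to the Jang graph of $(\Omega,g,k)$, the dominant energy condition renders the Jang metric $\bar{g}$ of nonnegative scalar curvature modulo a divergence term, while the strictness of the energy condition on horizons forces the Jang equation to blow up precisely at the apparent horizon $\Sigma_h$, which is thereby realized as the outermost minimal surface at the mouth of an asymptotically cylindrical end of cross-sectional area $|\Sigma_h|$; the absence of other closed apparent horizons guarantees that no competing minimal surface intervenes. Since $\Sigma$ encloses this outermost minimal surface, and outermost minimal surfaces are outerminimizing, we obtain $|\Sigma|\geq|\Sigma_h|$. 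Equivalently, one may run the indigenous inverse mean curvature flow outward from the cylindrical end and apply Geroch monotonicity of the Hawking mass, which furnishes the same comparison.

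The key step is the following rearrangement. Combining the Minkowski inequality with the definition of the mass yields
\[
m_{LY}(\Sigma)\geq\sqrt{\frac{|\Sigma|}{4\pi}}-\frac{1}{8\pi}\int_{\Sigma}|\vec{H}|\,dA_\sigma.
\]
Set $\gamma=8\pi\,m_{LY}(\Sigma)\left(\int_{\Sigma}|\vec{H}|\,dA_\sigma\right)^{-1}$, which is a finite positive number determined by $\Sigma$ alone and in particular independent of the horizon area, so that $\frac{1}{8\pi}\int_{\Sigma}|\vec{H}|\,dA_\sigma=\gamma^{-1}m_{LY}(\Sigma)$. Substituting and rearranging gives $m_{LY}(\Sigma)\left(1+\gamma^{-1}\right)\geq\sqrt{|\Sigma|/4\pi}$, whence
\[
m_{LY}(\Sigma)\geq\frac{\gamma}{1+\gamma}\sqrt{\frac{|\Sigma|}{4\pi}}\geq\frac{\gamma}{1+\gamma}\sqrt{\frac{|\Sigma_h|}{4\pi}},
\]
where the final step uses $|\Sigma|\geq|\Sigma_h|$; this is \eqref{LY1in}.

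Finally, for \eqref{LY2in} I would invoke the area--angular-momentum--charge inequality for stable axisymmetric apparent horizons: under the enhanced energy condition $\mu_{EM}\geq|J_{EM}|_g$, axial symmetry, and stability of $\Sigma_h$, each component satisfies $|\Sigma_h^i|\geq 4\pi\sqrt{(Q_h^i)^4+4(\mathcal{J}_h^i)^2}$. Summing over $i$ gives $|\Sigma_h|\geq 4\pi\sum_{i}\sqrt{(Q_h^i)^4+4(\mathcal{J}_h^i)^2}$, hence $\sqrt{|\Sigma_h|/4\pi}\geq\big(\sum_{i}\sqrt{(Q_h^i)^4+4(\mathcal{J}_h^i)^2}\big)^{1/2}$, and substitution into \eqref{LY1in} produces \eqref{LY2in}. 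I expect the principal obstacle to be the Jang-equation analysis underlying the area comparison: one must show that the equation admits a solution blowing up exactly at $\Sigma_h$ with clean cylindrical asymptotics of area $|\Sigma_h|$ and no spurious interior minimal surfaces, which is precisely where the strict energy condition on horizons and the absence of other apparent horizons are indispensable.
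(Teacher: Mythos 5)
Your argument has a fatal structural flaw: the constant $\gamma$ you define is tautological, and after unwinding it your conclusion carries no information about $m_{LY}(\Sigma)$. Indeed, with $\gamma=8\pi\,m_{LY}(\Sigma)\big(\int_{\Sigma}|\vec{H}|\,dA_\sigma\big)^{-1}$ one has identically $\tfrac{1+\gamma}{\gamma}m_{LY}(\Sigma)=\tfrac{1}{8\pi}\int_{\Sigma}H_0\,dA_\sigma$, so your final inequality $m_{LY}(\Sigma)\geq\tfrac{\gamma}{1+\gamma}\sqrt{|\Sigma_h|/4\pi}$ is literally equivalent to the Minkowski inequality $\tfrac{1}{8\pi}\int_{\Sigma}H_0\,dA_\sigma\geq\sqrt{|\Sigma_h|/4\pi}$ together with the qualitative fact $m_{LY}(\Sigma)>0$ (needed only to make $\gamma>0$). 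In particular, nothing prevents your $\gamma$ from being arbitrarily small when $m_{LY}$ is small while $|\Sigma_h|$ is not, so the bound is empty as a Penrose-type estimate. The theorem is about a \emph{specific} constant: as the remark following it states, $\gamma$ is the capacity-type quantity \eqref{definitiongamma}, namely $\|\nabla u\|^2_{L^2(\bar\Omega,\bar g)}$ normalized by $\sum_i\sqrt{4\pi|\Sigma_h^i|}$, where $u$ solves the conformal problem \eqref{equation1} on the Jang deformation. The paper's proof runs through the Jang graph blowing up at $\Sigma_h$, the Bartnik--Shi--Tam extension, the corner smoothing of Lemma \ref{gluing}, and the Herzlich-type variational lower bound of Proposition \ref{lemma5.5}, yielding $m_{LY}(\Sigma)\geq\bar{\mathbf m}\geq\lim_l\tilde{\mathbf m}_{\delta_l,T_l}+\tfrac{\gamma}{1+\gamma}\sum_i\sqrt{|\Sigma_h^i|/4\pi}$ with $\tilde{\mathbf m}_{\delta_l,T_l}\geq 0$. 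That same $\gamma$ is then reused additively alongside $\lambda$ in Theorem \ref{thm2.9}; your self-referential constant could not support that refinement.

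Two further points. First, your input ``a nonempty horizon forces $m_{LY}(\Sigma)>0$'' is essentially the qualitative content of the theorem itself: the Liu--Yau rigidity statement is proved for $\partial\Omega=\Sigma$, and extending it to data with apparent-horizon inner boundary is exactly what requires the Jang/blow-up machinery you are trying to bypass, so this step is close to circular. Second, the area comparison $|\Sigma|\geq|\Sigma_h|$ is not free: it requires showing the limiting cylindrical cross-section of the Jang graph is outerminimizing in the glued asymptotically flat manifold (after the smoothing and conformal adjustments that control the negative part of the scalar curvature), which is the content of the estimate \eqref{qw} and its analogues, not a one-line consequence of ``outermost minimal surfaces are outerminimizing.'' Your reduction of \eqref{LY2in} to \eqref{LY1in} via Proposition \ref{prop1} is the same as the paper's, but it inherits the emptiness of your version of \eqref{LY1in}.
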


\begin{remark}
Inequality \eqref{LY1in} is a Penrose-like inequality for the Liu-Yau energy, whereas \eqref{LY2in} is akin to a localized version of the ADM mass-angular momentum-charge inequality which has so far only been established in the maximal case \cite{ChruscielLiWeinstein,Dain0,KhuriWeinstein,SchoenZhou}. Furthermore, inequality \eqref{LY2in} holds without the assumption of axisymmetry if the angular momentum contribution from the right-hand side is dropped, while the stability hypothesis can be removed if $\Sigma_h$ has only one component \cite{AnderssonMetzger,Eichmair}. The constant $\gamma$ is invariant under rescalings of the metric and hence independent of $|\Sigma_h|$. Defined in \eqref{definitiongamma}, it is based on a Dirichlet energy and may be thought of as a type of capacity associated with $\Omega$. Lastly the hypothesis of a strict dominant energy condition on horizons may be removed in many circumstances by using the localized perturbations of Corvino and Huang \cite{CorvinoHuang}.
\end{remark}

The above result can be improved in the case when the horizon has a single component to obtain a localized Penrose-like inequality in the spirit of \eqref{c}, since the square of the $\eta$-orbit circumference $\mathcal{C}^2$ has the units of area. We will make use of the condition
\begin{equation}\label{frobenius}
\eta\wedge d\eta=0,
\end{equation}
which from Frobenius' theorem guarantees that the Killing field $\eta$ is hypersurface orthogonal.

\begin{theorem}\label{thm2.9}
Let $(\Omega,g,k,E,B)$ be an axisymmetric initial data set for the Einstein-Maxwell equations satisfying \eqref{frobenius}, $J(\eta)=0$, and the energy condition $\mu_{EM}\geq |J|_g$ which is strict on horizons. Suppose that $\Omega$ is compact with axisymmetric boundary consisting of a disjoint union $\partial\Omega=\Sigma_h \cup\Sigma$ where $\Sigma_h$ is a single component (nonempty) apparent horizon, no other closed apparent horizons are present, and the single component $\Sigma$ is untrapped with positive Gauss curvature, then there exists a nonzero positive constant $\lambda$ independent of horizon area such that
\begin{equation}\label{LY3in}
m_{LY}(\Sigma)^2\geq \left(\frac{\gamma}{1+\gamma}\sqrt{\frac{|\Sigma_{h}|}{4\pi}}
+\lambda \sqrt{\frac{\pi}{|\Sigma_h|}}Q^2\right)^2
+\frac{\lambda\gamma}{1+\gamma}\frac{8\pi^2\mathcal{J}^2}{\mathcal{C}^2},
\end{equation}
where $\gamma$ is as in Theorem \ref{thm2.7}. Moreover, the same inequality holds without the assumption of axisymmetry if the contribution from angular momentum is removed.
\end{theorem}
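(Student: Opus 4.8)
The plan is to transplant the combined Brown-York estimate \eqref{combined2} into the Liu-Yau setting by way of the Jang deformation and Shi-Tam exterior gluing that already yield the Penrose-type bound \eqref{LY1in}. First I would solve the Jang equation over $(\Omega,g,k)$, producing a graph $(\bar\Omega,\bar g)$ on which $\Sigma_h$ becomes a minimal surface of the same area $|\Sigma_h|$, the outer boundary $\Sigma$ is retained, and --- because the energy condition is imposed on the non-electromagnetic matter, $\mu_{EM}\ge|J|_g$ --- the electromagnetic energy survives as the scalar curvature bound $\bar R\ge 2(|E|_g^2+|B|_g^2)$. Axisymmetry together with $J(\eta)=0$ renders the flux $\frac{1}{8\pi}\int_S k(\eta,\nu)\,dA$ independent of the enclosing leaf $S$ and equal to $\mathcal J$, while the Frobenius condition \eqref{frobenius}, as I discuss below, should allow the angular momentum density to persist after the deformation as the additional bound $\bar R\ge 2\,k(\eta,\nu)^2/|\eta|^2$. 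As in the proof of Theorem \ref{thm2.7}, the Shi-Tam fill-in of $\Sigma$ then relates $m_{LY}(\Sigma)$ to the quasi-local mass of the deformed boundary through the Dirichlet capacity $\gamma$ of \eqref{definitiongamma}, producing the prefactor $\frac{\gamma}{1+\gamma}$.

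Next I would run the indigenous inverse mean curvature flow $\{S_t\}$ outward from the minimal surface $\Sigma_h$ and follow the Geroch-Hawking mass. At the minimal surface its value is $\sqrt{|\Sigma_h|/16\pi}$, which furnishes the Penrose term; along the flow the monotonicity integrand contains $\bar R$, and the two curvature bounds above feed in the charge and the angular momentum. Since the fields are divergence free the enclosed charge equals the total charge $Q$ on every leaf, so a first Cauchy-Schwarz gives $\int_{S_t}(|E|_g^2+|B|_g^2)\ge 16\pi^2 Q^2/|S_t|$; and, as each $\eta$-orbit has length $2\pi|\eta|\le\mathcal C$, a second gives $\int_{S_t}k(\eta,\nu)^2/|\eta|^2\ge 256\pi^4\mathcal J^2/(\mathcal C^2|S_t|)$. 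Inserting $|S_t|=|S_0|e^t$ and integrating the resulting $e^{-t/2}$ up to the outermost leaf produces the deficit $\alpha^2=1-\sqrt{|S_0|/|S_{t_0}|}$ exactly as in \eqref{byq2} and \eqref{byam2}. Combined with the capacity relation, this yields a \emph{linear} lower bound whose Penrose term $\frac{\gamma}{1+\gamma}\sqrt{|\Sigma_h|/4\pi}$ is augmented by a charge term $\lambda\sqrt{\pi/|\Sigma_h|}\,Q^2$ and an angular-momentum term proportional to $\lambda\,\mathcal J^2/(\mathcal C^2\sqrt{|\Sigma_h|})$, where the constant $\lambda$ is assembled from $\gamma$ and $\alpha^2$ and is therefore scale invariant, hence independent of $|\Sigma_h|$.

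The final step is algebraic. Squaring this linear bound and discarding the manifestly positive higher-order terms keeps the charge inside the square --- where it reproduces the full Reissner-Nordstr\"om combination, including the quartic $Q^4$ contribution --- while the angular momentum enters only through the cross term of the Penrose and angular-momentum pieces. In that cross term the factor $\sqrt{|\Sigma_h|}$ cancels, the numerical constants assemble into $8\pi^2$, and the capacity prefactors collapse to the product $\frac{\lambda\gamma}{1+\gamma}$, leaving precisely the Kerr-Newman structure of \eqref{c}, which is \eqref{LY3in}. Dropping the angular-momentum bound from the monotonicity removes every appeal to axisymmetry and to the Frobenius condition and leaves the charge-only inequality.

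The hard part will be showing that the angular momentum genuinely survives the Jang reduction as the pointwise bound $\bar R\ge 2\,k(\eta,\nu)^2/|\eta|^2$. In the maximal Brown-York situation this is immediate from $R=16\pi\mu+|k|^2$, but here the Jang equation consumes the second fundamental form, and one must use \eqref{frobenius} together with $J(\eta)=0$ to verify that the momentum component $k(\eta,\nu)$ is preserved by the graph deformation rather than absorbed into the Jang one-form, and that the twist potential it defines is globally single valued on $\bar\Omega$. Secondary difficulties are the bookkeeping that makes the same capacity $\gamma$ appear here as in Theorem \ref{thm2.7} while correctly isolating $\lambda$, the existence and connectedness of the weak flow across the corner along $\Sigma$, and the extraction of the strict positivity of $\lambda$ from the strict energy condition on the horizon.
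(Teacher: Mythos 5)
Your overall architecture (Jang reduction, Shi--Tam exterior gluing, Geroch monotonicity along an IMCF with Cauchy--Schwarz feeding in $Q$ and $\mathcal{J}$, then squaring a linear bound) is the right family of ideas, and you correctly flag the survival of $k(\eta,\nu)$ through the Jang deformation as the delicate point; the paper indeed resolves it exactly as you guess, by using \eqref{frobenius} to show $h(\eta,\bar\nu)=0$ on axisymmetric surfaces so that $(k-h)(\eta,\bar\nu)=k(\eta,\bar\nu)$. However, there is a genuine gap at the heart of your plan: the pointwise bound $\bar R\ge 2(|E|_g^2+|B|_g^2)$ (and its angular-momentum analogue) on the Jang metric is false. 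The Jang scalar curvature identity \eqref{Jangscalar} only gives $\bar R-2|X|_{\bar g}^2+2\operatorname{div}_{\bar g}X\ge 2|\bar E|^2_{\bar g}+|h-k|^2_{\bar g}$, and the divergence term cannot be discarded pointwise. Consequently you cannot run Geroch monotonicity directly in $(\bar\Omega,\bar g)$; one must first conformally deform by the solution $u$ of the Lichnerowicz-type problem \eqref{equation}, which produces a metric whose scalar curvature is exactly $2|\tilde{\mathbf{E}}|^2+|\tilde{\mathbf{k}}|^2$, and run the IMCF there.

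This is not a repairable detail within your scheme but forces a structural change, because the conformal factor tends to zero along the blown-up horizon: the cylindrical end is conformally closed (or the boundary is arranged to have zero Hawking mass), so the IMCF in the conformal metric starts from a point or a zero-Hawking-mass surface and the initial Hawking mass contributes \emph{nothing}. Your plan obtains the Penrose term $\sqrt{|\Sigma_h|/16\pi}$ from the Hawking mass of the initial leaf and then multiplies by a capacity factor coming from the Shi--Tam fill-in; in the paper the Shi--Tam step gives $m_{LY}(\Sigma)\ge\bar{\mathbf{m}}$ with no $\gamma$, and the decomposition $\bar{\mathbf{m}}=\tilde{\mathbf{m}}_{\delta,T}+\tfrac{1}{\pi}P(v_{\delta,T})$ splits the two contributions: the Dirichlet-energy lower bound for $P(v)$ (Proposition \ref{lemma5.5}) produces the entire term $\tfrac{\gamma}{1+\gamma}\sqrt{|\Sigma_h|/4\pi}$, while the IMCF in the conformal metric produces only the $Q^2$ and $\mathcal{J}^2$ terms. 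Relatedly, your description of $\lambda$ as built from $\alpha^2=1-\sqrt{|S_0|/|S_{t_0}|}$ does not survive this correction: since the conformally closed end forces $\tilde t_0\to\infty$, the deficit factor tends to $1$ and $\lambda$ is instead the scale-invariant ratio $\sqrt{|\tilde S_{t_0}|_{\tilde g}\,|\Sigma_h|}\,/\sup_t|\tilde S_t|_{\bar g}$ of areas measured in the conformal and Jang metrics, as in \eqref{lambdadefinition}.
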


\begin{remark}
The constant $\lambda$ given in \eqref{lambdadefinition} is invariant under rescalings of the metric, and hence is independent of horizon area. It is defined via the largest area in an inverse mean curvature flow within $\Omega$. Furthermore inequality \eqref{LY3in} implies a Bekenstein-like bound
\begin{equation}
m_{LY}(\Sigma)^2\geq
\lambda_{*}^2 \frac{Q^4}{4\mathcal{R}^2}
+\frac{2\lambda\gamma}{1+\gamma}\frac{\mathcal{J}^2}{\mathcal{R}_c^2},
\end{equation}
where $\mathcal{R}$ is the area radius of $\Sigma$, $\mathcal{R}_c$ is the circumference radius of $\Omega$, and $\lambda_*=\lambda\sqrt{|\Sigma|/|\Sigma_h|}$.
\end{remark}

O'Murchadha, Szabados, and Tod \cite{Murchadha} showed that there are examples of spacelike 2-surfaces $\Sigma\subset \mathbb{R}^{3,1}$ with positive Gauss curvature and spacelike mean curvature vector, but with $m_{LY}(\Sigma)>0$. This shows that the Liu-Yau mass/energy is `too positive', because an optimal quasi-local mass should vanish for surfaces in Minkowski space. In order to rectify this issue, Wang and Yau introduced a new definition of quasi-local mass/energy in \cite{WangYauPMT,WangYauPMT1} using Hamilton-Jacobi analysis and the notion of optimal isometric embeddings.

Let $\Sigma\hookrightarrow N^{3,1}$ be a compact spacelike 2-surface in spacetime with induced metric $\sigma$, and let $\{e_3,e_4\}$ be an orthonormal frame for its normal bundle where $e_3$ is spacelike and $e_4$ is future-directed timelike. Then the connection 1-form of the normal bundle in this gauge is given by
\begin{equation}
\alpha_{e_3}(\cdot)=\langle{}^N\nabla_{(\cdot)}e_3,e_4\rangle.
\end{equation}
The \textit{Wang-Yau data set} consists of $\Sigma$ together with the triple $(\sigma,|\vec{H}|, \alpha)$. Now let $\tau$ be a time function on $\Sigma$ satisfying the convexity condition
\begin{equation}\label{convexitycondition}
\left(1+|\nabla\tau|^2\right)K_{\hat{\sigma}}=K_{\Sigma}
+\left(1+|\nabla\tau|^2\right)^{-1}\text{det}(\nabla^2\tau)>0,
\end{equation}
where $K_{\hat{\sigma}}$ is the Gauss curvature of $\hat{\sigma}=\sigma+d\tau^2$. By the Nirenberg/Pogorelov theorem there exists a unique (up to rigid motion) isometric embedding into $\mathbb{R}^3$, and from this one obtains an isometric embedding
$\iota:\Sigma\hookrightarrow \mathbb{R}^{3,1}$. The time function on $\iota(\Sigma)$ is then given by $\tau=-\langle T_0,\iota\rangle$, where $T_0$ is the designated future timelike unit Killing field on $\mathbb{R}^{3,1}$. The projection of the embedding to the $\mathbb{R}^3$ which is orthogonal to $T_0$ will be denoted by $(\hat{\Sigma},\hat{\sigma})$, and its mean curvature with respect to the outer normal will be labeled $\hat{H}_0$; the mean curvature vector of $\iota(\Sigma)$ will be denoted $\vec{H}_0$.

Let $\{\bar{e}_3,\bar{e}_4\}$ be the unique orthonormal frame for the normal bundle of $\Sigma$ in $N^{3,1}$ such that $\bar{e}_3$ is spacelike, $\bar{e}_4$ is future-directed timelike, and
\begin{equation}\label{con1}
\langle\vec{H},\bar{e}_3\rangle >0,\quad\quad\quad \langle\vec{H},\bar{e}_4\rangle
=\frac{-\Delta\tau}{\sqrt{1+|\nabla\tau|^2}}.
\end{equation}
The Wang-Yau energy with respect to the observer determined by the embedding and time function is defined to be
\begin{equation}\label{wangyauenergy}
E_{WY}(\Sigma,\iota,\tau)
=\frac{1}{8\pi}\int_{\Sigma}\left(\mathfrak{H}_0-\mathfrak{H}\right)dA_\sigma,
\end{equation}
where the \textit{generalized mean curvature} is
\begin{equation}
\mathfrak{H}=\sqrt{1+|\nabla\tau|^2}\langle\vec{H},\bar{e}_3\rangle
-\alpha_{\bar{e}_3}(\nabla\tau),
\end{equation}
and if $\vec{H}_0$ is spacelike then $\mathfrak{H}_0=\sqrt{1+|\nabla\tau|^2}\hat{H}_0$.
If $\tau=0$ it is clear that this definition agrees with the Liu-Yau energy. Moreover if $\Sigma$ lies in Minkowski space then $\mathfrak{H}_0=\mathfrak{H}$, so that the Wang-Yau energy vanishes. This rectifies the issue found in \cite{Murchadha} for the Liu-Yau energy. It should be pointed out that the choice of gauge $\{\bar{e}_3,\bar{e}_4\}$ for the normal bundle minimizes the surface Hamiltonian \cite[Section 2]{WangYauPMT1}. Namely, if $\{e_3,e_4\}$ is any other oriented orthonormal frame of the normal bundle, and $\vec{H}$ is spacelike then
\begin{equation}\label{genmean}
\int_{\Sigma}\mathfrak{H}(e_3,e_4) dA_{\sigma}
\geq\int_{\Sigma}\mathfrak{H}(\bar{e}_3,\bar{e}_4) dA_{\sigma}.
\end{equation}

The surface $\Sigma$ and time function $\tau$ are said to be \textit{admissible} if the convexity condition \eqref{convexitycondition} is satisfied, $\Sigma$ arises as the untrapped boundary of a spacelike hypersurface $(\Omega,g,k)\hookrightarrow N^{3,1}$, and the generalized mean curvature is positive $\mathfrak{H}(e_{3}',e_{4}')>0$ for the normal bundle frame $\{e_{3}',e_{4}'\}$ determined by the solution of Jang's equation, see Definition 5.1 of \cite{WangYauPMT1}. For admissible sets it is proven in \cite{WangYauPMT, WangYauPMT1} that the energy is always nonnegative $E_{WY}(\Sigma,\iota,\tau)\geq 0$. Furthermore, if the energy vanishes then the initial data $(\Omega,g,k)$ arises from an embedding into Minkowski space.

In analogy with special relativity, the Wang-Yau mass is defined as the
infimum of energy over all admissible observers $(\iota,\tau)$. To facilitate this
the energy may be rewritten as
\begin{equation}
E_{WY}(\Sigma,\iota,\tau)
=\frac{1}{8\pi}\int_{\Sigma}\left(\varrho +\sigma(\mathfrak{j},\nabla\tau)\right)dA_\sigma,
\end{equation}
where
\begin{equation}
\varrho=\frac{\sqrt{|\vec{H}_0|^2+\frac{(\Delta\tau)^2}{1+|\nabla\tau|^2}}
-\sqrt{|\vec{H}|^2+\frac{(\Delta\tau)^2}{1+|\nabla\tau|^2}}}{\sqrt{1+|\nabla\tau|^2}},  \quad\quad \mathfrak{j}=\varrho\nabla\tau
-\nabla\sinh^{-1}\left(\frac{\varrho\Delta\tau}{|\vec{H}_0||\vec{H}|}\right)
+\alpha_{\vec{H}_0}-\alpha_{\vec{H}},
\end{equation}
and $\vec{H}_0$ is the mean curvature vector associated with $\iota(\Sigma)\subset\mathbb{R}^{3,1}$ which is assumed to be spacelike. The Euler-Lagrange equation then becomes
\begin{equation}
\text{div}_{\sigma}\mathfrak{j}=0,
\end{equation}
and when coupled with the isometric embedding equation
\begin{equation}
\langle d\iota,d\iota\rangle_{\mathbb{R}^{3,1}}=\sigma,
\end{equation}
this becomes the \textit{optimal isometric embedding system}. Thus,
a critical point of the Wang-Yau energy corresponds to an optimal isometric embedding. If the infimum is achieved then the Wang-Yau mass is given by
\begin{equation}
m_{WY}(\Sigma)=\frac{1}{8\pi}\int_{\Sigma}\varrho dA_\sigma.
\end{equation}
In general the optimal isometric embedding may not be unique, but as shown in \cite{ChenWangYau14} it is unique locally if $\varrho>0$. Furthermore, nonnegativity of the energy as discussed above implies nonnegativity of the mass, and the mass is zero for surfaces in Minkowski space.

The optimal isometric embedding gives the `best match' to the physical surface data $(\Sigma,\sigma,|\vec{H}|,\alpha)$. This optimal reference surface can then be used to define other conserved quantities such as quasi-local angular momentum \cite{ChenWangYau15}. Let $\eta$ denote a Killing field generating the axisymmetry in $\mathbb{R}^{3,1}$, then the Chen-Wang-Yau angular momentum \cite{ChenWangYau13} for an optimal isometric embedding is
\begin{equation}
\mathcal{J}_{CWY}(\Sigma,\iota, \tau)=\frac{1}{8\pi}\int_{\Sigma}\left(\varrho\langle\eta,T_0\rangle
+\mathfrak{j}(\eta^T)\right)dA_\sigma.
\end{equation}
As shown in \cite[Proposition 6.1]{ChenWangYau15}, if the spacetime $N^{3,1}$ and surface are axially symmetric then
\begin{equation}
\mathcal{J}_{CWY}(\Sigma,\iota,\tau)=\mathcal{J}(\Sigma).
\end{equation}
Moreover, it is shown in \cite[Theorem 3]{ChenWangYau14} that if the surface $\Sigma$ has positive Gauss curvature, and $\tau=0$ is a solution of the optimal isometric embedding equation with $|\vec{H}_0|>|\vec{H}|>0$, then $\tau=0$ minimizes the Wang-Yau energy among axisymmetric time functions with $K_{\hat{\sigma}}>0$.
It is possible then in this situation that the Liu-Yau energy is the minimizer of Wang-Yau energy, in which case Theorems \ref{thm2.7} and \ref{thm2.9} would also hold for the Wang-Yau mass. With a different approach we are able to establish such lower bounds for general admissible $\tau$.

\begin{theorem}\label{thm2.11}
Let $(\Sigma,\sigma,|\vec{H}|,\alpha)$ be a Wang-Yau data set arising from a spacetime $N^{3,1}$, with optimal isometric embedding $(\iota,\tau)$. Assume that $\Sigma$ and $\tau$ are admissible, and let $(\Omega,g,k)\hookrightarrow N^{3,1}$ be the bounding spacelike hypersurface. Suppose that
$(\Omega,g,k)$ satisfies the dominant energy condition $\mu\geq |J|_g$ which is strict on horizons, is compact with boundary consisting of a disjoint union $\partial\Omega=\Sigma_h \cup\Sigma$ where $\Sigma_h$ is a (nonempty) apparent horizon, and no other closed apparent horizons are present, then there exists a nonzero positive constant $\gamma$ independent of horizon area such that
\begin{equation}\label{WY1in}
m_{WY}(\Sigma)\geq \frac{\gamma}{1+\gamma}\sqrt{\frac{|\Sigma_{h}|}{4\pi}}.
\end{equation}
Moreover, if in addition the enhanced energy condition $\mu_{EM}\geq|J_{EM}|_g$ holds, $(\Omega,g,k,E,B)$ and $\partial\Omega$ are axially symmetric, and $\Sigma_h$ is stable in the sense of apparent horizons then
\begin{equation}\label{WY2in}
m_{WY}(\Sigma)\geq \frac{\gamma}{1+\gamma}
\left(\sum_{i=1}^I \sqrt{(Q_{h}^i)^4+4(\mathcal{J}_{h}^i)^2}\right)^{\frac{1}{2}}.
\end{equation}
\end{theorem}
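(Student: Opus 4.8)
The plan is to run the proof of Theorem \ref{thm2.7} for the Liu--Yau energy, but to reach its hypotheses from the Wang--Yau setting by means of Jang's equation: for a general admissible time function $\tau$, solving Jang's equation converts the problem into a time-symmetric (Riemannian) Brown--York problem on a graph, after which the Dirichlet-energy/capacity estimate responsible for the constant $\gamma/(1+\gamma)$ applies directly. Since the infimum defining $m_{WY}(\Sigma)$ is achieved at the optimal isometric embedding $(\iota,\tau)$, it suffices to bound the energy $E_{WY}(\Sigma,\iota,\tau)$ from below at that embedding and then conclude the same bound for $m_{WY}(\Sigma)$.

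First I would solve Jang's equation over $(\Omega,g,k)$ with boundary data on $\Sigma$ dictated by the optimal $\tau$, producing the Jang graph $(\bar\Omega,\bar g)$. The dominant energy condition forces $\bar R\geq 0$; the induced metric on $\Sigma\hookrightarrow\bar\Omega$ is $\hat\sigma=\sigma+d\tau^2$, which has positive Gauss curvature by the convexity condition \eqref{convexitycondition} and hence embeds isometrically in $\mathbb{R}^3$ with mean curvature $\hat H_0$; and the mean curvature $\bar H$ of $\Sigma$ in $(\bar\Omega,\bar g)$ is the generalized mean curvature in the frame $\{e_3',e_4'\}$ determined by the Jang solution, which admissibility makes positive so that $\Sigma$ is mean convex. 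Because $\Sigma_h$ is an apparent horizon on which the energy condition is strict, the Schoen--Yau blow-up analysis shows Jang's equation diverges there and $\Sigma_h$ opens into an asymptotically cylindrical end of $(\bar\Omega,\bar g)$ whose limiting cross-section has area exactly $|\Sigma_h|$; no other compact minimal surface is produced, since none is present in $\Omega$.

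Next I would identify a Riemannian Brown--York mass on the Jang graph as a lower bound for the Wang--Yau energy. Since the optimal gauge $\{\bar e_3,\bar e_4\}$ minimizes the surface Hamiltonian \eqref{genmean}, one has $\mathfrak{H}\leq\bar H$, and therefore, from \eqref{wangyauenergy}, $E_{WY}(\Sigma,\iota,\tau)=\frac{1}{8\pi}\int_\Sigma(\mathfrak{H}_0-\mathfrak{H})\,dA_\sigma\geq\frac{1}{8\pi}\int_\Sigma(\mathfrak{H}_0-\bar H)\,dA_\sigma$, the right-hand side being exactly the Brown--York mass of $\Sigma$ in $(\bar\Omega,\bar g)$ relative to the $\mathbb{R}^3$ reference for $\hat\sigma$ (the factors $\sqrt{1+|\nabla\tau|^2}$ in $\mathfrak{H}_0$ and in $dA_{\hat\sigma}$ absorbing one another). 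Thus $(\bar\Omega,\bar g)$ is a Riemannian manifold with $\bar R\geq0$, a single asymptotically cylindrical inner end of cross-sectional area $|\Sigma_h|$, and a mean-convex outer boundary $\Sigma$ whose induced metric has positive Gauss curvature --- precisely the data of Theorem \ref{thm2.7}. Feeding this into the capacity argument there, with $\gamma$ the resulting Dirichlet-energy constant, yields $m_{WY}(\Sigma)=E_{WY}(\Sigma,\iota,\tau)\geq\frac{\gamma}{1+\gamma}\sqrt{|\Sigma_h|/4\pi}$, which is \eqref{WY1in}.

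Finally, \eqref{WY2in} follows from \eqref{WY1in} together with the area--angular momentum--charge inequality for stable axisymmetric apparent horizons, which under the enhanced condition $\mu_{EM}\geq|J_{EM}|_g$ gives $|\Sigma_h^i|\geq 4\pi\sqrt{(Q_h^i)^4+4(\mathcal{J}_h^i)^2}$ for each component; summing over $i$, using $|\Sigma_h|=\sum_i|\Sigma_h^i|$, and taking square roots converts the horizon-area bound into the stated charge--angular momentum bound, while dropping the angular momentum term removes the need for axisymmetry. The main obstacle is the reduction of the previous step: one must justify the Jang blow-up at the apparent horizon so that the cylindrical cross-section carries area exactly $|\Sigma_h|$ and the energy inequality survives the non-smoothness of the graph there, and then verify that the capacity estimate of Theorem \ref{thm2.7}, stated for a compact $\Omega$, transfers to the non-compact Jang graph with its cylindrical end. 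Controlling the geometry near the blow-up set, rather than any of the subsequent algebra, is where the real work lies.
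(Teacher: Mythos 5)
Your overall route -- solve Jang's equation with Dirichlet data $\tau$, pass to the Jang graph, reduce to the machinery behind Theorem \ref{thm2.7}, and finish with the area--angular momentum--charge inequality -- is exactly what the paper does (its proof of Theorem \ref{thm2.11} is literally ``combine the Wang--Yau proof with the proofs of Theorems \ref{thm2.7} and \ref{thm2.9}''). However, there is a genuine error at the pivotal step. You assert that the dominant energy condition forces $\bar R\geq 0$ on the Jang graph, so that $(\bar\Omega,\bar g)$ becomes a nonnegative-scalar-curvature Riemannian manifold to which a Riemannian capacity/Penrose argument applies directly. This is false: the Jang scalar curvature identity \eqref{Jangscalar} only gives $\bar R\geq 2|X|_{\bar g}^2-2\operatorname{div}_{\bar g}X$, which is nonnegative only modulo a divergence term. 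Handling that term is precisely where the substance of the paper lies -- the corner-smoothing of Lemma \ref{gluing} is designed to preserve the inequality $\bar{\mathbf{R}}_\delta-2|\mathbf{X}_\delta|^2+2\operatorname{div}\mathbf{X}_\delta\geq\cdots$ in a weak sense, and the conformal factor of Proposition \ref{solution} is constructed by exploiting the divergence structure \eqref{divergencestructure} so that the boundary term $X(\bar\nu)$ cancels against the mean-curvature jump. Treating $\bar R$ as pointwise nonnegative skips all of this and the argument as written would not close.

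A related, smaller slip: the gauge-minimization \eqref{genmean} does not give $\mathfrak{H}\leq\bar H$; it gives $\int_\Sigma\mathfrak{H}\,dA_\sigma\leq\int_\Sigma\mathfrak{H}(e_3',e_4')\,dA_\sigma$, and the Jang-frame quantity satisfies $\mathfrak{H}(e_3',e_4')/\sqrt{1+|\nabla\tau|^2}=\bar H-X(\bar\nu)$, not $\bar H$. The Bartnik--Shi--Tam extension must therefore be glued with boundary mean curvature $H_+=\bar H-X(\bar\nu)$ (which is what admissibility makes positive), and it is the quantity $\tfrac{1}{8\pi}\int_{\hat\Sigma}\bigl(\hat H_0-(\bar H-X(\bar\nu))\bigr)dA_{\hat\sigma}$ -- not a Brown--York mass computed with $\bar H$ -- that bounds $E_{WY}(\Sigma,\iota,\tau)$ from below. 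Once these two points are corrected, the rest of your outline (blow-up of the Jang solution at $\Sigma_h$ producing a cylindrical end of limiting cross-sectional area $|\Sigma_h|$, the capacity constant $\gamma$ from the Dirichlet energy of the conformal factor, and the reduction of \eqref{WY2in} to \eqref{WY1in} via $|\Sigma_h^i|\geq4\pi\sqrt{(Q_h^i)^4+4(\mathcal{J}_h^i)^2}$) matches the paper's intended argument.
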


The constant $\gamma$ here, as well as that of $\lambda$ in the next result, are defined analogously with those of Theorems \ref{thm2.7} and \ref{thm2.9}. Furthermore, as with the Liu-Yau energy bounds this
can be improved in the case when the horizon has a single component to obtain a localized Penrose-like inequality in the spirit of \eqref{c}.

\begin{theorem}\label{thm2.12}
Let $(\Sigma,\sigma,|\vec{H}|,\alpha)$ be a Wang-Yau data set arising from a spacetime $N^{3,1}$, with optimal isometric embedding $(\iota,\tau)$. Assume that $\Sigma$ and $\tau$ are admissible, and let $(\Omega,g,k,E,B)\hookrightarrow N^{3,1}$ be the bounding spacelike hypersurface. Suppose that
$(\Omega,g,k,E,B)$ is axisymmetric, satisfies \eqref{frobenius} and $J(\eta)=0$ along with the energy condition $\mu_{EM}\geq |J|_g$ which is strict on horizons, is compact with axisymmetric boundary consisting of a disjoint union $\partial\Omega=\Sigma_h \cup\Sigma$ where $\Sigma_h$ is a single component (nonempty) apparent horizon, and no other closed apparent horizons are present,  then there exists a nonzero positive constant $\lambda$ independent of horizon area such that
\begin{equation}\label{WY3in}
m_{WY}(\Sigma)^2\geq \left(\frac{\gamma}{1+\gamma}\sqrt{\frac{|\Sigma_{h}|}{4\pi}}
+\lambda \sqrt{\frac{\pi}{|\Sigma_h|}}Q^2\right)^2
+\frac{\lambda\gamma}{1+\gamma}\frac{8\pi^2\mathcal{J}^2}{\mathcal{C}^2}.
\end{equation}
Moreover, the same inequality holds without the assumption of axisymmetry if the contribution from angular momentum is removed.
\end{theorem}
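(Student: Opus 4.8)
The plan is to deduce \eqref{WY3in} from the Liu--Yau estimate of Theorem \ref{thm2.9} by passing to a Jang graph, exploiting the fact that both the Liu--Yau and the Wang--Yau positivity theorems are established by reducing to the Riemannian Shi--Tam setting through Jang's equation \cite{SchoenYauII}; the two arguments should differ only in the boundary condition imposed on the Jang graph. For the Liu--Yau energy this boundary condition corresponds to $\tau=0$, whereas for the Wang--Yau mass it is dictated by the time function $\tau$ of the optimal isometric embedding $(\iota,\tau)$. Thus once the Wang--Yau mass is identified with a Brown--York mass on the appropriate Jang graph, every subsequent step of Theorem \ref{thm2.9}---the cylindrical end at the horizon, the exterior gluing, the inverse mean curvature flow, and the resulting constants $\gamma$ and $\lambda$---would apply without change.

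Concretely, using the new proof of Wang--Yau positivity I would solve Jang's equation over $(\Omega,g,k)$ with boundary data at $\Sigma$ determined by $\tau$ and with blow-up at the apparent horizon $\Sigma_h$. This yields a Riemannian manifold $(\tilde\Omega,\tilde g)$, $\tilde g=g+df^2$, carrying an asymptotically cylindrical end in place of $\Sigma_h$ whose limiting cross-section has area $|\Sigma_\infty|=|\Sigma_h|$, together with an outer boundary $\tilde\Sigma$ of induced metric $\hat\sigma=\sigma+d\tau^2$. The Nirenberg/Pogorelov embedding of $\hat\sigma$, guaranteed by the convexity condition \eqref{convexitycondition}, furnishes the reference surface with mean curvature $\hat H_0$, and the generalized--mean--curvature matching underlying positivity should read, at the optimal embedding, as an identity of quasi-local masses
\begin{equation}
m_{WY}(\Sigma)=\frac{1}{8\pi}\int_{\tilde\Sigma}\left(\hat H_0-\tilde H\right)dA_{\hat\sigma}=m_{BY}(\tilde\Sigma;\tilde\Omega),
\end{equation}
with $\tilde H$ the mean curvature of $\tilde\Sigma\hookrightarrow\tilde\Omega$. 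The enhanced energy condition $\mu_{EM}\geq|J|_g$ should pass through Jang's equation to the scalar curvature bound $R_{\tilde g}\geq 2(|\tilde E|_{\tilde g}^2+|\tilde B|_{\tilde g}^2)$, and smoothing the resulting corner so as to carry out a conformal change to nonnegative scalar curvature is exactly what will make this Riemannian reduction rigorous without spinors.

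With this identification in hand, $(\tilde\Omega,\tilde g,\tilde E,\tilde B)$ is precisely the Riemannian configuration to which the proof of Theorem \ref{thm2.9} applies, and $\gamma,\lambda$ are the capacity and inverse-mean-curvature-flow constants of $(\tilde\Omega,\tilde g)$, defined as in Theorems \ref{thm2.7} and \ref{thm2.9}. It then remains only to transport the charge and angular momentum. The total charge $Q$ is preserved, since the divergence-free fields and their horizon flux are carried unchanged to the cylindrical cross-section. For angular momentum I would use that under $J(\eta)=0$ and \eqref{frobenius} the Jang solution $f$ is axisymmetric, so that $\eta$ remains Killing for $\tilde g$; because $\eta(f)=0$ one has $|\eta|_{\tilde g}=|\eta|_g$ along orbits, whence the orbit circumference $\mathcal C$ and the angular momentum $\mathcal J$ are unchanged by the deformation. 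Feeding the matched data $(Q,\mathcal J,|\Sigma_h|,\mathcal C)$ into the argument of Theorem \ref{thm2.9} should then produce \eqref{WY3in}, while removing $\eta$ eliminates every axisymmetric hypothesis and leaves the pure charge statement.

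The hard part will be the energy identity $m_{WY}(\Sigma)=m_{BY}(\tilde\Sigma)$ for a \emph{general} admissible $\tau$, rather than the special case $\tau=0$ collapsing to Liu--Yau. This requires that Jang's equation be solvable with the $\tau$-prescribed boundary condition and horizon blow-up, that the boundary geometry reproduce $(\hat\sigma,\hat H_0,\tilde H)$, and---most delicately---that the conformal factor of the smoothing construction be controlled near $\tilde\Sigma$ so as to perturb neither the boundary mean curvature nor the Brown--York mass. A secondary difficulty is confirming that axisymmetry genuinely survives both the Jang deformation and the conformal change, so that $\mathcal C$ and $\mathcal J$ are preserved and the angular-momentum term in \eqref{WY3in} is legitimate; this is precisely where \eqref{frobenius} and $J(\eta)=0$ are needed.
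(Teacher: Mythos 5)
Your proposal follows essentially the same route as the paper, whose proof of this theorem is precisely a combination of the Wang--Yau positivity argument (Jang's equation with Dirichlet data $\tau$, Bartnik--Shi--Tam extension with boundary mean curvature $\mathfrak{H}(e_3',e_4')/\sqrt{1+|\nabla\tau|^2}=\bar{H}-X(\bar{\nu})$, the corner smoothing of Lemma \ref{gluing}) with the conformal-factor and inverse mean curvature flow machinery of Sections \ref{sec6} and \ref{sec7}, which is already written for general boundary data $\tau$. One correction: the step you single out as ``the hard part,'' namely the identity $m_{WY}(\Sigma)=\frac{1}{8\pi}\int_{\tilde{\Sigma}}(\hat{H}_0-\tilde{H})\,dA_{\hat{\sigma}}$, is false as an equality for general admissible $\tau$, because the canonical gauge $\{\bar{e}_3,\bar{e}_4\}$ of \eqref{con1} entering $E_{WY}$ need not coincide with the Jang gauge $\{e_3',e_4'\}$; what holds, and what is all the argument requires, is the inequality $E_{WY}(\Sigma,\iota,\tau)\geq \frac{1}{8\pi}\int_{\hat{\Sigma}}\bigl(\hat{H}_0-\mathfrak{H}(e_3',e_4')/\sqrt{1+|\nabla\tau|^2}\bigr)dA_{\hat{\sigma}}$, which is an immediate consequence of the gauge-minimizing property \eqref{genmean} and is recorded in Section \ref{sec3}. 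Likewise, your concern that the conformal factor must not perturb the boundary mean curvature or the Brown--York mass is misplaced: the paper never requires this, since the conformal change is accounted for exactly through the relation $\bar{\mathbf{m}}=\tilde{\mathbf{m}}_{\delta,T}+\tfrac{1}{\pi}P(v_{\delta,T})$ of \eqref{PAjj}--\eqref{PA}, with both terms bounded below separately (Proposition \ref{lemma5.5} and the IMCF estimates \eqref{102938}--\eqref{ji}). With these two points corrected the remainder of your outline---preservation of $Q$, $\mathcal{J}$, $|\Sigma_h|$, and $\mathcal{C}$ under the axisymmetric Jang deformation via \eqref{frobenius} and $J(\eta)=0$---matches the paper's treatment.
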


\section{Review of the Proofs of Shi-Tam, Liu-Yau, and Wang-Yau}
\label{sec3} \setcounter{equation}{0}
\setcounter{section}{3}

Various elements of the proofs of positivity for the three quasi-local masses will be utilized to establish the results stated in the previous section. Here
we outline the proofs and discuss the primary tools that will be employed later.\medskip

\noindent\textbf{The Shi-Tam Proof \cite{ShiTam2002}.} Let $(\Omega, g)$ be a Riemannian manifold with nonnegative scalar curvature and single component boundary $\partial\Omega=\Sigma$ of positive Gauss $K$ and mean curvature $H$. The arguments below easily work for a boundary with several components, however for simplicity of discussion we restrict to a single component. By the Weyl embedding theorem $\Sigma$ isometrically embeds uniquely in $\mathbb{R}^3$. The image will be labeled $\Sigma_0$ with Gauss curvature $K_0=K$ and mean curvature $H_0>0$. Consider the unit normal flow
$\{\Sigma_r\}_{r\in[0,\infty)}$ emanating outward from $\Sigma_0$, so that the
Euclidean metric on the exterior region $M_+=[0,\infty)\times \Sigma_{0}$ is given by $dr^2+\sigma_r$, where $\sigma_r$ is the induced metric on the leaf $\Sigma_r$.
Construct now a new (asymptotically flat) metric
\begin{equation}
g_+=u^2dr^2+\sigma_r
\end{equation}
on $M_+$, in which the function $u:M_+\to\mathbb{R}_+$ satisfies the parabolic initial value problem
\begin{equation}\label{parabolic1}
\begin{cases}
H_r\frac{\partial u}{\partial r}=u^2\Delta_r u+K_{r}\left(u-u^3\right)& \text{on } \text{ }\text{ }M_+\\
u(0,x)=u_{0}(x)
\end{cases}.
\end{equation}
Here $\Delta_{r}$, $K_{r}$, and $H_r$ are the Laplacian, Gauss, and mean curvatures with respect to $\sigma_r$. This equation guarantees that the scalar curvature of $g_+$ vanishes $R_{g_+}=0$, and if $u_0=H_0/H$ then the mean curvature of $\partial M_+$ is $H$. Furthermore, since the Gauss curvatures $K_r$ are positive the function of $r$ given by
\begin{equation}\label{MS2}
\frac{1}{8\pi}\int_{\Sigma_r}H_r\left(1-u^{-1}\right)dA_{\sigma_r}
\end{equation}
is nonincreasing. It also converges to the ADM mass $\mathbf{m}$ of $g_+$ as $r\rightarrow\infty$ and agrees with the Brown-York mass of $\Sigma$ at $r=0$, hence
\begin{equation}
m_{BY}(\Sigma)\geq \mathbf{m}.
\end{equation}
It remains to show that the ADM mass is nonnegative. This, however, follows
due to the fact that there is a nonnegative scalar curvature fill-in for $(M_+,g_+)$, namely $(\Omega,g)$. More precisely, the positive mass theorem holds \cite{Miao,ShiTam2002} for the composite manifold $(\Omega\cup M_+, g\cup g_+)$ as both sides are of nonnegative scalar curvature, and the induced metrics as well as the mean curvatures agree along the gluing surface $\Sigma$.
\medskip

\noindent\textbf{The Liu-Yau Proof \cite{LiuYau0,LiuYau}.}
Consider a compact initial data set $(\Omega,g,k)$. Even when this satisfies the dominant energy condition $\mu\geq|J|_g$, it may not have nonnegative scalar curvature and this is an impediment to applying the techniques of Shi-Tam discussed above. Thus, the idea is to deform to nonnegative scalar curvature while preserving the induced metric on the boundary. This may be achieved, as in the case of the spacetime positive mass theorem \cite{SchoenYauII}, through a two step procedure. The first step is the Jang deformation $g\rightarrow \bar{g}=g+df^2$ where $f:\Omega\rightarrow\mathbb{R}$ is a solution of the Jang equation
\begin{equation}\label{Jang}
\begin{cases}
\left(g^{ij}-\frac{f^if^{j}}{1+|\nabla f|^2_g}\right)\left(\frac{\nabla_{ij}f}{\sqrt{1+|\nabla f|^2_g}}-k_{ij}\right)=0 & \text{in $\Omega$}\\
f=\tau & \text{on $\partial\Omega$}
\end{cases},
\end{equation}
with $f^i=g^{ij}f_j$, the covariant derivative $\nabla$ is with respect to $g$, and $\tau=0$. The boundary condition guarantees that the induced metrics from $g$ and the Jang metric $\bar{g}$ agree. Moreover the equation implies that the scalar curvature of the Jang metric is weakly nonnegative, that is
\begin{equation}\label{Jangscalar}
 \bar{R}=2\left(\mu-J(w)\right)+|h-k|^2_{\bar{g}}
 +2|X|_{\bar{g}}^2-2\text{div}_{\bar{g}}X\geq 2|X|_{\bar{g}}-2\text{div}_{\bar{g}}X,
\end{equation}
where $h$ is second fundamental form of the graph $t=f(x)$ in the product manifold $(\Omega\times\mathbb{R},g+dt^2)$, and $w$, $X$ are 1-forms given by
\begin{equation}\label{def-h-w-X}
 h_{ij}=\frac{\nabla_{ij}f}{\sqrt{1+|\nabla f|^2_g}},\qquad w_i=\frac{f_i}{\sqrt{1+|\nabla f|^2_g}},\qquad X_i=\frac{f^j}{\sqrt{1+|\nabla f|^2_g}}\left(h_{ij}-k_{ij}\right).
\end{equation}
The Jang scalar curvature has a sufficient nonnegativity property that it allows for a solution $u>0$ of the zero scalar curvature equation
\begin{equation}\label{zeroscalarcurvature}
\begin{cases}
\Delta_{\bar{g}}u-\frac{1}{8}\bar{R}u=0 & \text{in $\Omega$}\\
u=1 & \text{on $\partial\Omega$}
\end{cases}.
\end{equation}
The conformal metric $\tilde{g}=u^4 \bar{g}$ then has zero scalar curvature and its induced boundary metric agrees with that of $g$. The mean curvature of the boundary $\tilde{H}=\bar{H}+4\partial_{\bar{\nu}} u$ with respect to the conformal metric may not be positive but it does satisfy
\begin{equation}\label{h1}
\int_{\partial\Omega}\tilde{H} dA_{\sigma}\geq \int_{\partial\Omega}\left(\bar{H}-X(\bar{\nu})\right)dA_{\sigma},
\end{equation}
and a computation \cite[Section 5]{Yau} shows that
\begin{equation}\label{h2}
\bar{H}-X(\bar{\nu})\geq |\vec{H}|>0,
\end{equation}
where $\bar{H}$ and $\bar{\nu}$ are the mean curvature and unit normal of the boundary with respect to $\bar{g}$. Thus, assuming that the boundary Gauss curvature is positive, one may construct an asymptotically flat outer manifold $(M_+,\tilde{g}_+)$ of zero scalar curvature following the Shi-Tam approach with mean curvature at $\partial M_+$ given by $\tilde{H}_+ = \bar{H}-X(\bar{\nu})$. Although the mean curvature $\tilde{H}$ of the inner manifold $(\Omega,\tilde{g})$ does not necessarily agree with that of the outer manifold, it is shown nevertheless that the ADM mass $\tilde{\mathbf{m}}$ of the composite $(\Omega\cup M_+, \tilde{g}\cup \tilde{g}_+)$ is nonnegative using Witten's spinor proof. Therefore, as in the Shi-Tam proof
\begin{equation}\label{LYadm}
m_{LY}(\Sigma)\geq \frac{1}{8\pi}\int_{\Sigma}\left(H-\left(\bar{H}-X(\bar{\nu})\right)\right)dA_{\sigma}
\geq \tilde{\mathbf{m}}\geq 0.
\end{equation}

\noindent\textbf{The Wang-Yau Proof \cite{WangYauPMT,WangYauPMT1}.}
The Wang-Yau proof is an extension of the Liu-Yau method to the case when $\tau$ is nontrivial, although it forgoes the second step of conformal deformation.
Let $(\Sigma,\sigma,|\vec{H}|,\alpha)$ be a Wang-Yau data set. If the time function $\tau$ is admissible with $\Sigma$ then there is a compact spacelike hypersurface
$(\Omega,g,k)\hookrightarrow N^{3,1}$ such that $\partial\Omega=\Sigma$. Admissibility also guarantees that the projection $(\hat{\Sigma},\hat{\sigma})$ to $\mathbb{R}^3$, of the isometric embedding $\iota:\Sigma\hookrightarrow\mathbb{R}^{3,1}$, has positive Gauss curvature $K_{\hat{\sigma}}>0$, and in addition that the generalized mean curvature is positive
\begin{equation}
\mathfrak{H}(e_3',e_4')>0,\quad\quad e_3'=\cosh\psi \nu +\sinh\psi n,\quad\quad
\sinh\psi=\frac{\nu(f)}{\sqrt{1+|\nabla f|_g^2}},
\end{equation}
where $f$ is a solution of the Jang-Dirichlet problem \eqref{Jang}. Using the projection to Euclidean 3-space one may then construct an asymptotically flat zero scalar curvature outer manifold $(M_+,\bar{g}_+)$ via the Shi-Tam procedure, with the mean curvature of $\partial M_+$ given by $(1+|\nabla\tau|^2)^{-\frac{1}{2}}\mathfrak{H}(e_3',e_4')$. A computation shows that this agrees with the mean curvature quantity from the Jang equation, namely
\begin{equation}
\frac{\mathfrak{H}(e_3',e_4')}{\sqrt{1+|\nabla\tau|^2}}=\bar{H}-X(\bar{\nu}).
\end{equation}
This observation leads to nonnegativity of the ADM mass $\bar{\mathbf{m}}$ for the glued manifold $(\Omega\cup M_+,\bar{g}\cup \bar{g}_+)$, again by following Witten's spinor argument. It then follows from the Shi-Tam monotonicity that
\begin{equation}
E_{WY}(\Sigma,\iota,\tau)\geq \frac{1}{8\pi}\int_{\hat{\Sigma}}\left(\hat{H}_0
-\frac{\mathfrak{H}(e_3',e_4')}{\sqrt{1+|\nabla\tau|^2}}\right)dA_{\hat{\sigma}}\geq\bar{\mathbf{m}}\geq 0,
\end{equation}
where in the first inequality on the left \eqref{genmean} was used. It immediately follows that the mass, as the infimum of energy, is also nonnegative.

\section{Brown-York Mass, Angular Momentum, and Charge Inequalities}
\label{sec4}\setcounter{equation}{0}
\setcounter{section}{4}

The four definitions, discussed in Section \ref{sec2}, of angular momentum of the 2-surface $\Sigma$ all agree in the axisymmetric regime, and correspond to the associated Komar integral
\begin{equation}
\mathcal{J}_{K}(\Sigma)=-\frac{1}{8\pi}\int_{\Sigma}\star_{N} d\eta.
\end{equation}
Most of the statements in the next lemma may be found in various places throughout the literature, with assorted hypotheses. We combine them here in one result (with minimal hypotheses) for convenience, and bring them into the setting of the current paper.

\begin{lemma}\label{sameAM} Let $\Sigma$ be an axially symmetric spacelike 2-surface in spacetime $N^{3,1}$, and denote the rotational Killing field by $\eta$.
\begin{enumerate}
\item If $\Sigma$ arises as the boundary of a spacelike hypersurface $(\Omega,g,k)\hookrightarrow N^{3,1}$, and $p(\nu)^T$ is invariant under the action of $\eta$, then $\mathcal{J}_{BY}(\Sigma)=\mathcal{J}_{LY}(\Sigma)=\mathcal{J}(\Sigma)$.
    If in addition the spacetime is axisymmetric, then these quantities agree with $\mathcal{J}_{K}(\Sigma)$.

	\item Let $\iota:\Sigma \hookrightarrow \mathbb{R}^{3,1}$ be an axisymmetric optimal isometric embedding with time function $\tau=-\langle T_0,\iota \rangle$. If $N^{3,1}$ is axisymmetric then $\mathcal{J}_{CWY}(\Sigma,\iota,\tau)=\mathcal{J}(\Sigma)=\mathcal{J}_{K}(\Sigma)$.
\end{enumerate}
\end{lemma}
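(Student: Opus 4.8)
The plan is to verify the four equalities one pair at a time, exploiting in every case that the rotational generator $\eta$ is tangent to $\Sigma$ and is divergence free on $(\Sigma,\sigma)$ (being Killing there). First I would treat $\mathcal{J}_{BY}=\mathcal{J}$ in part (1). Since $\eta$ is tangent to $\Sigma$ we have $g(\eta,\nu)=0$, so $p(\eta,\nu)=k(\eta,\nu)$; and differentiating $\langle\nu,n\rangle=0$ along $\eta$ together with the definition of the extrinsic curvature gives $\langle{}^N\nabla_\eta\nu,n\rangle=-k(\eta,\nu)$. Comparing the two surface integrals yields $\mathcal{J}_{BY}(\Sigma)=\mathcal{J}(\Sigma)$ at once.

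Next, for $\mathcal{J}_{LY}=\mathcal{J}_{BY}$, I would use the Hodge decomposition $p(\nu)^T=d\upsilon+\mathbf{j}$ with $\mathbf{j}=*_\sigma d\varpi$. Because the embedding of $\Sigma$ is axisymmetric the tangential projection satisfies $\eta^T=\eta$, and pairing the decomposition with $\eta$ gives $\mathbf{j}(\eta)=p(\nu)^T(\eta)-\eta(\upsilon)$. The exact piece integrates away: since $\eta$ is divergence free, $\int_\Sigma\eta(\upsilon)\,dA_\sigma=\int_\Sigma\operatorname{div}_\sigma(\upsilon\,\eta)\,dA_\sigma=0$, whence $\int_\Sigma\mathbf{j}(\eta)\,dA_\sigma=\int_\Sigma p(\eta,\nu)\,dA_\sigma$, i.e. $\mathcal{J}_{LY}=\mathcal{J}_{BY}$. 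To reach the Komar integral under full spacetime axisymmetry, I would expand $\star_N d\eta$ in an orthonormal frame $\{n,\nu,e_1,e_2\}$ adapted to $\Sigma$; the Killing identity $\nabla_a\eta_b=-\nabla_b\eta_a$ and the fact that $\eta$ is tangent to the slice collapse the Hodge dual on $\Sigma$ to the normal component $\langle{}^N\nabla_n\eta,\nu\rangle=k(\eta,\nu)$, so that after fixing orientation and normalization one recovers $\mathcal{J}_K(\Sigma)=\mathcal{J}(\Sigma)$.

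For part (2) I would first note that in $\mathbb{R}^{3,1}$ the rotational Killing field $\eta$ is orthogonal to the time translation $T_0$, so $\langle\eta,T_0\rangle\equiv 0$ and the first term of $\mathcal{J}_{CWY}$ drops out. In the axisymmetric setting $\tau$ and the scalar argument of $\sinh^{-1}$ are $\eta$-invariant, so the two gradient terms of $\mathfrak{j}$ pair with $\eta^T=\eta$ to give $\varrho\,\eta(\tau)=0$ and $\eta(\sinh^{-1}(\cdots))=0$ pointwise, leaving only $\tfrac{1}{8\pi}\int_\Sigma(\alpha_{\vec{H}_0}-\alpha_{\vec{H}})(\eta)\,dA_\sigma$. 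I would then relate the frame adapted to the mean curvature vector $\vec{H}$ to the slice frame $\{\nu,n\}$ by a boost in the normal plane; a boost changes the connection one-form only by the exact differential of the boost angle, which integrates to zero against $\eta$. Thus $\int_\Sigma\alpha_{\vec{H}}(\eta)\,dA_\sigma=\int_\Sigma\langle{}^N\nabla_\eta\nu,n\rangle\,dA_\sigma=-8\pi\,\mathcal{J}(\Sigma)$, while the reference term $\tfrac{1}{8\pi}\int_\Sigma\alpha_{\vec{H}_0}(\eta)\,dA_\sigma$ equals the Komar angular momentum of the closed surface $\iota(\Sigma)\subset\mathbb{R}^{3,1}$, which vanishes by flatness of Minkowski space. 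Combining with part (1) gives $\mathcal{J}_{CWY}=\mathcal{J}=\mathcal{J}_K$, and I would cite \cite[Proposition 6.1]{ChenWangYau15} for this chain.

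The equalities among $\mathcal{J}_{BY}$, $\mathcal{J}$, and $\mathcal{J}_{LY}$ are essentially bookkeeping, so the main obstacle is part (2): correctly tracking the boost relating the $\vec{H}$-adapted frame to $\{\nu,n\}$ (and the analogous relation for the reference surface), and verifying that the reference connection term contributes nothing. Making the signs, the orientation of $\star_N$, and the normalization constants line up consistently across all four definitions is the part that demands the most care.
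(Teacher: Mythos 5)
Your proposal is correct and follows essentially the same route as the paper's proof: the pointwise identity $p(\eta,\nu)=k(\eta,\nu)=-\langle{}^N\nabla_\eta\nu,n\rangle$ for $\mathcal{J}_{BY}=\mathcal{J}$, the Hodge decomposition for $\mathcal{J}_{LY}=\mathcal{J}_{BY}$, the adapted-frame computation of $\star_N d\eta$ for the Komar identification, and the term-by-term vanishing of $\mathfrak{j}(\eta)$ for part (2). The only cosmetic differences are that the paper disposes of the exact term via the pointwise identity $\eta(\upsilon)=0$ coming from the invariance hypothesis on $p(\nu)^T$, where you instead integrate by parts against the divergence-free $\eta$, and that the paper defers the details of part (2) to \cite[Proposition 6.1]{ChenWangYau15} rather than spelling out the boost and reference-term arguments as you do.
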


\begin{proof}
Since $\Sigma$ is axisymmetric, $\eta^T=\eta$. Thus on $\Sigma$ we have
\begin{equation}
p(\eta,\nu)=k(\eta,\nu)=\langle{}^N\nabla_{\eta}n,\nu\rangle
=-\langle n,{}^{N}\nabla_{\eta}\nu \rangle,
\end{equation}
and integrating produces
$\mathcal{J}_{BY}(\Sigma)=\mathcal{J}(\Sigma)$. Furthermore, the invariance of $p(\nu)^T$ under the action of $\eta$ implies that $\eta(\upsilon)=0$. Hence
\begin{equation}
\mathbf{j}(\eta^T)=\mathbf{j}(\eta)=p(\eta,\nu),
\end{equation}
and integrating produces $\mathcal{J}_{LY}(\Sigma)=\mathcal{J}_{BY}(\Sigma)$. In order to compute the Komar integrand let $\epsilon$ be the volume form of $N^{3,1}$ and set $\epsilon_{\sigma}=dA_{\sigma}$. Then in local coordinates on $\Sigma$ we have
\begin{equation}
\left(\star_{N} d\eta\right)_{ab}
=\epsilon_{abcd}{}^{N}\nabla^{[c} \eta^{d]}=\frac{1}{2}\left(\mathfrak{i}_{n}\mathfrak{i}_{\nu}\epsilon\right)_{ab}
\left({}^{N}\nabla^{\nu}\eta^n-{}^{N}\nabla^n \eta^{\nu}\right)
=\langle{}^{N}\nabla_{\nu}\eta,n\rangle \left(\epsilon_{\sigma}\right)_{ab},
\end{equation}
where $\mathfrak{i}$ denotes interior product. Again the desired conclusion follows by integrating over $\Sigma$, and this concludes the proof of (1).

The statement (2) is proven in \cite[Proposition 6.1]{ChenWangYau15}. To summarize, since the isometric embedding $\iota$ is axisymmetric, the time function $\tau$ is axisymmetric which implies that $\eta$ is a symmetry of the projection $\hat{\Sigma}\subset\mathbb{R}^3$. It follows that $\langle\eta,T_0\rangle=0$. Moreover the first three terms of $\mathfrak{j}(\eta)$ vanish immediately according to the hypotheses. Lastly $\alpha_{\vec{H}}(\eta)$ may be expressed as $\langle{}^{N}\nabla_{\eta}e_3,e_4\rangle$ for any oriented frame $\{e_3,e_4\}$ of the normal bundle, since $\eta$ is a Killing field. This shows that $\mathcal{J}_{CWY}(\Sigma,\iota,\tau)=\mathcal{J}(\Sigma)$.
\end{proof}

The goal of this section is to establish Theorems \ref{thmBYchargein}, \ref{thmBYAMin}, and \ref{combinedamch}. One ingredient of the proof consists of an inequality relating horizon area to charge and angular momentum. This class of inequalities is motivated in part by black hole thermodynamics, particularly the desire for a nonnegative black hole temperature, and have been proven in generality
for stable apparent horizons \cite{BrydenKhuri,Dain2012,ClementJaramilloReiris,ClementReirisSimon}. Here we extend this result to the setting of asymptotically cylindrical ends. Let us first recall the relevant definitions. An \textit{asymptotically cylindrical end} within an initial data set $(\Omega,g,k,E,B)$ is a subset diffeomorphic to $[1,\infty)\times S^2$ such that
\begin{equation}\label{mmm}
|{}^{g_0}\nabla^{\ell}(g-g_0)|_{g_0}+|{}^{g_0}\nabla^{\bar{\ell}}(k-k_0)|_{g_0}
+|E-E_0|_{g_0}+|B-B_0|_{g_0}=O\left(s^{-1}\right)\text{ }\text{ }\text{ as }\text{ }\text{ }s\rightarrow\infty
\end{equation}
for $\ell=0,1,2$ and $\bar{\ell}=0,1$, where
$g_0=\xi_0(ds^2+\sigma_0)$ in which $\xi_0>0$, $\sigma_0$ are a function and metric on $S^2$, and the background data are invariant under radial translations
\begin{equation}
\mathfrak{L}_{\partial_s}g_0=\mathfrak{L}_{\partial_s}k_0=
\mathfrak{L}_{\partial_s}E_0=\mathfrak{L}_{\partial_s}B_0=0.
\end{equation}
The presence of $\xi_0$ makes the background metric conformally cylindrical, and this generalized notion of asymptotically cylindrical ends is used to accommodate examples such as the extreme Kerr-Newman geometry.

Consider normal variations, inside $\Omega$, of a future apparent horizon $(\Sigma,\sigma)$ with speed $\varphi\in C^{\infty}(\Sigma)$. Then the variation of the future null expansion (see \cite{AnderssonMarsSimon}) is given by
\begin{equation}
D\theta_{+}[\varphi]=\mathcal{L}\varphi:=-\Delta_{\sigma}\varphi+2\langle V,\nabla \varphi\rangle
+(W+\operatorname{div}_{\sigma}V-|V|_{\sigma}^2)\varphi,
\end{equation}
where
\begin{equation}
W=K_{\sigma}-8\pi(\mu+J(\nu))-\frac{1}{2}|II|_{\sigma}^2,\quad\quad\quad V=p(\nu)^T,
\end{equation}
with $K_{\sigma}$ denoting Gauss curvature and $II$ representing the null second fundamental form associated with $l_+$. Although $\mathcal{L}$ is not necessarily self adjoint, the principal eigenvalue $\lambda_1$ is real and simple. The future apparent horizon $\Sigma$ is referred to as \textit{stable} if $\lambda_1\geq 0$. A similar statement holds for past apparent horizons. The limiting cross-section of an asymptotically cylindrical end will be referred to as stable if the principal eigenvalue of the limiting stability operator is nonnegative.

\begin{proposition}\label{prop1}
Let $(\Omega,g,k,E,B)$ be an initial data set for the Einstein-Maxwell equations
satisfying the charged dominant energy condition $\mu_{EM}\geq |J_{EM}|_g$.
\begin{enumerate}
\item If $\Sigma\subset\Omega$ is a single component axisymmetric stable apparent horizon, or

\item $\Sigma$ is the axisymmetric stable limiting cross-section of an asymptotically cylindrical end in $\Omega$,
\end{enumerate}
then
\begin{equation}\label{areaamcharge}
|\Sigma|\geq 4\pi\sqrt{Q^4+4\mathcal{J}^2},
\end{equation}
and equality is achieved if and only if $(\Sigma,\sigma,k(\nu)^T,E,B)$ arises from an extreme Kerr-Newman horizon. Furthermore, the same conclusion holds without the contribution from $\mathcal{J}$ if the assumption of axisymmetry is dropped, and the same conclusion holds without the contribution from $Q$ if the energy condition is changed to $\mu\geq|J|_g$.
\end{proposition}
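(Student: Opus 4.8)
The plan is to reduce both cases to a single \emph{stability inequality} on the two-surface and then to a variational problem on the topological sphere whose minimizer is the extreme Kerr--Newman horizon. Inequality \eqref{areaamcharge} for a stable axisymmetric apparent horizon, case (1), has already been established in \cite{BrydenKhuri,Dain2012,ClementJaramilloReiris,ClementReirisSimon}; for this case I would only check that the present hypotheses (single component, axisymmetric, stable, and the charged dominant energy condition $\mu_{EM}\geq|J_{EM}|_g$) coincide with the hypotheses there and invoke the result directly, including its rigidity statement. The genuinely new content is case (2), the limiting cross-section of an asymptotically cylindrical end, so the main effort should go into showing that the \emph{limiting stability operator} obeys the very same inequality, after which the identical variational argument delivers \eqref{areaamcharge}.

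For the common reduction I would first pass from $\lambda_1(\mathcal{L})\geq 0$ to a symmetrized integral inequality. Since $\mathcal{L}$ admits a positive principal eigenfunction $\phi$, writing $\phi=e^u$ turns the eigenvalue relation into a scalar identity; multiplying by an axisymmetric test function $\varphi^2$ and integrating, one notes that for axisymmetric $\varphi$ and $u$ the gradients $\nabla\varphi,\nabla u$ are $\sigma$-orthogonal to $\eta$, so the first-order term couples only to the non-rotational part of $V=p(\nu)^T$ and can be absorbed by completing the square, while the rotational part survives. This produces the inequality $\int_\Sigma(|\nabla\varphi|^2+K_\sigma\varphi^2)\,dA_\sigma\geq\int_\Sigma(8\pi(\mu+J(\nu))+\tfrac12|II|_\sigma^2+|\eta|_\sigma^{-2}V(\eta)^2)\varphi^2\,dA_\sigma$. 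Using $\mu_{EM}\geq|J_{EM}|_g$ to bound $\mu+J(\nu)$ below by the electromagnetic energy density, the divergence-free conditions on $E$ and $B$ to introduce electric and magnetic potentials, and the twist potential to encode $V(\eta)$, the quantities $Q$ and $\mathcal{J}$ emerge as conserved integrals. The inequality then reduces, exactly as in \cite{BrydenKhuri,Dain2012,ClementJaramilloReiris,ClementReirisSimon}, to the statement that $|\Sigma|$ is bounded below by the infimum of a Dirichlet-type energy functional of these potentials over axisymmetric spheres with prescribed $\mathcal{J}$ and $Q$; that infimum is attained by the extreme Kerr--Newman horizon and equals $4\pi\sqrt{Q^4+4\mathcal{J}^2}$.

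For case (2) I would realize the limiting cross-section as the $s\to\infty$ limit of the cross-sections $\Sigma_s=\{s\}\times S^2$ of the end. The decay \eqref{mmm} ensures that the induced data $(\sigma,k,E,B)$ on $\Sigma_s$ converge, after factoring out the conformal weight $\xi_0$ in $g_0=\xi_0(ds^2+\sigma_0)$, to the radially invariant background $(\sigma_0,k_0,E_0,B_0)$, and hence that the stability operators $\mathcal{L}_s$ converge to the limiting stability operator $\mathcal{L}_\infty$. The assumed nonnegativity of the principal eigenvalue of $\mathcal{L}_\infty$ supplies the symmetrized inequality above for the limiting configuration; since the background satisfies the constraints, the potentials are well defined on $\Sigma_\infty$ with $Q$ and $\mathcal{J}$ realized as limits of $Q_\infty^j$ and $\mathcal{J}_\infty^j$, and the variational argument on $(\Sigma_\infty,\sigma_0)$ reproduces \eqref{areaamcharge}. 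This limiting step is the principal obstacle: one must establish convergence of $\mathcal{L}_s$ and of the potentials strongly enough that no charge or angular momentum escapes in the limit, verify that $\mathcal{L}_\infty$ is precisely the operator whose stability is assumed, and track the conformal factor $\xi_0$ so that Gauss--Bonnet and the potential integrals are evaluated against the correct cross-sectional geometry.

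Finally, the two stated specializations follow by restriction. Without axisymmetry one repeats the argument with a general test function; the rotational term $|\eta|_\sigma^{-2}V(\eta)^2$ is absent and the bound becomes $|\Sigma|\geq 4\pi Q^2$, i.e.\ \eqref{areaamcharge} with $\mathcal{J}=0$. Replacing $\mu_{EM}\geq|J_{EM}|_g$ by $\mu\geq|J|_g$ discards the electromagnetic contributions and leaves $|\Sigma|\geq 8\pi|\mathcal{J}|$. The rigidity claim is read off from the chain of inequalities: equality forces saturation of the energy condition, vanishing of $|II|_\sigma^2$ and of the non-rotational part of $V$, and attainment of the functional's minimum, which together characterize the extreme Kerr--Newman horizon data $(\Sigma,\sigma,k(\nu)^T,E,B)$.
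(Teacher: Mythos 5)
Your proposal is correct and follows essentially the same route as the paper: case (1) is quoted from \cite{BrydenKhuri,Dain2012,ClementJaramilloReiris,ClementReirisSimon}, and case (2) is obtained by passing to the limit of the cross-sectional data along the cylindrical end so that the limiting stability inequality feeds into the same variational problem (the paper phrases it as the renormalized harmonic energy of a map to $\mathbb{H}^2_{\mathbb{C}}$, which is the Dirichlet energy of your potentials) minimized by the extreme Kerr--Newman data. The specializations and the rigidity discussion are likewise handled as you describe.
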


\begin{proof}
Part (1) is established in \cite{BrydenKhuri,Dain2012,ClementJaramilloReiris,ClementReirisSimon}. Part (2) follows from a straightforward generalization of the proof in (1). More precisely,
by taking limits the functions $k(\eta,\nu)$, $E(\nu)$, $B(\nu)$, as well as the induced metric $\sigma$ are well-defined on the limiting cross-section $\Sigma$. From these, a map $\Psi:S^2\rightarrow\mathbb{H}^2_{\mathbb{C}}$ may be constructed
with the complex hyperbolic plane as target. The stability assumption implies (see \cite[Section 3]{BrydenKhuri}) that
\begin{equation}\label{jjjj}
|\Sigma|\geq 4\pi e^{\frac{\mathcal{I}(\Psi)-c}{4\pi}}
\end{equation}
for a universal constant $c$, where $\mathcal{I}(\Psi)$ denotes the renormalized harmonic energy. The functional $\mathcal{I}(\Psi)$ is then minimized among all maps with fixed angular momentum and charge, and it is shown that the unique minimizer is the map $\Psi_{kn}$ arising from the extreme Kerr-Newman black hole with the given angular momentum and charge \cite[Section 4]{BrydenKhuri}. The desired result now follows from \eqref{jjjj} and a computation to determine the value of $\mathcal{I}(\Psi_{kn})$. The case of equality is treated in the same way as in \cite{BrydenKhuri}, and similar considerations hold for the situation when the assumption of axisymmetry is dropped and the contribution from $\mathcal{J}$ is removed from \eqref{areaamcharge}.
%By adding constants to the electromagnetic potential can get the integral of
%$\psi d\chi-\chi d\psi$ to be the same value as the integral in Kerr-Newman.
%The integral of $dv$ is obviously the same as in Kerr-Newman, and therefore
%the $\mathcal{J}$ that appears in the Kerr-Newman area formula (which is the
%Komar quantity $ma$) agrees with the integral of $k(\nu,\eta)$. So the
%formula for the area-am-charge inequality involves not the EM modified
%angular momentum, but the usual one with $k(\nu,\eta)$ alone.
\end{proof}

\begin{proof}[Proof of Theorem \ref{thmBYchargein}]
Consider the composite manifold $(\mathbf{M},\mathbf{g})=(\Omega\cup M_+, g\cup g_+)$ described in Section \ref{sec3}, in which the Bartnik-Shi-Tam extension is glued onto the given initial data. On each asymptotically cylindrical end remove the tail piece for $s>s_0$, where $s_0$ is a large parameter, and denote the result $(\mathbf{M}_{s_0},\mathbf{g})$. Let $\chi(s)$ be a smooth cut-off function with  $\chi=1$ for $s\leq s_0-2$ and $\chi=0$ for $s\geq s_0-1$, and consider the metric
$\mathbf{g}_{s_0}=\chi\mathbf{g}+(1-\chi)g_0$ where $g_0$ is the cylindrical model metric as in \eqref{mmm}. Then $(\mathbf{M}_{s_0},\mathbf{g}_{s_0})$ is exactly cylindrical near the parts of its boundary arising from the asymptotically cylindrical ends, $\partial_{cyl}\mathbf{M}_{s_0}$. Now double this manifold across its minimal surface boundary, and note that the result is smooth across $\partial_{cyl}\mathbf{M}_{s_0}$. Although the resulting doubled manifold is not smooth at $\Sigma=\partial\Omega$ and $\partial\mathbf{M}_{s_0}\setminus\partial_{cyl}\mathbf{M}_{s_0}$, the induced metrics and mean curvatures from both sides agree so that the scalar curvature is nonnegative in the distributional sense. This allows for a smoothing of the metric from \cite{Miao}, denoted
$\mathbf{g}_{s_0}^{\delta}$, followed by a conformal deformation $\tilde{\mathbf{g}}_{s_0}^{\delta}=(u_{s_0}^{\delta})^4 \mathbf{g}_{s_0}^{\delta}$ to nonnegative scalar curvature on the doubled manifold which preserves the asymptotically flat ends. We note that although $\mathbf{g}_{s_0}^{\delta}$ does not necessarily have nonnegative scalar curvature, the negative part of its scalar curvature can be made arbitrarily small in the $L^{3/2}$-norm for sufficiently large $s_0$ and sufficiently small $\delta$, and this is adequate for the existence of the conformal factor. Furthermore,
it can be shown from \cite{Jaracz,Miao} that
\begin{equation}\label{poiu}
u_{s_0}^{\delta}\rightarrow 1,\quad\quad
\tilde{\mathbf{m}}_{s_0}^{\delta}\rightarrow\mathbf{m},\quad\text{ as }\quad
(s_0,\delta)\rightarrow(\infty,0),
\end{equation}
where $\tilde{\mathbf{m}}_{s_0}^{\delta}$ and $\mathbf{m}$ are the ADM masses of $\tilde{\mathbf{g}}_{s_0}^{\delta}$ and $\mathbf{g}$.
Moreover this construction can be carried out so that there is a reflection symmetry across the doubling surfaces, and therefore these are minimal surfaces (in fact totally geodesic) with respect to $\tilde{\mathbf{g}}_{s_0}^{\delta}$. Observe that now $(\mathbf{M}_{s_0},\tilde{\mathbf{g}}_{s_0}^{\delta})$ is asymptotically flat with nonnegative scalar curvature and minimal surface boundary. Let $\tilde{\Sigma}_{h}(s_0,\delta)\subset \mathbf{M}_{s_0}$ denote the outermost minimal surface, then the Penrose inequality \cite{Bray} yields
\begin{equation}
\tilde{\mathbf{m}}_{s_0}^{\delta}\geq
\sqrt{\frac{|\tilde{\Sigma}_{h}(s_0,\delta)|_{\tilde{\mathbf{g}}_{s_0}^{\delta}}}{16\pi}}.
\end{equation}
In light of \eqref{poiu}, and the fact that $\mathbf{g}_{s_0}^{\delta}$ is $C^0$ close to $\mathbf{g}$, there are positive constants $c(s_0,\delta)\rightarrow 0$ as $(s_0,\delta)\rightarrow (\infty,0)$ such that
\begin{equation}
|\tilde{\Sigma}_{h}(s_0,\delta)|_{\tilde{\mathbf{g}}_{s_0}^{\delta}}
\geq\left(1-c(s_0,\delta)\right)|\tilde{\Sigma}_{h}(s_0,\delta)|_{\mathbf{g}}
\geq\left(1-c(s_0,\delta)\right)|\Sigma_{*}|_g.
\end{equation}
In the last inequality we have used the outerminimizing property of $\Sigma_*$ in $(\mathbf{M},\mathbf{g})$, which follows from the assumed outerminimizing nature of $\Sigma_*$ in $(\Omega,g)$ together with the positive mean curvature foliation of $M_+$. Since the masses converge we then have
\begin{equation}\label{qw}
m_{BY}(\Sigma)\geq\mathbf{m}\geq\sqrt{\frac{|\Sigma_*|_g}{16\pi}}.
\end{equation}
This may be considered as a generalization of the Penrose inequality with corners,
established in \cite{McCormickMiao,Miao 2009}, to the setting that includes asymptotically cylindrical ends. We may now apply Proposition \ref{prop1} to each component of $\Sigma_*$ to obtain the desired inequality \eqref{BY1in}.

Consider now the setting of \eqref{byq2}, where the generalized boundary $\Sigma_*$ has a single component. It will be assumed that an asymptotically cylindrical end is present, that is $\Sigma_*=\Sigma_{\infty}$, since the proof when $\Sigma_*=\Sigma_h$ is similar and has fewer steps. Observe that the outermost minimal surface $\Sigma_{h}(s_0)\subset(\mathbf{M}_{s_0},\mathbf{g}_{s_0})$ must lie far down the cylindrical end since there are no compact minimal surfaces in $\mathbf{M}$. By \cite[Lemma 4.1]{HuiskenIlmanen} the region outside this surface is diffeomorphic to $\mathbb{R}^3\setminus Ball$. In particular $\Omega$ is simply connected.

We now produce a weak inverse mean curvature flow (IMCF) emanating from the asymptotically cylindrical end. Let $\Sigma_{h}(s_i,\delta)$ denote the
outermost minimal surface in $(\mathbf{M}_{s_i},\mathbf{g}_{s_i}^{\delta})$ where $s_i\rightarrow \infty$,
and let $w_{s_i}^{\delta}$ be the locally Lipschitz level set function defining the IMCF starting at $\Sigma_{h}(s_i,\delta)$. Theorem 3.1 of \cite{HuiskenIlmanen} provides a uniform (independent of $s_i$) $C^1$ bound for $w_{s_i}^{\delta}$. Due to the area outerminimizing property of the level sets, and the fact that the smallest level is approximately $|\Sigma_*|$, it follows that on $\Omega$ the exponential of these functions remain bounded within the interval $(1-\delta,|\Sigma|/|\Sigma_*|+\delta)$ for $i$ sufficiently large. This sequence is then uniformly bounded and equicontinuous on compact subsets, and there is then a subsequence converging locally uniformly to $w^{\delta}$, which is a solution of the weak IMCF \cite[Theorem 2.1]{HuiskenIlmanen} on $(\mathbf{M},\mathbf{g}^{\delta})$.
Similar arguments yield subsequential convergence $w^{\delta}\rightarrow w$ on $\Omega$, where $w$ is a weak solution of IMCF. By subtracting a constant if necessary we may assume that $\inf_{\Omega}w=|\Sigma_*|$.

Let $\{S_t\}_{t=0}^{t_0}$ denote the leaves of the flow ($t$-level sets of $w$), with $S_{t_0}$ denoting the leaf with largest area in $\Omega$. Note that each leaf is connected since $\Omega$ is simply connected \cite[Lemma 4.2]{HuiskenIlmanen}. For any $t_*<t_0$ the leaf $S_{t_*}$ is outerminimizing in $\Omega$. Thus
\cite[Theorem 3.1]{ShiTamHorizons} implies that
\begin{equation}\label{888}
m_{BY}(\Sigma)\geq m_{H}(S_{t_*}):=\sqrt{\frac{|S_{t_*}|}{16\pi}}\left(1-
\frac{1}{16\pi}\int_{S_{t_*}}H^2 dA_t\right),
\end{equation}
where $m_{H}$ is Hawking mass. Furthermore, monotonicity of the Hawking mass produces
\begin{align}\label{8889}
\begin{split}
m_{H}(S_{t_*})-\sqrt{\frac{|\Sigma_*|}{16\pi}}\geq&
\frac{1}{(16\pi)^{3/2}}\int_{0}^{t_*}\int_{S_t}
\sqrt{|S_t|}R dA_t dt\\
\geq &\frac{2}{(16\pi)^{3/2}}\int_{0}^{t_*}\sqrt{|S_t|}\int_{S_t}|E|^2 dA_t dt\\
\geq& \frac{2}{(16\pi)^{3/2}}\int_{0}^{t_*}\frac{1}{\sqrt{|S_t|}}
\left(\int_{S_t}E\cdot\nu dA_t\right)^2 dt\\
=&\frac{2(4\pi)^{2}Q^2}{(16\pi)^{3/2}}\int_{0}^{t_*}
\frac{e^{-t/2}}{\sqrt{|\Sigma_*|}}dt\\
=&\sqrt{\frac{\pi}{|\Sigma_*|}}Q^2\left(1-e^{-t_{*}/2}\right),
\end{split}
\end{align}
where we have used $|S_t|=|\Sigma_*|e^t$ and are assuming without loss of generality that the magnetic charge vanishes so that $Q^2=Q_e^2$; this may be accomplished by performing a duality rotation of the electromagnetic field.
It now follows from Proposition \ref{prop1} that
\begin{equation}
m_{BY}(\Sigma)\geq\frac{|Q|}{2}+\sqrt{\frac{\pi}{|\Sigma_*|}}Q^2 -\sqrt{\frac{\pi}{|S_{t_*}|}}Q^2.
\end{equation}
The desired inequality \eqref{byq2} is achieved by letting $t_*\rightarrow t_0$.
\end{proof}

\begin{proof}[Proof of Theorem \ref{thmBYAMin}]
Inequality \eqref{BY2in} follows from \eqref{qw} and an application of Proposition \ref{prop1}. Consider now the setting of \eqref{byam2}, in which the generalized boundary $\Sigma_*$ has a single component. Here we may apply \eqref{888} and a modified version of \eqref{8889} to obtain the desired result. Namely, let $\eta$ be the rotational Killing field then the dominant energy condition and H\"{o}lder's inequality imply
\begin{align}
\begin{split}
m_{H}(S_{t_*})-\sqrt{\frac{|\Sigma_*|}{16\pi}}\geq&
\frac{1}{(16\pi)^{3/2}}\int_{0}^{t_*}\int_{S_t}
\sqrt{|S_t|}R dA_t dt\\
\geq &\frac{1}{(16\pi)^{3/2}}\int_{0}^{t_*}\sqrt{|S_t|}\int_{S_t}|k|^2 dA_t dt\\
\geq &\frac{1}{(16\pi)^{3/2}}\int_{0}^{t_*}\sqrt{|S_t|}\int_{S_t}|k(|\eta|^{-1}\eta,\nu)|^2 dA_t dt\\
\geq& \frac{1}{(16\pi)^{3/2}}\int_{0}^{t_*}\frac{\sqrt{|S_t|}}{\int_{S_t}|\eta|^2}
\left(\int_{S_t}k(\eta,\nu)dA_t\right)^2 dt\\
\geq&\frac{\sqrt{\pi}\mathcal{J}^2}
{\max_t \tfrac{1}{|S_t|}\int_{S_t}|\eta|^2}\int_{0}^{t_*}\frac{e^{-t/2}}{\sqrt{|\Sigma_*|}}dt\\
\geq&\frac{(2\pi)^2\mathcal{J}^2}{\mathcal{C}^2}\left(\sqrt{\frac{4\pi}{|\Sigma_*|}}
-\sqrt{\frac{4\pi}{|S_{t_*}|}}\right)
=\frac{(2\pi\alpha)^2}{\mathcal{C}^2}\sqrt{\frac{4\pi}{|\Sigma_*|}}\mathcal{J}^2,
\end{split}
\end{align}
where $\mathcal{C}=2\pi\max_{\Omega}|\eta|$ is the largest circumference of all $\eta$-orbits in $\Omega$. Proposition \ref{prop1} can now be used together with $t_*\rightarrow t_0$ to obtain inequality \eqref{byam2}.
\end{proof}

\begin{proof}[Proof of Theorem \ref{combinedamch}]
Inequality \eqref{combined1} follows from \eqref{qw} and an application of Proposition \ref{prop1}. Consider now the case when $\Sigma_*$ has a single component. By combining the proofs of Theorems \ref{thmBYchargein} and \ref{thmBYAMin} we obtain
\begin{equation}
m_{BY}(\Sigma)\geq\sqrt{\frac{|\Sigma_*|}{16\pi}}
+\alpha^2 \sqrt{\frac{\pi}{|\Sigma_*|}} Q^2
+\frac{(2\pi\alpha)^2}{\mathcal{C}^2}\sqrt{\frac{4\pi}{|\Sigma_*|}}\mathcal{J}^2.
\end{equation}
Therefore
\begin{align}
\begin{split}
m_{BY}(\Sigma)^2\geq & \left(\sqrt{\frac{|\Sigma_*|}{16\pi}}
+\alpha^2 \sqrt{\frac{\pi}{|\Sigma_*|}} Q^2\right) m_{BY}(\Sigma)\\
\geq&\left(\sqrt{\frac{|\Sigma_*|}{16\pi}}
+\alpha^2 \sqrt{\frac{\pi}{|\Sigma_*|}} Q^2\right)^2
+\sqrt{\frac{|\Sigma_*|}{16\pi}}\frac{(2\pi\alpha)^2}{\mathcal{C}^2}
\sqrt{\frac{4\pi}{|\Sigma_*|}}\mathcal{J}^2\\
=&\left(\sqrt{\frac{|\Sigma_*|}{16\pi}}
+\alpha^2 \sqrt{\frac{\pi}{|\Sigma_*|}} Q^2\right)^2
+\frac{\alpha^2}{2}\frac{\mathcal{J}^2}{\mathcal{R}^2_c},
\end{split}
\end{align}
where $\mathcal{R}_c=\mathcal{C}/2\pi$ is the circumference radius. The desired inequality \eqref{combined2} now follows.
\end{proof}

\section{Gluing for the Jang Surface and Wang-Yau Mass Positivity in Higher Dimensions}\label{sec5}\setcounter{equation}{0}
\setcounter{section}{5}

Let $(\Omega,g,k,E)$ be a compact spacelike hypersurface with induced electric field. The magnetic field will be ignored here since a duality rotation may always be applied to ensure that the total charge $Q$ agrees with the electric charge $Q_e$ up to sign. If the boundary $\Sigma=\partial\Omega$ is untrapped so that $\theta_{\pm}(\Sigma)>0$, local barriers can be constructed to obtain boundary gradient estimates for the boundary value problem \eqref{Jang}, and this leads to a solution $f$ for any given regular Dirichlet data $\tau$. The Jang deformation of the initial data set is then defined to be $(\bar{\Omega},\bar{g},\bar{E})$ where
\begin{equation}
\bar{g}_{ij}=g_{ij}+f_i f_j,\quad\quad\quad\quad
\bar{E}_i:=\frac{E_i +f_i f^j E_{j}}{\sqrt{1+|\nabla f|^2_g}}.
\end{equation}
Note that if the Jang graph blows-up at apparent horizons then the topology of $\bar{\Omega}$ may differ from that of $\Omega$.
It is shown in \cite{DisconziKhuri} that if $E$ is divergence free then so is $\bar{E}$ and an inequality holds between the energy densities
\begin{equation}\label{barEproperties}
|E|_g\geq |\bar{E}|_{\bar{g}},\quad\quad\quad \text{div}_{\bar{g}}\bar{E}=0.
\end{equation}
Moreover if regular level sets of $f$ are homologous to $\Sigma$ then the total charge of the deformation is equal to that of the original
\begin{equation}\label{kjkl}
\bar{Q}=Q.
\end{equation}
To see this observe that on a level set having unit normals $\nu$, $\bar{\nu}$ with respect to $g$, $\bar{g}$ we have
\begin{equation}
\bar{\nu}^i=\nu^i \sqrt{1+|\nabla f|_g^2}-\frac{f^i \nu(f)}{\sqrt{1+|\nabla f|_g^2}}\quad\quad \Rightarrow \quad\quad \bar{E}_i \bar{\nu}^i =E_i \nu^i,
\end{equation}
so that \eqref{kjkl} follows from the divergence theorem. In particular, this equality of total charges occurs when the Jang surface blows-up at the outermost apparent horizon. Furthermore the scalar curvature formula
\eqref{Jangscalar}, the energy density relation of \eqref{barEproperties}, and the energy condition $\mu_{EM}\geq |J|_{g}$ imply
\begin{equation}\label{inequality11}
\bar{R}-2|X|_{\bar{g}}^2 +2\text{div}_{\bar{g}}X
=2(\mu-J(w))+|h-k|_{\bar{g}}^2\geq 2|\bar{E}|_{\bar{g}}^2+|\bar{k}|_{\bar{g}}^2,
\end{equation}
where $\bar{k}=h-k$. If only the dominant energy condition holds $\mu\geq |J|_g$, then the same inequality \eqref{inequality11} holds without the electric field contribution on the right-hand side.

Let $(\bar{M}_+,\bar{g}_+,\bar{k}_+=0, X_+=0,\bar{E}_+=0)$ be a Bartnik-Shi-Tam extension of the Jang initial data, with zero scalar curvature and boundary $\partial \bar{M}_+=\Sigma$ having mean curvature
\begin{equation}
H_+=\bar{H}-X(\bar{\nu}),
\end{equation}
where $\bar{H}$ is mean curvature of $\Sigma$ with respect to $\bar{g}$ and the unit normal $\bar{\nu}$ is pointing out of $\bar{\Omega}$. By definition, if $\tau$ is admissible with $\Sigma$ then $H_+$ is positive so that the existence of the extension is not inhibited; if $\tau=0$ then positivity of $H_+$ follows from the untrapped condition \cite{Yau}. We may now construct composite asymptotically flat data
\begin{equation}
(\bar{\mathbf{M}},\bar{\mathbf{g}},\bar{\mathbf{k}},\mathbf{X},\bar{\mathbf{E}})
=(\bar{\Omega}\cup \bar{M}_+, \bar{g}\cup \bar{g}_+, \bar{k}\cup \bar{k}_+, X\cup X_+,\bar{E}\cup \bar{E}_+),
\end{equation}
which exhibits a corner at the hypersurface $\Sigma$. A key observation is that this corner may be smoothed while preserving the stability type inequality \eqref{inequality11} in a weak sense.

\begin{lemma}\label{gluing}
There exists a smooth deformation $(\bar{\mathbf{M}},\bar{\mathbf{g}}_{\delta},\bar{\mathbf{k}}_{\delta},
\mathbf{X}_{\delta},\bar{\mathbf{E}}_{\delta})$ which differs from the original data $(\bar{\mathbf{M}},\bar{\mathbf{g}},\bar{\mathbf{k}},\mathbf{X},\bar{\mathbf{E}})$ only on a $\delta$-tubular neighborhood of $\Sigma$, and satisfies
\begin{equation}\label{6yj}
\bar{\mathbf{R}}_{\delta}-2|\mathbf{X}_{\delta}|_{\bar{\mathbf{g}}_{\delta}}^2
	+2\operatorname{div}_{\bar{\mathbf{g}}_{\delta}}\mathbf{X}_{\delta}\geq
2|\bar{\mathbf{E}}_{\delta}|_{\bar{\mathbf{g}}_{\delta}}^2
+|\bar{\mathbf{k}}_{\delta}|_{\bar{\mathbf{g}}_{\delta}}^2
+O(1)\quad\quad\text{ as }\quad\quad \delta\rightarrow 0,
\end{equation}
where $\bar{\mathbf{R}}_{\delta}$ is the scalar curvature of $\bar{\mathbf{g}}_{\delta}$.
\end{lemma}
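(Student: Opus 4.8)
The plan is to smooth the corner at $\Sigma$ by interpolating the metric, the symmetric 2-tensor $\bar{\mathbf{k}}$, the 1-form $\mathbf{X}$, and the electric field $\bar{\mathbf{E}}$ across a thin collar, following the corner-smoothing technique of Miao \cite{Miao} but now carried out simultaneously for all the fields while tracking the combination appearing in \eqref{inequality11}. First I would introduce a collar coordinate $t$ transverse to $\Sigma$, with $t<0$ inside $\bar{\Omega}$ and $t>0$ inside $\bar{M}_+$, so that within the $\delta$-tubular neighborhood the composite metric takes the form $dt^2 + g_t$ where $g_t$ is the induced metric on the level surface $\Sigma_t$. The metrics from both sides agree on $\Sigma$ but their normal derivatives (encoded in the second fundamental forms, hence the mean curvatures) differ; since the mean curvatures match by construction---the outer mean curvature $H_+=\bar{H}-X(\bar{\nu})$ was chosen precisely so---the jump in the full second fundamental form has vanishing trace, which is exactly the situation Miao's smoothing exploits to keep the distributional scalar curvature nonnegative up to a controllable error.

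**The smoothing construction.** I would replace the metric coefficients $g_t$ by a mollified family $g_t^\delta$ that agrees with the original data outside $[-\delta,\delta]$ and smoothly interpolates the one-sided normal derivatives across the corner, so that $\bar{\mathbf{g}}_\delta$ is $C^\infty$. The key point in Miao's computation is that for such an interpolation the scalar curvature $\bar{\mathbf{R}}_\delta$ may develop a negative contribution of order $\delta^{-1}$ concentrated in the collar, but because the mean curvatures match this singular term integrates to a quantity that converges (to the surface term measuring the mean curvature defect, which here vanishes), leaving a pointwise bound of the form $\bar{\mathbf{R}}_\delta \geq -C$ with $C$ independent of $\delta$; this $O(1)$ error is exactly what appears on the right-hand side of \eqref{6yj}. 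Concurrently I would extend $\bar{\mathbf{k}}$, $\mathbf{X}$, and $\bar{\mathbf{E}}$ across the collar by a similar mollification, noting that on the $\bar{M}_+$ side these all vanish, so I only need to smoothly taper the inner-side values to zero over $[-\delta,0]$. Since tapering keeps $|\mathbf{X}_\delta|$, $|\bar{\mathbf{k}}_\delta|$, and $|\bar{\mathbf{E}}_\delta|$ uniformly bounded (they are bounded on $\bar{\Omega}$ and set smaller by the cutoff), the terms $2|\mathbf{X}_\delta|^2$, $|\bar{\mathbf{k}}_\delta|^2$, $2|\bar{\mathbf{E}}_\delta|^2$ contribute only an $O(1)$ amount, and I would absorb them into the error.

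**Handling the divergence term.** The subtle piece is the term $2\operatorname{div}_{\bar{\mathbf{g}}_\delta}\mathbf{X}_\delta$. Because $\mathbf{X}_\delta$ is tapered to zero across the collar, its normal derivative can be of order $\delta^{-1}$, so $\operatorname{div}_{\bar{\mathbf{g}}_\delta}\mathbf{X}_\delta$ is a priori large and of indefinite sign. I would control this by arranging the taper so that the large part of the divergence has a favorable sign, or more robustly by observing that in \eqref{inequality11} this divergence appears on the left together with $\bar{\mathbf{R}}_\delta$; the natural object to track is the combined quantity $\bar{\mathbf{R}}_\delta + 2\operatorname{div}_{\bar{\mathbf{g}}_\delta}\mathbf{X}_\delta$, which is what enters Witten's identity and the stability inequality. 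The strategy is to show that the singular contributions from the metric smoothing and from $\operatorname{div}\mathbf{X}_\delta$ can be matched so their worst-order terms cancel or combine with a good sign, leaving the stated $O(1)$ remainder. I expect \textbf{this cancellation of the $\delta^{-1}$ singular terms between the scalar-curvature defect and the divergence of the tapered $\mathbf{X}$} to be the main obstacle: it requires choosing the interpolation profiles compatibly, and verifying that the mean-curvature matching $H_+=\bar{H}-X(\bar{\nu})$ is precisely the condition that makes the leading singularities cancel in the combined quantity rather than in the scalar curvature alone.

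**Conclusion.** Once the uniform lower bound \eqref{6yj} is in place, the deformation is manifestly smooth, equals the original data outside the $\delta$-neighborhood of $\Sigma$, and satisfies the required stability-type inequality with an $O(1)$ error as $\delta\to 0$, which is all that the lemma asserts. The $O(1)$ error is acceptable because in the subsequent application it is integrated against the conformal factor and shown to be negligible in the relevant norm, exactly as in the analogous argument used in the proof of Theorem \ref{thmBYchargein}.
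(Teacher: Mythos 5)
Your overall strategy is the same as the paper's: smooth the metric by Miao's mollification of the Gaussian-coordinate family $\gamma(t)$, mollify $\mathbf{X}$, $\bar{\mathbf{k}}$, $\bar{\mathbf{E}}$ compatibly, and arrange for the singular term in $\bar{\mathbf{R}}_{\delta}$ to cancel against the singular term in $2\operatorname{div}_{\bar{\mathbf{g}}_{\delta}}\mathbf{X}_{\delta}$. Your third paragraph identifies exactly the right mechanism. However, your second paragraph contains a genuine error that contradicts it: the mean curvatures at the corner do \emph{not} match. The inner mean curvature of $\Sigma$ with respect to $\bar{g}$ is $H_{-}=\bar{H}$, while the outer one is $H_{+}=\bar{H}-X(\bar{\nu})$, so the jump is $H_{-}-H_{+}=X(\bar{\nu})$, which is generically nonzero. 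Consequently the claim that $\bar{\mathbf{R}}_{\delta}\geq -C$ pointwise on its own is false: Miao's computation gives a concentrated term $\bigl(H_{-}-H_{+}\bigr)\tfrac{100}{\delta^{2}}\phi\bigl(\tfrac{100t}{\delta^{2}}\bigr)=X(\bar{\nu})\tfrac{100}{\delta^{2}}\phi\bigl(\tfrac{100t}{\delta^{2}}\bigr)$ of indefinite sign, and if the mean curvatures really did match the entire divergence-cancellation discussion in your third paragraph would be unnecessary. The matching condition $H_{+}=\bar{H}-X(\bar{\nu})$ is designed to make the \emph{combined} quantity regular, not the scalar curvature alone.

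The step you flag as the main obstacle is carried out in the paper as follows, and your proposed taper is too coarse to achieve it. One mollifies the normal component $\mathbf{X}_{t}$ on the \emph{same} scale $\sigma_{\delta}(t)=\tfrac{\delta^{2}}{100}$ and with the \emph{same} profile $\phi$ used for the metric, so that
\begin{equation*}
\partial_{t}\mathbf{X}_{\delta t}
=\bigl(\mathbf{X}_{t+}(x)-\mathbf{X}_{t-}(x)\bigr)\tfrac{50}{\delta^{2}}
\phi\bigl(\tfrac{100t}{\delta^{2}}\bigr)+O(1)
=-X(\bar{\nu})\tfrac{50}{\delta^{2}}\phi\bigl(\tfrac{100t}{\delta^{2}}\bigr)+O(1),
\end{equation*}
since $\mathbf{X}_{t+}=0$ on the extension side and $\mathbf{X}_{t-}=X(\bar{\nu})$. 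Doubling this and adding it to the scalar curvature singularity gives exact pointwise cancellation, leaving $O(1)$. If instead you taper $\mathbf{X}$ to zero over $[-\delta,0]$ while the metric is smoothed over an interval of length $O(\delta^{2})$, the divergence singularity is of size $\delta^{-1}$ spread over a region disjoint from the $\delta^{-2}$ scalar-curvature spike, and no cancellation occurs; the left side of \eqref{6yj} would then be unbounded below as $\delta\rightarrow 0$. So the lemma hinges on choosing identical mollification scales and profiles for $\gamma$ and for the normal component of $\mathbf{X}$, which your proposal anticipates in spirit but does not pin down; the tangential components of $\mathbf{X}$, as well as $\bar{\mathbf{k}}$ and $\bar{\mathbf{E}}$, need only be kept uniformly bounded, as you say.
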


\begin{proof}
For the metric we select the same deformation as in \cite[Section 3]{Miao}.
Since a similar mollification will be used for other quantities, we record here the basic set up. Fix $\varepsilon>0$ and write the metric near the corner $(-2\varepsilon,2\varepsilon)\times \Sigma\subset \bar{\mathbf{M}}$ in Gaussian coordinates as
\begin{equation}
\bar{\mathbf{g}}=dt^2+\gamma_{ij}(t,x)dx^i dx^j,
\end{equation}
where $\gamma(t)$ is a continuous curve of metrics on $\Sigma$.
Let $0<\delta\ll\varepsilon$ and $\phi\in C^{\infty}_{c}((-1,1))$ be the standard mollifier with $0\leq \phi(s)\leq 1$ and $\int_{-1}^1\phi ds=1$. Moreover, set $\sigma_{\delta}(t)=\delta^2\sigma(t/\delta)$ where $\sigma\in C^{\infty}_{c}((-\tfrac{1}{2},\tfrac{1}{2}))$ satisfies
\begin{equation}
\sigma(t)=\begin{cases}
\sigma(t)= \frac{1}{100} & |t|\leq \frac{1}{4}\\
0<\sigma(t)\leq \frac{1}{100} & \frac{1}{4}<|t|<\frac{1}{2}
\end{cases}.
\end{equation}
For $t\in (-\varepsilon,\varepsilon)$ the smoothed curve of metrics on $\Sigma$ is given by
\begin{equation}
\gamma_{\delta}(t,x)=\int_{\mathbb{R}}\gamma(t-\sigma_{\delta}(t)s,x)\phi(s) ds=\begin{cases}	
\int_{\mathbb{R}}\gamma(s,x)\frac{1}{\sigma_{\delta}(t)}
\phi(\frac{t-s}{\sigma_{\delta}(t)}) ds & \sigma_{\delta}(t)>0\\
\gamma(t) & \sigma_{\delta}(t)=0
\end{cases},
\end{equation}
so that
\begin{equation}\label{deformmetric}
\bar{\mathbf{g}}_{\delta}=\begin{cases}
dt^2+\gamma_\delta(t,x) & (t,x)\in (-\varepsilon,\varepsilon)\times\Sigma\\
\bar{\mathbf{g}} & (t,x)\notin (-\varepsilon,\varepsilon)\times\Sigma\\
\end{cases}
\end{equation}
is the desired smoothing of $\bar{\mathbf{g}}$. It is shown in \cite{Miao} that the scalar curvature of the deformation satisfies
\begin{equation}
\bar{\mathbf{R}}_{\delta}=O(1),\qquad (t,x)\in\left\{\frac{\delta^2}{100}<|t|\leq\frac{\delta}{2}\right\}\times \Sigma,
\end{equation}
\begin{equation}
\bar{\mathbf{R}}_{\delta}=O(1)+\left(H_-(x)-H_{+}(x)\right)\frac{100}{\delta^2}
\phi\left(\frac{100 t}{\delta^2}\right),\qquad (t,x)\in\left[-\frac{\delta^2}{50},\frac{\delta^2}{50}\right]\times \Sigma,
\end{equation}
where $H_-$, $H_+$ are the mean curvatures of $\Sigma$ with respect to $\bar{g}$ and $\bar{g}_+$.

Next, near the corner surface $\Sigma$ write
\begin{equation}
\mathbf{X}=\mathbf{X}_t dt+\mathbf{X}_i dx^i,
\end{equation}
and denote the smoothed 1-form by
\begin{equation}
\mathbf{X}_{\delta}= \mathbf{X}_{\delta t} dt + \mathbf{X}_{\delta i} dx^i.
\end{equation}
The $\mathbf{X}_{\delta i}$ are any smoothing of the tangential components which keeps tangential derivatives bounded and agrees with $\mathbf{X}_i$ outside the $\delta$-tubular neighborhood of $\Sigma$, and the smoothed normal component is defined by
\begin{equation}
\mathbf{X}_{\delta t}=\int_{\mathbb{R}} \mathbf{X}_t(t-2\sigma_{\delta}(t)s)
\phi(\varsigma_{\delta}(t)s) ds,
\end{equation}
where $\varsigma_{\delta}(t)=1$ for $|t|>\varepsilon$ and $\varsigma_{\delta}(t)=2$ for $|t|<\frac{\delta}{4}$ with $|\varsigma'|\leq 2\varepsilon^{-1}$. Then for $|t|>\frac{\delta^2}{50}$ we have
\begin{equation}
\partial_t \mathbf{X}_{\delta t}=\int_{\mathbb{R}} \left[ \mathbf{X}_{t}'(t-2\sigma_{\delta}(t)s)\left(1-2\sigma_{\delta}'(t)s\right)
\phi(\varsigma_{\delta}(t)s)+\mathbf{X}_t(t-2\sigma_{\delta}(t)s)
\phi'(\varsigma_{\delta}(t)s)\varsigma_{\delta}'(t)\right] ds,
\end{equation}
and for $|t|<\frac{\delta}{4}$ since $\sigma_{\delta}(t)=\frac{\delta^2}{100}$ and $\varsigma_{\delta}(t)=2$ it follows that
\begin{align}
\begin{split}
\partial_t \mathbf{X}_{\delta t}&=\partial_t \int_{\mathbb{R}} \mathbf{X}_t \left(t-\tfrac{\delta^2}{50}s\right)\phi(2s) ds\\
=&\partial_t \int_{\mathbb{R}} \mathbf{X}_t (s)\frac{50}{\delta^2}\phi\left(\tfrac{100(t-s)}{\delta^2}\right) ds\\
=&-\int_{\mathbb{R}} \mathbf{X}_t(s)\frac{50}{\delta^2}\partial_s\phi\left(\frac{100(t-s)}{\delta^2}\right) ds\\
=&\left(\mathbf{X}_{t+}(x)-\mathbf{X}_{t-}(x)\right)\frac{50}{\delta^2}
\phi\left(\frac{100t}{\delta^2}\right)\\
&+\int_{-\infty}^0 \partial_s \mathbf{X}_{t}(s)\frac{50}{\delta^2}\phi\left(\tfrac{100(t-s)}{\delta^2}\right) ds+\int_{0}^{\infty} \partial_s \mathbf{X}_{t}(s)\frac{50}{\delta^2}
\phi\left(\tfrac{100(t-s)}{\delta^2}\right) ds,
\end{split}
\end{align}
where $\mathbf{X}_{t \pm}=\lim_{t\rightarrow 0^{\pm}}\mathbf{X}_t$.
In particular we have
\begin{equation}\label{..}
\partial_t \mathbf{X}_{\delta t}=O(1),\qquad (t,x)\in\left\{\frac{\delta^2}{50}<|t|\leq\frac{\delta}{2}\right\}\times \Sigma,
\end{equation}
and
\begin{equation}\label{...}
\partial_t \mathbf{X}_{\delta t}=
\left(\mathbf{X}_{t+}(x)-\mathbf{X}_{t-}(x)\right)\frac{50}{\delta^2}
\phi\left(\frac{100t}{\delta^2}\right)
+O(1),\qquad (t,x)\in\left[-\frac{\delta^2}{50},\frac{\delta^2}{50}\right]\times \Sigma.
\end{equation}

Consider now the divergence of the smoothed 1-form
\begin{align}\label{p}
\begin{split}
\text{div}_{\bar{\mathbf{g}}_{\delta}}\mathbf{X}_{\delta}
=&\frac{1}{\sqrt{\det \bar{\mathbf{g}}_{\delta}}}
\partial_a \left(\sqrt{\det\bar{\mathbf{g}}_{\delta}}\bar{\mathbf{g}}_{\delta}^{ab}
\mathbf{X}_{\delta b}\right)\\
=&\frac{1}{\sqrt{\det \gamma_{\delta}}}\left[
\partial_t\left(\sqrt{\det\gamma_{\delta}} \mathbf{X}_{\delta t}\right)
+
\partial_i \left(\sqrt{\det\gamma_{\delta}}\gamma_{\delta}^{ij}
\mathbf{X}_{\delta j}\right)\right]\\
=&\partial_t \mathbf{X}_{\delta t}+\frac{1}{2}\mathbf{X}_{\delta t}\partial_t \log\det\gamma_{\delta}
+\frac{1}{\sqrt{\det \gamma_{\delta}}}
\partial_i \left(\sqrt{\det\gamma_{\delta}}\gamma_{\delta}^{ij}
\mathbf{X}_{\delta j}\right).
\end{split}
\end{align}
Together with \eqref{..} and \eqref{...} this yields
\begin{equation}
\text{div}_{\bar{\mathbf{g}}_{\delta}}\mathbf{X}_{\delta}=O(1),\qquad (t,x)\in\left\{\frac{\delta^2}{50}<|t|\leq\frac{\delta}{2}\right\}\times \Sigma,
\end{equation}
and
\begin{equation}
\text{div}_{\bar{\mathbf{g}}_{\delta}}\mathbf{X}_{\delta}
=\left(\mathbf{X}_{t+}(x)-\mathbf{X}_{t-}(x)\right)\frac{50}{\delta^2}
\phi\left(\frac{100t}{\delta^2}\right)
+O(1),\qquad (t,x)\in\left[-\frac{\delta^2}{50},\frac{\delta^2}{50}\right]\times \Sigma.
\end{equation}
Therefore
\begin{equation}
\bar{\mathbf{R}}_{\delta}-2|\mathbf{X}_{\delta}|_{\bar{\mathbf{g}}_{\delta}}^2
+2\text{div}_{\bar{\mathbf{g}}_{\delta}}\mathbf{X}_{\delta}=O(1),\qquad (t,x)\in\left\{\frac{\delta^2}{50}<|t|\leq\frac{\delta}{2}\right\}\times \Sigma,
\end{equation}
and
\begin{align}
\begin{split}
\bar{\mathbf{R}}_{\delta}-2|\mathbf{X}_{\delta}|_{\bar{\mathbf{g}}_{\delta}}^2
+2\text{div}_{\bar{\mathbf{g}}_{\delta}}\mathbf{X}_{\delta}=&
\left(H_-(x)-\mathbf{X}_{t-}(x)+\mathbf{X}_{t+}(x)-H_{+}(x)\right)\frac{100}{\delta^2}
\phi\left(\frac{100 t}{\delta^2}\right)+O(1),\\
&(t,x)\in[-\frac{\delta^2}{50},\frac{\delta^2}{50}]\times \Sigma.
\end{split}
\end{align}
Since $H_{+}=\bar{H}-X(\bar{\nu})$, $H_-=\bar{H}$, $\mathbf{X}_{t+}=0$, and $\mathbf{X}_{t-}=X(\bar{\nu})$ we find that
\begin{equation}
\bar{\mathbf{R}}_{\delta}-2|\mathbf{X}_{\delta}|_{\bar{\mathbf{g}}_{\delta}}^2
+2\text{div}_{\bar{\mathbf{g}}_{\delta}}\mathbf{X}_{\delta}=O(1),\qquad (t,x)\in\left\{|t|\leq\frac{\delta}{2}\right\}\times \Sigma.
\end{equation}

The smoothings $\bar{\mathbf{E}}_{\delta}$ and $\bar{\mathbf{k}}_{\delta}$ may be achieved in a similar manner. Since derivatives of these quantities do no appear in \eqref{6yj}, we only require that they remain uniformly bounded and agree with $\bar{\mathbf{E}}$ and $\bar{\mathbf{k}}$ for $|t|>\frac{\delta}{2}$. The desired result \eqref{6yj} now follows from \eqref{inequality11} which holds on both sides of $\Sigma$.
\end{proof}

Lemma \ref{gluing} shows that $\bar{\mathbf{R}}_{\delta}-2|\mathbf{X}_{\delta}|_{\bar{\mathbf{g}}_{\delta}}^2
+2\text{div}_{\bar{\mathbf{g}}_{\delta}}\mathbf{X}_{\delta}$ is nonnegative in a weak sense as long as the dominant energy condition holds. More precisely, this quantity is nonnegative except on a small set, where it remains uniformly bounded as $\delta\rightarrow 0$. This ensures that the smoothed Jang metric $\bar{\mathbf{g}}_{\delta}$ may be conformally deformed to zero scalar curvature as in the Schoen-Yau proof of the positive mass theorem \cite{SchoenYauII}. These arguments, as well as Lemma \ref{gluing}, extend to higher dimensions and lead to a new proof of nonnegativity for the Wang-Yau mass that does not require spinors. The spin assumption needed for previous versions of the result can then be removed.

In order to state the higher dimensional result, a revised definition of admissibility will be given. Consider a codimension two closed spacelike submanifold $(\Sigma,\sigma)$ of a $d+1$-dimensional spacetime $N^{d,1}$. We will say that the surface $\Sigma$ and time function $\tau$ are \textit{generally admissible} if (1) the metric $\hat{\sigma}=\sigma+d\tau^2$ on $\Sigma$
has positive scalar curvature and is isometric to a mean convex star-shaped hypersurface in $\mathbb{R}^d$, (2)
$\Sigma$ arises as the untrapped boundary of a spacelike hypersurface $(\Omega,g,k)\hookrightarrow N^{d,1}$, and (3) the generalized mean curvature is positive $\mathfrak{H}(e_{d}',e_{d+1}')>0$ for the normal bundle frame $\{e_{d}',e_{d+1}'\}$ determined by the solution of Jang's equation. Note that condition (1) guarantees an isometric embedding into Minkowski space $\iota:\Sigma\hookrightarrow\mathbb{R}^{d,1}$, where $(\hat{\Sigma},\hat{\sigma})$ is the projection of $\iota(\Sigma)$ onto a constant time slice. From this one may define the Wang-Yau energy as in \eqref{wangyauenergy}.

\begin{theorem}
Let $(\Sigma,\sigma)$ be a codimension two closed spacelike submanifold of a spacetime $N^{d,1}$, $3\leq d\leq 7$, satisfying the dominant energy condition.
If $\tau$ is a time function that is generally admissible with $\Sigma$ and $\iota:\Sigma\hookrightarrow\mathbb{R}^{d,1}$ is an associated isometric embedding, then the Wang-Yau energy is nonnegative $E_{WY}(\Sigma,\iota,\tau)\geq 0$. Equality occurs if and only if the spacelike hypersurface $(\Omega,g,k)$, whose boundary is $(\Sigma,\sigma)$, arises from an embedding into Minkowski space. In particular, the Wang-Yau mass is nonnegative $m_{WY}(\Sigma)\geq 0$.
\end{theorem}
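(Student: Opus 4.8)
The plan is to reproduce the three-step Liu-Yau/Wang-Yau architecture reviewed in Section~\ref{sec3}, but with Witten's spinorial positive mass theorem replaced by the conformal deformation technique of Schoen-Yau together with the corner smoothing of Lemma~\ref{gluing}. First I would solve the Jang-Dirichlet problem \eqref{Jang} on $\Omega$ with boundary data $\tau$: general admissibility supplies the untrapped boundary condition needed for the boundary gradient estimates, so a solution $f$ exists and yields the Jang deformation $(\bar{\Omega},\bar{g},\bar{k})$ with $X$ as in \eqref{def-h-w-X}. Since no electromagnetic field is present here, the scalar-curvature identity \eqref{Jangscalar} together with the dominant energy condition (which gives $\mu-J(w)\ge 0$) produces the weak stability inequality $\bar{R}-2|X|_{\bar{g}}^2+2\operatorname{div}_{\bar{g}}X\ge|\bar{k}|_{\bar{g}}^2\ge 0$.

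Next I would build the higher-dimensional Bartnik-Shi-Tam extension $(\bar{M}_+,\bar{g}_+)$, with $\bar{k}_+=0$ and $X_+=0$, of zero scalar curvature and boundary mean curvature $H_+=\bar{H}-X(\bar{\nu})$. Condition~(1) of general admissibility is exactly what the construction needs in dimension $d$: the projected metric $\hat{\sigma}$ embeds as a mean convex star-shaped hypersurface in $\mathbb{R}^d$, so the normal flow from $\hat{\Sigma}$ sweeps out the exterior and the $d$-dimensional analogue of \eqref{parabolic1} yields an asymptotically flat exterior realizing the prescribed boundary data; positivity of $H_+$ comes from admissibility condition~(3) via $\mathfrak{H}(e_3',e_4')/\sqrt{1+|\nabla\tau|^2}=\bar{H}-X(\bar{\nu})$. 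Gluing along $\Sigma$ produces the composite $(\bar{\mathbf{M}},\bar{\mathbf{g}},\bar{\mathbf{k}},\mathbf{X})$ with a corner, and Lemma~\ref{gluing} (with the electric term dropped) smooths it to $(\bar{\mathbf{M}},\bar{\mathbf{g}}_{\delta},\bar{\mathbf{k}}_{\delta},\mathbf{X}_{\delta})$ satisfying $\bar{\mathbf{R}}_{\delta}-2|\mathbf{X}_{\delta}|^2+2\operatorname{div}\mathbf{X}_{\delta}\ge|\bar{\mathbf{k}}_{\delta}|^2+O(1)$, with the error supported on a slab that shrinks as $\delta\to 0$.

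The crucial step, which replaces the spinor argument, is the conformal deformation $\tilde{\mathbf{g}}_{\delta}=u_{\delta}^{4/(d-2)}\bar{\mathbf{g}}_{\delta}$ to zero scalar curvature. The Schoen-Yau mechanism absorbs the divergence term: testing the conformal Laplacian against $u$ and integrating by parts uses the pointwise identity $2|\mathbf{X}_{\delta}|^2u^2+4u\langle\mathbf{X}_{\delta},\nabla u\rangle=2|u\mathbf{X}_{\delta}+\nabla u|^2-2|\nabla u|^2\ge -2|\nabla u|^2$, so that $\int\bigl(\tfrac{4(d-1)}{d-2}|\nabla u|^2+\bar{\mathbf{R}}_{\delta}u^2\bigr)\ge\int\tfrac{2d}{d-2}|\nabla u|^2$ up to the $O(1)$ defect, whose negative part can be made arbitrarily small in $L^{d/2}$ as $\delta\to 0$ since it lives on a vanishingly thin slab. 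This keeps the first eigenvalue nonnegative and guarantees a positive conformal factor $u_{\delta}\to 1$ preserving asymptotic flatness, with $\tilde{\mathbf{m}}_{\delta}\to\bar{\mathbf{m}}$. I would then invoke the \emph{Riemannian} positive mass theorem of Schoen-Yau for $3\le d\le 7$ to conclude $\tilde{\mathbf{m}}_{\delta}\ge 0$, hence $\bar{\mathbf{m}}\ge 0$; it is precisely this route through regular minimal hypersurfaces that forces $d\le 7$ while simultaneously removing the spin hypothesis. The higher-dimensional Shi-Tam monotonicity, chained with the minimizing inequality \eqref{genmean}, then gives $E_{WY}(\Sigma,\iota,\tau)\ge\frac{1}{8\pi}\int_{\hat{\Sigma}}\bigl(\hat{H}_0-\mathfrak{H}(e_3',e_4')/\sqrt{1+|\nabla\tau|^2}\bigr)\,dA_{\hat{\sigma}}\ge\bar{\mathbf{m}}\ge 0$, and $m_{WY}(\Sigma)\ge 0$ follows because the mass is the infimum of the energy.

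For rigidity I would trace the saturated chain backwards: $E_{WY}=0$ forces $\bar{\mathbf{m}}=0$, so the rigidity half of the positive mass theorem makes the limiting deformed composite flat, which in turn forces $u_{\delta}\equiv 1$, $X=0$, $\bar{k}=h-k=0$, and $\mu=|J|_g$ with $w$ aligned to $J$. The vanishing $h=k$ is the integrability condition identifying the Jang graph with the original slice, and together with the isometric embedding $\iota$ this exhibits $(\Omega,g,k)$ as a hypersurface in $\mathbb{R}^{d,1}$. The hard part will be the conformal deformation step: one must confirm that the corner defect introduced by Lemma~\ref{gluing} is genuinely $L^{d/2}$-small and that the Schoen-Yau absorption of $\operatorname{div}\mathbf{X}_{\delta}$ survives in this smoothed, non-time-symmetric, higher-dimensional setting, so that the conformal Laplacian stays nonnegative and the deformed masses converge correctly.
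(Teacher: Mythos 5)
Your proposal follows essentially the same route as the paper: Jang--Dirichlet deformation, the Eichmair--Miao--Wang generalization of the Bartnik--Shi--Tam extension with $H_+=\bar H-X(\bar\nu)$, corner smoothing via Lemma~\ref{gluing}, Schoen--Yau conformal deformation absorbing $\operatorname{div}\mathbf{X}_\delta$, and the Riemannian positive mass theorem, chained through the Shi--Tam monotonicity and \eqref{genmean}. The only points worth noting are that the paper attributes the restriction $3\le d\le 7$ primarily to regularity of the Jang equation (Eichmair) rather than to the minimal-hypersurface route in the positive mass theorem, and that it flags the conformal darning of asymptotically cylindrical ends produced by Jang blow-up at interior apparent horizons as the main higher-dimensional technicality, which your outline omits but which is resolved by the same reference.
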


\begin{proof}
As in the beginning of this section let $(\bar{\Omega},\bar{g})$ be the Jang deformation of the initial data $(\Omega,g,k)$, where the solution of Jang's equation satisfies the Dirichlet boundary condition $f=\tau$. The dimensional restriction $3\leq d\leq 7$ is used to obtain locally regular solutions of the Jang equation \cite{Eichmair1}, the theory of which parallels closely that of stable minimal hypersurfaces. Observe that
the induced metric on $\partial\bar{\Omega}$ is then $\hat{\sigma}=\sigma+d\tau^2$. Since $\tau$ is generally admissible, (1) guarantees that there exists a generalized Bartnik-Shi-Tam extension \cite{EichmairMiaoWang} denoted $(\bar{M}_+,\bar{g}_+,X_{+}=0)$ with boundary mean curvature
\begin{equation}
H_+=\bar{H}-X(\bar{\nu})=\frac{\mathfrak{H}(e_{d}',e_{d+1}')}{\sqrt{1+|\nabla\tau|^2}}.
\end{equation}
Note that this quantity is positive by property (3) of the generally admissible condition. The composite manifold $(\bar{\mathbf{M}},\bar{\mathbf{g}},\mathbf{X})$
has a corner at $\Sigma$ which may be smoothed according to Lemma \ref{gluing} to obtain $(\bar{\mathbf{M}},\bar{\mathbf{g}}_{\delta},\mathbf{X}_{\delta})$. The mass $\bar{\mathbf{m}}$ of this smoothed manifold agrees with that of the Bartnik-Shi-Tam extension and thus satisfies (see \cite{EichmairMiaoWang,WangYauPMT1})
\begin{equation}
E_{WY}(\Sigma,\iota,\tau)\geq \frac{1}{8\pi}\int_{\hat{\Sigma}}\left(\hat{H}_0
-\frac{\mathfrak{H}(e_3',e_4')}{\sqrt{1+|\nabla\tau|^2}}\right)dA_{\hat{\sigma}}
\geq\bar{\mathbf{m}},
\end{equation}
where $\hat{H}_0$ is the mean curvature of the projection $(\hat{\Sigma},\hat{\sigma})$ in $\mathbb{R}^d$.

It remains to show that the mass $\bar{\mathbf{m}}$ is nonnegative. The inequality \eqref{6yj} implies that one may follow the steps of the positive mass theorem \cite{SchoenYauII} to derive this conclusion. The generalization of this approach to dimensions $d>3$ encounters some technical difficulties, particularly with the conformal darning of the asymptotically cylindrical ends produced by blow-up of the Jang equation. However these have been resolved by Eichmair in
\cite{Eichmair1} for $3\leq d\leq 7$.  Ultimately one arrives at an asymptotically flat manifold of nonnegative scalar curvature having mass $\tilde{\mathbf{m}}\leq\bar{\mathbf{m}}$. Finally by appealing to the Riemannian version of the positive mass theorem \cite{SchoenYauIV} we find that $\tilde{\mathbf{m}}\geq 0$, from which the desired result follows. The rigidity statement of the theorem is established in the typical manner \cite{Eichmair1,SchoenYauII}.
\end{proof}

\section{Mass Lower Bound from the Conformal Factor}
\label{sec6}\setcounter{equation}{0}
\setcounter{section}{6}

Let $(\Omega,g,k,E)$ be a compact initial data set with boundary $\partial\Omega=\Sigma_h\cup\Sigma$, where $\Sigma_h$ is an outermost apparent horizon in that no other closed apparent horizons are present, and the single component $\Sigma$ is untrapped. There then exists \cite{AnderssonMetzger,Eichmair,HanKhuri} a solution of Jang's equation
\eqref{Jang} which admits asymptotically cylindrical blow-up at $\Sigma_h$ and satisfies $f=\tau$ on $\Sigma$. Consider then the smoothed data $(\bar{\mathbf{M}},\bar{\mathbf{g}}_{\delta},\bar{\mathbf{k}}_{\delta},
\mathbf{X}_{\delta},\bar{\mathbf{E}}_{\delta})$ associated with the Bartnik-Shi-Tam extension of the Jang deformation, as constructed in Lemma \ref{gluing}. We seek a lower bound for its mass $\bar{\mathbf{m}}$, arising from a conformal transformation, that encodes contributions from angular momentum, charge, and horizon area.

In order to impose appropriate boundary conditions for the conformal factor, the various components of $\Sigma_h=\cup_{i=1}^{I}\Sigma_h^i$ will be divided into two groups. Recall that the portion of $(\bar{\mathbf{M}},\bar{\mathbf{g}}_{\delta})$ associated with the Jang surface $(\bar{\Omega},\bar{g})$, may be viewed as a graph $t=f(x)$ lying inside the product manifold $(\mathbb{R}\times\Omega,dt^2+g)$. For each $T>0$ let $\bar{\mathbf{M}}_T$ denote the parts of $\bar{\mathbf{M}}$ consisting of the extension $\bar{M}_+$ outside $\Sigma$ and the portion of $\bar{\Omega}$
lying between the two hyperplanes $t=\pm T$. Due to the asymptotically cylindrical blow-up of the Jang surface the intrinsic geometry of each boundary component $\partial_i\bar{\mathbf{M}}_T$, $i=1,\ldots,I$ approximates that of $\Sigma_h^i$, for $T$ large.
Let $\chi_{T}$ denote the one parameter family of functions on a given boundary component $\Sigma_h^i$ defined as the restriction
of $|X|_{\bar{g}}$ to $\Sigma_h^i$. The parametric estimates for the Jang equation \cite{SchoenYauII} guarantee that the sequence $\chi_{T}$ is uniformly bounded and equicontinuous. Therefore upon passing to a subsequence
$\chi_{T}\rightarrow\chi$ as $T\rightarrow\infty$, for some continuous function $\chi$. There are two possibilities for each component, either $\chi$ does not vanish identically, or $\chi$ vanishes identically. Boundary components for which $\chi\equiv 0$ will be labeled $i=1,\ldots, i_0$, while those for which $\chi\not\equiv 0$ will be labeled $i=i_0 +1,\ldots,I$

Let $(\bar{\mathbf{M}}\setminus\bar{\mathbf{M}}_{T})_{i}$ be the asymptotically cylindrical end associated with $\Sigma_h^i$, and set $\tilde{\mathbf{M}}_{T}=\bar{\mathbf{M}}
\setminus\cup_{i=1}^{i_0}(\bar{\mathbf{M}}\setminus\bar{\mathbf{M}}_{T})_{i}$. Note that $\tilde{\mathbf{M}}_{T}$ is $\bar{\mathbf{M}}$ minus the asymptotically cylindrical ends corresponding to $\chi\equiv 0$. Consider now the boundary value problem
\begin{align}\label{equation}
\begin{split}
&\Delta_{\bar{\mathbf{g}}_{\delta}}u_{\delta,T}
-\frac{1}{8}\left(\bar{\mathbf{R}}_{\delta}
-2|\bar{\mathbf{E}}_{\delta}|^{2}_{\bar{\mathbf{g}}_{\delta}}
-|\bar{\mathbf{k}}_{\delta}|^{2}_{\bar{\mathbf{g}}_{\delta}}\right)u_{\delta,T}=0
\quad\text{ on }\quad\tilde{\mathbf{M}}_{T},\\
&u_{\delta,T} \rightarrow 1\quad\quad\text{ as }\quad\quad|x|\to\infty,\\
&\partial_{\bar{\nu}}u_{\delta,T}+\frac{1}{4}\bar{H}u_{\delta,T}
=\frac{1}{4}\sqrt{\frac{16\pi}{|\partial_i \bar{\mathbf{M}}_T|_{\tilde{\mathbf{g}}_{\delta,T}}}}u^3_{\delta,T},\quad\quad\text{ on }\quad\quad \partial_i \bar{\mathbf{M}}_T,\quad\quad i=1,\ldots,i_0,\\
&u_{\delta,T}\rightarrow 0\quad\quad\text{ as }\quad\quad x\rightarrow\partial_{i}\bar{\mathbf{M}},\quad\quad i=i_0+1,\ldots,I,
\end{split}
\end{align}
where $\partial_{i}\bar{\mathbf{M}}$ denotes the limiting cross-section within the asymptotic end $(\bar{\mathbf{M}}\setminus\bar{\mathbf{M}}_{T})_{i}$ and $\tilde{\mathbf{g}}_{\delta,T}=u_{\delta,T}^4\bar{\mathbf{g}}_{\delta}$.
The boundary condition above ensures that the mean curvature of the boundary components $\partial_i \bar{\mathbf{M}}_T$, $i=1,\ldots,i_0$ with respect to $\tilde{\mathbf{g}}_{\delta,T}$ is given by $\tilde{H}=\sqrt{\frac{16\pi}{|\partial_i \bar{\mathbf{M}}_T|_{\tilde{\mathbf{g}}_{\delta,T}}}}$, and the equation for $u_{T}$ guarantees that the scalar curvature of the conformal metric is
\begin{equation}\label{conformalscalar}
\tilde{\mathbf{R}}_{\delta,T}
=\left(2|\bar{\mathbf{E}}_{\delta}|^{2}_{\bar{\mathbf{g}}_{\delta}}
+|\bar{\mathbf{k}}_{\delta}|^{2}_{\bar{\mathbf{g}}_{\delta}}\right) u^{-4}_T=2|\tilde{\mathbf{E}}_{\delta,T}|_{\tilde{\mathbf{g}}_{\delta,T}}^2
+|\tilde{\mathbf{k}}_{\delta,T}|_{\tilde{\mathbf{g}}_{\delta,T}}^2,
\end{equation}
where $\tilde{\mathbf{E}}^j_{\delta,T}=u_{\delta,T}^{-4}\bar{\mathbf{E}}_{\delta}^j$ and $\tilde{\mathbf{k}}_{\delta,T}=u_{\delta,T}^{-6}\bar{\mathbf{k}}_{\delta}$. The motivation for imposing different boundary conditions on the two groups of boundary components is to facilitate contributions to the ADM mass from each component. In what follows we will establish the existence of a regular positive solution to \eqref{equation}.

\begin{proposition}\label{solution}
Under the hypotheses of Theorems \ref{thm2.7}-\ref{thm2.12}, if $T$ is sufficiently large and $\delta$ is sufficiently small, there exists a smooth positive solution to the boundary value problem \eqref{equation}.
\end{proposition}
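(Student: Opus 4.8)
The plan is to solve the semilinear elliptic boundary value problem \eqref{equation} by the standard method of sub- and supersolutions adapted to the mixed boundary conditions and the asymptotically cylindrical ends. First I would observe that the coefficient $c_\delta := \tfrac{1}{8}(\bar{\mathbf{R}}_\delta - 2|\bar{\mathbf{E}}_\delta|^2 - |\bar{\mathbf{k}}_\delta|^2)$ in the linear operator is, by Lemma \ref{gluing} and the energy condition \eqref{inequality11}, nonnegative \emph{except} on a thin tubular shell of width $O(\delta)$ about the corner $\Sigma$, where it is merely bounded $O(1)$ and supported on a set of volume $O(\delta)$. Consequently its negative part $c_\delta^-$ has $L^{3/2}$-norm that is $O(\delta^{2/3})\to 0$, which is exactly the smallness needed to keep the Sobolev/Yamabe-type bilinear form
\begin{equation}
\mathcal{B}_\delta(v,v)=\int_{\tilde{\mathbf{M}}_T}\left(|\nabla v|^2_{\bar{\mathbf{g}}_\delta}+c_\delta v^2\right)dV_{\bar{\mathbf{g}}_\delta}
\end{equation}
coercive on the relevant weighted Sobolev space. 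I would therefore record as a first lemma that for $\delta$ small the linearized operator $L_\delta = -\Delta_{\bar{\mathbf{g}}_\delta}+c_\delta$ is a positive, invertible operator subject to the homogeneous boundary data, which underlies both existence and uniqueness.

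Next I would handle the three boundary behaviors separately. On the asymptotically flat end the Dirichlet-at-infinity condition $u\to 1$ is standard and fixes the normalization. On the Dirichlet-at-the-cylinder components $i=i_0+1,\dots,I$ (where $\chi\not\equiv 0$) the condition $u_{\delta,T}\to 0$ is naturally matched by constructing barriers from the indicial/separation-of-variables analysis on the model end $g_0=\xi_0(ds^2+\sigma_0)$, using that there $c_\delta$ inherits a strictly positive lower-order term from $|\bar{\mathbf{k}}_\delta|^2$ or $|\bar{\mathbf{E}}_\delta|^2$ (this is precisely why such components have $\chi\not\equiv 0$); one gets exponential decay barriers $e^{-\kappa s}$ that force and control the vanishing. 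The genuinely nonlinear feature is the Robin-type condition
\begin{equation}
\partial_{\bar\nu}u_{\delta,T}+\tfrac14\bar H\,u_{\delta,T}=\tfrac14\sqrt{\tfrac{16\pi}{|\partial_i\bar{\mathbf{M}}_T|_{\tilde{\mathbf g}_{\delta,T}}}}\,u_{\delta,T}^3
\end{equation}
on the components $i=1,\dots,i_0$; here the cubic right-hand side and the dependence of the prescribed area on $u$ itself make this a fixed-point rather than a pure PDE problem. I would decouple it by freezing the area $A_i=|\partial_i\bar{\mathbf{M}}_T|_{\tilde{\mathbf g}_{\delta,T}}$, solving the resulting problem with a fixed cubic Robin condition, and then running a Schauder or monotone-iteration fixed-point argument on the map $A\mapsto$ (new boundary area), using that larger $A$ weakens the nonlinearity to obtain monotonicity.

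For the frozen problem I would build an ordered pair of barriers: the constant $\bar u\equiv 1$ (or a slightly larger constant) serves as a supersolution once $\delta$ is small, since $c_\delta\ge 0$ off the corner shell and the boundary inequalities are arranged to point the right way, while a small positive sub-solution is obtained from the first eigenfunction of $L_\delta$ on $\tilde{\mathbf M}_T$ scaled down; the cubic term, being of higher order in $u$, does not obstruct the sub-solution at small amplitude. Monotone iteration between these barriers then yields a solution in $(0,1]$, and elliptic regularity up to the (smooth, post-mollification) boundary promotes it to a smooth positive solution, with the maximum principle and the invertibility of $L_\delta$ giving positivity and uniqueness. The main obstacle I anticipate is \textbf{not} the interior existence but the simultaneous control of the two limiting parameters: one must verify that the sub/supersolution construction and the fixed-point contraction are uniform as $T\to\infty$ (so that the cylindrical ends are genuinely captured and the frozen areas $A_i$ converge to $|\Sigma_h^i|$) while $\delta\to 0$ keeps $\|c_\delta^-\|_{L^{3/2}}$ small enough for coercivity; reconciling these two limits, and in particular showing the Robin boundary areas do not degenerate to $0$ or $\infty$ along the iteration, is where the real work lies.
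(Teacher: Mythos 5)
Your proposal founders on a point that the paper's construction is specifically engineered to handle: the zeroth-order coefficient $c_\delta=\tfrac18\bigl(\bar{\mathbf{R}}_{\delta}-2|\bar{\mathbf{E}}_{\delta}|^{2}-|\bar{\mathbf{k}}_{\delta}|^{2}\bigr)$ is \emph{not} merely $O(1)$ with small $L^{3/2}$ negative part on the corner shell. From the proof of Lemma \ref{gluing}, $\bar{\mathbf{R}}_{\delta}$ contains the term $\bigl(H_-(x)-H_+(x)\bigr)\tfrac{100}{\delta^2}\phi\bigl(\tfrac{100t}{\delta^2}\bigr)=X(\bar\nu)\tfrac{100}{\delta^2}\phi\bigl(\tfrac{100t}{\delta^2}\bigr)$, a spike of size $O(\delta^{-2})$ and indefinite sign supported on a slab of width $O(\delta^2)$; its $L^{3/2}$-norm is $O(\delta^{-2/3})$ and diverges. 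Only the combination $K_\delta=\bar{\mathbf{R}}_{\delta}-2|\bar{\mathbf{E}}_{\delta}|^{2}-|\bar{\mathbf{k}}_{\delta}|^{2}-2|\mathbf{X}_{\delta}|^2+2\operatorname{div}\mathbf{X}_{\delta}$ has negative part of size $O(1)$ on a set of volume $O(\delta)$, because the divergence of the smoothed $\mathbf{X}_\delta$ produces a matching spike that cancels the one in the curvature. Exploiting this cancellation forces you to integrate $\operatorname{div}\mathbf{X}_\delta$ by parts against $(1+v)^2$, which generates first-order terms $\mathbf{X}_\delta\cdot\nabla v\,(1+v)$ (absorbable into the Dirichlet energy) and boundary terms $X(\bar\nu)(1+v)^2$ that must be combined with the Robin data. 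This is precisely the structure of the functional $P(v)$ in the paper's proof, and it is an integral-level manipulation with no pointwise analogue: your claimed supersolution $\bar u\equiv 1$ satisfies $-\Delta\bar u+c_\delta\bar u=c_\delta$, which is of order $-\delta^{-2}$ wherever $X(\bar\nu)<0$, so the pointwise barrier inequality fails badly and the sub/supersolution scheme does not get off the ground with the barriers you propose.

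Two further steps would also fail as written. First, on the components $i=1,\dots,i_0$ the mean curvature $\bar H$ tends to zero as $T\to\infty$ (these approximate the cylindrical blow-up at the horizon) while $\sqrt{16\pi/A_i}$ stays bounded below, so $\bar u\equiv 1$ violates the supersolution direction of the Robin inequality $\partial_{\bar\nu}\bar u+\tfrac14\bar H\bar u\geq\tfrac14\sqrt{16\pi/A_i}\,\bar u^3$; indeed there is no reason the actual solution is bounded by $1$. Second, for a small subsolution $\underline u=\varepsilon\varphi$ the cubic term is $O(\varepsilon^3)$ while the linear boundary terms are $O(\varepsilon)$, so smallness of amplitude makes the boundary inequality \emph{harder}, not easier, contrary to your assertion. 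The paper instead minimizes $P$ directly (the term $\tfrac{\sqrt\pi}{2}(\int(1+v)^4)^{1/2}$ builds the self-consistent area into the Euler--Lagrange equation, so no fixed point on $A_i$ is needed), proves positivity by integrating the equation over the set $\{u<0\}$ and reusing the coercivity estimate, and rules out $u\equiv 0$ on the boundary by the Herzlich-type perturbation $v\mapsto v+\varepsilon z$ with an auxiliary linear solution $z$ — a degeneracy your barrier scheme would also have to exclude but does not address.
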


\begin{proof}
The boundary value problem \eqref{equation} corresponds to the Euler-Lagrange equations of the following functional
\begin{align}
\begin{split}
P(v)=&\frac{1}{2}\int_{\tilde{\mathbf{M}}_T}
\left(|\nabla v|^2_{\bar{\mathbf{g}}_{\delta}}
+\frac{1}{8}\left(\bar{\mathbf{R}}_{\delta}
-2|\bar{\mathbf{E}}_{\delta}|^{2}_{\bar{\mathbf{g}}_{\delta}}
-|\bar{\mathbf{k}}_{\delta}|^{2}_{\bar{\mathbf{g}}_{\delta}}\right)(1+v)^2\right)
dV_{\bar{\mathbf{g}}_{\delta}}\\
&-\sum_{i=1}^{i_0}\frac{1}{8}\int_{\partial_i\bar{\mathbf{M}}_T}\bar{H}(1+v)^2
d\bar{A}_{T}
+\sum_{i=1}^{i_0}\frac{\sqrt{\pi}}{2}\left(\int_{\partial_i\bar{\mathbf{M}}_T}(1+v)^4
d\bar{A}_{T}\right)^{1/2},
\end{split}
\end{align}
defined on the space of functions
\begin{equation}
\mathcal{W}=\left\{v\in W^{1,2}_{loc}(\tilde{\mathbf{M}}_T)\text{ }\Big| \text{ } |x|^{j-1}\nabla^j v\in L^2(\bar{\mathbf{M}}_T), \text{ }j=0,1,\text{ } 1+v\in W^{1,2} \left(\tilde{\mathbf{M}}_T \setminus \bar{\mathbf{M}}_{T_0}\right)\right\}.
\end{equation}
Note that by the trace theorem $v\in L^4(\partial \bar{\mathbf{M}}_T)$. Existence of a global minimizer may be achieved through direct methods in the calculus of variations. In particular, since the functional is weakly lower semicontinuous we need only establish that it is coercive. Regularity of the weak solution will follow from standard elliptic theory. Similar problems have been treated in
\cite[Proposition 3.2]{Herzlich} and \cite[Theorem 2.1]{Khuri}.

In order to show coercivity set
\begin{equation}
K_{\delta}=\bar{\mathbf{R}}_{\delta}
-2|\bar{\mathbf{E}}_{\delta}|^{2}_{\bar{\mathbf{g}}_{\delta}}
-|\bar{\mathbf{k}}_{\delta}|^{2}_{\bar{\mathbf{g}}_{\delta}}
-2|\mathbf{X}_{\delta}|^2_{\bar{\mathbf{g}}_{\delta}}
+2\text{div}_{\bar{\mathbf{g}}_{\delta}}\mathbf{X}_{\delta},
\end{equation}
and let $\xi_{T_0}\in C^{\infty}(\bar{\mathbf{M}})$ be a nonnegative cut-off function with its maximum value $\xi_{T_0}\equiv 1$ achieved on $\bar{\mathbf{M}}\setminus\bar{\mathbf{M}}_{T_0}$ and $\xi_{T_0}\equiv 0$ on $\bar{\mathbf{M}}_{T_0 -1}$.
Then according to Lemma \ref{gluing} and its proof
\begin{align}\label{Pin}
\begin{split}	
P(v)=&\frac{1}{2}\int_{\tilde{\mathbf{M}}_T}
\left(|\nabla v|^2_{\bar{\mathbf{g}}_{\delta}}+\frac{1}{8}K_{\delta}(1+v)^2
+\frac{1}{4}\left(|\mathbf{X}_{\delta}|^2_{\bar{\mathbf{g}}_{\delta}}	 -\text{div}_{\bar{\mathbf{g}}_{\delta}}\mathbf{X}_{\delta}\right)(1+v)^2\right)
dV_{\bar{\mathbf{g}}_{\delta}}\\
&-\sum_{i=1}^{i_0}\frac{1}{8}\int_{\partial_i\bar{\mathbf{M}}_T}\bar{H}(1+v)^2
d\bar{A}_{T}
+\sum_{i=1}^{i_0}\frac{\sqrt{\pi}}{2}\left(\int_{\partial_i\bar{\mathbf{M}}_T}(1+v)^4
d\bar{A}_{T}\right)^{1/2}\\
\geq& \int_{\tilde{\mathbf{M}}_T}
\left(\frac{1}{3}|\nabla v|^2_{\bar{\mathbf{g}}_{\delta}}
+\left(\frac{1}{8}(\mu_{EM}-|J|_g)\xi_{T_0}
+\frac{1}{32}|\mathbf{X}_{\delta}|_{\bar{\mathbf{g}}_{\delta}}^2\right)(1+v)^2
-\frac{1}{16}K_{\delta-}(1+v)^2\right)dV_{\bar{\mathbf{g}}_{\delta}}\\ &-\sum_{i=1}^{i_0}\frac{1}{8}\int_{\partial_i\bar{\mathbf{M}}_T}
\left(\bar{H}-X(\bar{\nu})\right)(1+v)^2
d\bar{A}_{T}
+\sum_{i=1}^{i_0}\frac{\sqrt{\pi}}{2}\left(\int_{\partial_i\bar{\mathbf{M}}_T}(1+v)^4
d\bar{A}_{T}\right)^{1/2},
\end{split}
\end{align}
where $K_{\delta}=K_{\delta+}-K_{\delta-}$ with $K_{\delta+}$ and $K_{\delta-}$ representing the nonnegative and nonpositive parts of the function. Note that $K_{\delta-}=0$ except possibly on a set, denoted $\Omega_{\delta}$, of small volume $|\Omega_{\delta}|=O(\delta)$ on which $K_{\delta -}=O(1)$. It then follows from
H\"{o}lder's inequality and the Sobolev inequality \cite[Lemma 3.1]{SchoenYauI} that for large $T$
\begin{align}\label{K-}
\begin{split}
\int_{\tilde{\mathbf{M}}_{T}}K_{\delta-}(1+v)^2dV_{\bar{\mathbf{g}}_{\delta}}
= &\int_{\Omega_{\delta}}K_{\delta-}(1+v)^2dV_{\bar{\mathbf{g}}_{\delta}}\\
\leq &
\left(\int_{\Omega_{\delta}}K_{\delta-}^{3/2}dV_{\bar{\mathbf{g}}_{\delta}}\right)^{2/3}
\left(\int_{\bar{\mathbf{M}}_{T_0}}(1+v)^6 dV_{\bar{\mathbf{g}}_{\delta}}\right)^{1/3}\\
\leq &\delta^{2/3}C_0 \int_{\bar{\mathbf{M}}_{T_0}}|\nabla v|^2_{\bar{\mathbf{g}}_{\delta}}
dV_{\bar{\mathbf{g}}_{\delta}}\\
\leq &\delta^{2/3}C_0 \int_{\tilde{\mathbf{M}}_{T}}|\nabla v|^2_{\bar{\mathbf{g}}_{\delta}}
dV_{\bar{\mathbf{g}}_{\delta}},
\end{split}
\end{align}
where the constant $C_0$ is independent of $\delta$ and $T$.
Next observe that Jensen's inequality gives
\begin{equation}\label{Jensen}
\left(\int_{\partial_i\bar{\mathbf{M}}_T}(1+v)^4
d\bar{A}_{T}\right)^{1/2}\geq
\frac{1}{\sqrt{|\partial_i \bar{\mathbf{M}}_T|}}
\int_{\partial_i\bar{\mathbf{M}}_T}(1+v)^2
d\bar{A}_{T},
\end{equation}
and notice that for each boundary component with labeling $i=1,\ldots,i_0$ the expression $\bar{H}-X(\bar{\nu})$ can be made arbitrarily small by choosing $T$ appropriately large. Therefore for small enough $\delta$ and large enough $T$ we have
\begin{align}\label{Pin1}
\begin{split}	
P(v)\geq& \int_{\tilde{\mathbf{M}}_T}
\left(\frac{1}{4}|\nabla v|^2_{\bar{\mathbf{g}}_{\delta}}
+\left(\frac{1}{8}(\mu_{EM}-|J|_g)\xi_{T_0}
+\frac{1}{32}|\mathbf{X}_{\delta}|_{\bar{\mathbf{g}}_{\delta}}^2\right)
(1+v)^2\right)dV_{\bar{\mathbf{g}}_{\delta}}\\
&
+\left(\frac{1-\vartheta_T}{2}\right)\sum_{i=1}^{i_0}\sqrt{\frac{\pi}{|\partial_i \bar{\mathbf{M}}_T|}}
\int_{\partial_i\bar{\mathbf{M}}_T}(1+v)^2
d\bar{A}_{T},
\end{split}
\end{align}
where $\vartheta_T \rightarrow 0$ as $T\rightarrow\infty$.

Global weighted $L^2$ bounds may be derived from \eqref{Pin1} as follows. On $\tilde{\mathbf{M}}_{T}\setminus\bar{\mathbf{M}}_{T_0}$ the cut-off function $\xi_{T_0}=1$, so that the strict energy condition on the horizon $\mu_{EM}-|J|_g \geq c>0$ together with \eqref{Pin1} gives an $L^2$ estimate for $1+v$.
On $\bar{\mathbf{M}}_{T_0}$, let $r$ be a smooth positive function which coincides with $|x|$ in the asymptotically flat end, then the
weighted Poincar\'{e} inequality \cite[Theorem 1.3]{Bartnik}
\begin{equation}
\int_{\bar{\mathbf{M}}_{T_0}}\frac{v^2}{r^2} dV_{\bar{\mathbf{g}}_{\delta}}
\leq C \int_{\bar{\mathbf{M}}_{T_0}}|\nabla v|_{\bar{\mathbf{g}}_{\delta}}^2 dV_{\bar{\mathbf{g}}_{\delta}},
\end{equation}
together with \eqref{Pin1} yield the desired weighted $L^2$ estimate for $1+v$.
Altogether this produces
\begin{equation}\label{fu}
P(v)\geq C^{-1}\int_{\tilde{\mathbf{M}}_T}\left(|\nabla v|^2_{\bar{\mathbf{g}}_{\delta}}+\xi_{T_0}(1+v)^2
+\frac{(1-\xi_{T_0})}{r^{2}}v^2\right)dV_{\bar{\mathbf{g}}_{\delta}},
\end{equation}
which is the coercivity bound that yields existence.

Let $u_T$ denote the solution of \eqref{equation} produced above, and assume that $u_T$ does not vanish identically on $\partial\tilde{\mathbf{M}}_T$. We will show that the solution is strictly positive. Suppose that $u_T$ is negative somewhere, and let $D_-$ denote the domain on which $u_T<0$. Since $u_T\rightarrow 1$ as $|x|\rightarrow \infty$, the closure of $D_- \cap \bar{\mathbf{M}}_{T}$ must be compact. Now multiply \eqref{equation} by $u_T$, integrate by parts, and apply the same techniques used to derive \eqref{Pin1} to find
\begin{align}\label{7890}
\begin{split}
&\int_{D_-}\frac{1}{4}|\nabla u_{\delta,T}|^2_{\bar{\mathbf{g}}_{\delta}}
dV_{\bar{\mathbf{g}}_{\delta}}\\
\leq &
-\int_{D_-}\frac{1}{8}K_{\delta}u_{\delta,T}^2 dV_{\bar{\mathbf{g}}_{\delta}}
-\sum_{i=1}^{i_0}\int_{D_-\cap\partial_{i}\bar{\mathbf{M}}_T}
\left(u_{\delta,T}\partial_{\bar{\nu}}u_{\delta,T}
+\frac{1}{4}X(\bar{\nu})u_{\delta,T}^2\right)d\bar{A}_T\\
\leq & \int_{D_-}\frac{1}{8}K_{\delta -}u_{\delta,T}^2 dV_{\bar{\mathbf{g}}_{\delta}}
-\sum_{i=1}^{i_0}\frac{1}{4}\int_{D_-\cap\partial_{i}\bar{\mathbf{M}}_T}
\left(\sqrt{\frac{16\pi}{|\partial_i \bar{\mathbf{M}}_T|_{\tilde{\mathbf{g}}_{\delta}}}}u^4_{\delta,T}
+\left(X(\bar{\nu})-\bar{H}\right)u_{\delta,T}^2\right)d\bar{A}_T\\
\leq &\delta^{2/3}C_0 \int_{D_-}|\nabla u_{\delta,T}|^2_{\bar{\mathbf{g}}_{\delta}}
dV_{\bar{\mathbf{g}}_{\delta}}
\end{split}
\end{align}
Selecting $\delta$ sufficiently small then produces the contradiction
\begin{equation}\label{kjh}
\int_{D_-}|\nabla u_{\delta,T}|^2_{\bar{\mathbf{g}}_{\delta}}
dV_{\bar{\mathbf{g}}_{\delta}}\leq 0.
\end{equation}
Therefore $u_{\delta,T}\geq 0$, and the strict positivity follows from Hopf's maximum principle.

It remains to show that $u_{\delta,T}$ does not identically vanish on $\partial\tilde{\mathbf{M}}_{T}$.
Following \cite[Proposition 3.2]{Herzlich} consider the linear boundary value problem
\begin{align}
\begin{split}
&\Delta_{\bar{\mathbf{g}}_{\delta}}z
-\frac{1}{8}\left(\bar{\mathbf{R}}_{\delta}
-2|\bar{\mathbf{E}}_{\delta}|^{2}_{\bar{\mathbf{g}}_{\delta}}
-|\bar{\mathbf{k}}_{\delta}|^{2}_{\bar{\mathbf{g}}_{\delta}}\right)z=0
\quad\text{ on }\quad\tilde{\mathbf{M}}_{T},\\
&z \rightarrow 0\quad\quad\text{ as }\quad\quad|x|\to\infty,\\
&\partial_{\bar{\nu}}z
=-1,\quad\quad\text{ on }\quad\quad \partial_i \bar{\mathbf{M}}_T,\quad\quad i=1,\ldots,i_0,\\
&z\rightarrow 0\quad\quad\text{ as }\quad\quad x\rightarrow\partial_{i}\bar{\mathbf{M}},\quad\quad i=i_0+1,\ldots,I.
\end{split}
\end{align}
The methods used to establish \eqref{fu} can be applied here to show that this boundary value problem has no kernel, and hence a unique smooth solution exists. Furthermore, in analogy with \eqref{kjh} it can be shown that $z$ is strictly positive. In particular, expanding in spherical harmonics in the asymptotically flat end yields $z = a/r +O_1(r^{-2})$ for some constant $a>0$. Suppose that $u_{\delta,T}=1+v_{\delta,T}$ vanishes identically on $\partial\tilde{\mathbf{M}}_{T}$, then
\begin{align}
\begin{split}
&P(v_{\delta,T} +\varepsilon z)-P(v_{\delta,T})\\
=& \frac{1}{2}\int_{\tilde{\mathbf{M}}_T}\left(|\nabla(v_{\delta,T}+\varepsilon z)|^2_{\bar{\mathbf{g}}_{\delta}}-|\nabla v_{\delta,T}|^2_{\bar{\mathbf{g}}_{\delta}}\right) dV_{\bar{\mathbf{g}}_{\delta}}+O(\varepsilon^2)\\
&+\frac{1}{16}\int_{\tilde{\mathbf{M}}_T}\left(\bar{\mathbf{R}}_{\delta}
-2|\bar{\mathbf{E}}_{\delta}|^{2}_{\bar{\mathbf{g}}_{\delta}}
-|\bar{\mathbf{k}}_{\delta}|^{2}_{\bar{\mathbf{g}}_{\delta}}\right)
\left((1+v_{\delta,T}+\varepsilon z)^2-(1+v_{\delta,T})^2\right)dV_{\bar{\mathbf{g}}_{\delta}}\\
=&\int_{\tilde{\mathbf{M}}_T}\varepsilon(1+v_{\delta,T})
\left(-\Delta_{\bar{\mathbf{g}}_{\delta}}z
+\frac{1}{8}\left(\bar{\mathbf{R}}_{\delta}
-2|\bar{\mathbf{E}}_{\delta}|^{2}_{\bar{\mathbf{g}}_{\delta}}
-|\bar{\mathbf{k}}_{\delta}|^{2}_{\bar{\mathbf{g}}_{\delta}}\right)z\right)
dV_{\bar{\mathbf{g}}_{\delta}}\\
&+\int_{S_{\infty}}\varepsilon(1+v_{\delta,T})\partial_r z +O(\varepsilon^2)\\
=&-4\pi\varepsilon a +O(\varepsilon^2),
\end{split}
\end{align}
where $S_{\infty}$ represents the limit of coordinate spheres in the asymptotic end as $r\rightarrow\infty$. For $\varepsilon$ small enough this yields $P(v_{\delta,T}+\varepsilon z)<P(v_{\delta,T})$, which contradicts the minimizing property of $v_{\delta,T}$.
\end{proof}

The solution $u_{\delta,T}$ of \eqref{equation} will be used to obtain lower bounds for the mass $\bar{\mathbf{m}}$ of the glued manifold $(\bar{\mathbf{M}},\bar{\mathbf{g}})$. Expanding in spherical harmonics gives the expression
\begin{equation}
u_{\delta,T}=1+\frac{\mathcal{A}_{\delta,T}}{|x|}+O(|x|^{-2})\quad\quad\text{ as }\quad|x|\rightarrow\infty,
\end{equation}
for some constant $\mathcal{A}_{\delta,T}$. The mass of $\tilde{\mathbf{g}}_{\delta,T}=u_{\delta,T}^4 \bar{\mathbf{g}}_{\delta}$ is then given by
\begin{equation}\label{PAjj}
\tilde{\mathbf{m}}_{\delta,T}=\bar{\mathbf{m}}+2\mathcal{A}_{\delta,T}.
\end{equation}
Moreover
\begin{equation}\label{PA} P(v_{\delta,T})=\frac{1}{2}\int_{S_{\infty}}u_{\delta,T}\partial_{r}u_{\delta,T}
=-2\pi\mathcal{A}_{\delta,T},
\end{equation}
so that lower bounds for $\bar{\mathbf{m}}$ may be obtained from lower bounds for $P(v_{\delta,T})$ and $\tilde{\mathbf{m}}_{\delta,T}$. In order to facilitate this we note that the sequence of solutions to \eqref{equation} subconverges to a solution of
\begin{align}\label{equation1}
\begin{split}
&\Delta_{\bar{\mathbf{g}}}u
-\frac{1}{8}\left(\bar{\mathbf{R}}
-2|\bar{\mathbf{E}}|^{2}_{\bar{\mathbf{g}}}
-|\bar{\mathbf{k}}|^{2}_{\bar{\mathbf{g}}}\right)u=0
\quad\text{ on }\quad\bar{\mathbf{M}},\\
&u \rightarrow 1\quad\text{ as }\quad|x|\to\infty,\quad\quad
u\rightarrow 0\quad\text{ as }\quad x\rightarrow\partial_{i}\bar{\mathbf{M}},\quad\quad i=1,\ldots,I.
\end{split}
\end{align}
Although the coefficients of this equation are not continuous, the divergence structure present in the scalar curvature will help to mitigate the singular behavior of the solution.

\begin{lemma}\label{limitsolution}
There exists a subsequence $\{u_{\delta_l,T_l}\}_{l=0}^{\infty}$ of the solutions produced in Proposition \ref{solution} which converges in $C^0(\bar{\mathbf{M}})\cap C^{2}(\bar{\mathbf{M}}\setminus\Sigma)$, and weakly in $W^{1,2}_{loc}(\bar{\mathbf{M}})$, to a positive solution $u\in C^{0,\alpha}(\bar{\mathbf{M}})\cap C^{\infty}(\bar{\mathbf{M}}\setminus\Sigma)$ of \eqref{equation1} for some $\alpha\in(0,1)$. Furthermore $u$ is not constant on $\Omega$.
\end{lemma}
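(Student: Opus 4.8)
The plan is to derive a priori estimates uniform in both $\delta$ and $T$, extract a convergent subsequence, identify the limit as a positive solution of \eqref{equation1}, and then read off non-constancy from the boundary values. Since each $u_{\delta,T}$ minimizes the functional $P$, we have $P(v_{\delta,T})\le P(0)$ uniformly in $(\delta,T)$, so the coercivity estimate \eqref{fu} furnishes uniform $W^{1,2}_{loc}(\bar{\mathbf{M}})$ bounds. Away from the corner surface $\Sigma$ the smoothed data $(\bar{\mathbf{g}}_\delta,\bar{\mathbf{E}}_\delta,\bar{\mathbf{k}}_\delta)$ converge in $C^\infty_{loc}$ to $(\bar{\mathbf{g}},\bar{\mathbf{E}},\bar{\mathbf{k}})$, so interior Schauder estimates give uniform $C^{2,\alpha}_{loc}(\bar{\mathbf{M}}\setminus\Sigma)$ bounds and hence, by Arzel\`{a}--Ascoli, $C^2_{loc}$-subconvergence off $\Sigma$.

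The essential difficulty, and the step I expect to be the main obstacle, is uniform control in a fixed neighborhood of $\Sigma$, where $\bar{\mathbf{R}}_\delta$ develops a spike of height of order $\delta^{-2}(H_--H_+)$ that is unbounded in $L^p$ for every $p\ge\tfrac32$, so no estimate relying on merely integrable coefficients applies. Here I would exploit the divergence structure of the scalar curvature. By the proof of Lemma \ref{gluing} the singular part of $\bar{\mathbf{R}}_\delta$ is exactly cancelled by $2\operatorname{div}_{\bar{\mathbf{g}}_\delta}\mathbf{X}_\delta$, its concentration stemming only from the bounded jump of the normal component $\mathbf{X}_{\delta t}$; in particular $\mathbf{X}_\delta$ stays uniformly bounded in $L^\infty$ while $\bar{\mathbf{R}}_\delta+2\operatorname{div}_{\bar{\mathbf{g}}_\delta}\mathbf{X}_\delta=O(1)$. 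Moving this divergence term across in \eqref{equation} recasts it as
\begin{equation}
\operatorname{div}_{\bar{\mathbf{g}}_\delta}\!\left(\nabla u_{\delta,T}+\tfrac14 u_{\delta,T}\mathbf{X}_\delta\right)
=\tfrac14\,\mathbf{X}_\delta\cdot\nabla u_{\delta,T}+b_\delta\,u_{\delta,T},
\end{equation}
a divergence-form equation whose coefficients ($\mathbf{X}_\delta$ and a uniformly bounded function $b_\delta$ absorbing all bounded zeroth-order terms) are controlled independently of $\delta$ and $T$. De Giorgi--Nash--Moser theory then delivers a uniform $C^{0,\alpha}$ bound and a Harnack inequality across $\Sigma$; together with the bounds off $\Sigma$ and the uniform behavior at the ends this yields a uniform $C^{0,\alpha}(\bar{\mathbf{M}})$ estimate.

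Armed with these bounds, a diagonal subsequence $u_{\delta_l,T_l}$ converges in $C^0(\bar{\mathbf{M}})$, in $C^2_{loc}(\bar{\mathbf{M}}\setminus\Sigma)$, and weakly in $W^{1,2}_{loc}(\bar{\mathbf{M}})$ to some $u\in C^{0,\alpha}(\bar{\mathbf{M}})\cap C^\infty(\bar{\mathbf{M}}\setminus\Sigma)$. Passing to the limit in the divergence-form weak formulation, using $\mathbf{X}_\delta\to\mathbf{X}$ in $L^p_{loc}$ together with the weak $W^{1,2}_{loc}$ convergence, shows that $u$ solves \eqref{equation1} weakly, and the smoothness of the data off $\Sigma$ upgrades this to a classical $C^\infty$ solution there. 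The asymptotic condition $u\to 1$ persists from the uniform $O(|x|^{-1})$ decay of $u_{\delta,T}-1$ in the asymptotically flat end. On the horizon ends the Dirichlet condition $u\to 0$ is inherited directly for $i=i_0+1,\ldots,I$; for $i=1,\ldots,i_0$ it follows by examining the $T\to\infty$ limit of the Robin condition on the receding leaves $\partial_i\bar{\mathbf{M}}_T$ against the cylindrical model, which forces selection of the solution decaying down the end.

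Finally, strict positivity of $u$ on the interior follows from the Harnack inequality (its uniform version near $\Sigma$, and the standard one elsewhere) together with $u\to 1$ at infinity. For non-constancy on $\Omega$, recall that the Jang region $\bar{\Omega}$ contains the asymptotically cylindrical ends produced by blow-up at $\Sigma_h$, along which $u\to 0$ by the boundary conditions just established, whereas $u>0$ in the interior. Were $u$ constant on $\bar{\Omega}$ it would have to equal its limiting value $0$ at these ends, contradicting positivity; hence $u$ is not constant on $\Omega$.
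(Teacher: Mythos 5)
Your overall architecture --- uniform $W^{1,2}_{loc}$ bounds, interior Schauder off $\Sigma$, De Giorgi--Nash--Moser and Harnack across $\Sigma$ via the divergence structure \eqref{divergencestructure}, identification of the weak limit, and the non-constancy argument from the decay along the cylindrical ends versus $u\to 1$ at infinity --- coincides with the paper's proof. However, there is a genuine gap at the very first step, and everything downstream depends on it. You assert that $P(v_{\delta,T})\leq P(0)$ gives a bound uniform in $(\delta,T)$. First, $v=0$ is generally not in the admissible class $\mathcal{W}$: the definition requires $1+v\in W^{1,2}\bigl(\tilde{\mathbf{M}}_T\setminus\bar{\mathbf{M}}_{T_0}\bigr)$, and the constant $1$ fails to be square-integrable on the infinite cylindrical ends $i=i_0+1,\ldots,I$ that are retained in $\tilde{\mathbf{M}}_T$. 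Second, even if one restricts attention to the truncated ends $i\leq i_0$, the quantity $P(0)$ is \emph{not} uniformly bounded in $T$: on the Jang graph one has $\bar{R}-2|\bar{E}|^2-|\bar{k}|^2=2(\mu-J(w))+2|X|^2-2\operatorname{div}X$, and since the dominant energy condition is assumed strict on horizons, $\mu-J(w)\geq c>0$ along the asymptotically cylindrical ends; integrating this over a cylinder of length of order $T$ makes $P(0)$ grow without bound as $T\to\infty$. So the comparison with the zero competitor does not deliver the uniform energy estimate you need.

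The paper closes this gap by a substantive argument you omit. It first proves $\tilde{\mathbf{m}}_{\delta,T}\geq 0$: the conformal metric $\tilde{\mathbf{g}}_{\delta,T}$ has nonnegative scalar curvature by \eqref{conformalscalar}, the Robin boundary condition in \eqref{equation} is engineered precisely so that each finite boundary component has zero Hawking mass, and the exponential decay of $u_{\delta,T}$ conformally closes the remaining cylindrical ends, so inverse mean curvature flow started from either type of inner boundary yields nonnegativity of the ADM mass. Combining this with the identities \eqref{PAjj} and \eqref{PA}, namely $P(v_{\delta,T})=-2\pi\mathcal{A}_{\delta,T}=\pi\bigl(\bar{\mathbf{m}}-\tilde{\mathbf{m}}_{\delta,T}\bigr)$, gives the $(\delta,T)$-independent bound $P(v_{\delta,T})\leq\pi\bar{\mathbf{m}}$, and only then does the coercivity estimate \eqref{fu} produce the uniform $W^{1,2}_{loc}$ control on which your subsequent (otherwise correct) compactness, regularity, positivity, and non-constancy arguments rest. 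Without this mass nonnegativity step, or some substitute such as testing $P$ against a competitor that is cut off to $-1$ down the cylindrical ends, your proof does not get off the ground.
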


\begin{proof}
First observe that $\tilde{\mathbf{m}}_{\delta,T}\geq 0$. To see this note that $\tilde{\mathbf{g}}_{\delta,T}$ is of nonnegative scalar curvature from \eqref{conformalscalar}, and according to the boundary condition of \eqref{equation} each component of $\partial\tilde{\mathbf{M}}_{T}$ has zero Hawking mass. Furthermore, a strict energy condition at the apparent horizon implies that $u_{\delta,T}$ decays exponentially along the asymptotically cylindrical ends of $\tilde{\mathbf{M}}_{T}$ (see \cite{SchoenYauII}). Therefore the asymptotically cylindrical ends are conformally closed, and one could either start the inverse mean curvature flow at such a closed end or at a boundary component of zero Hawking mass in order to find nonnegativity of the mass.

Equation \eqref{PA} now yields the upper bound $P(v_{\delta,T})\leq \pi\bar{\mathbf{m}}$, which is independent of $\delta$ and $T$. We may then use
\eqref{fu} to find
\begin{equation}
\int_{\tilde{\mathbf{M}}_T}\left(|\nabla u_{\delta,T}|^2_{\bar{\mathbf{g}}}+\xi_{T_0}u_{\delta,T}^2
+\frac{(1-\xi_{T_0})}{r^{2}}(u_{\delta,T}-1)^2\right)dV_{\bar{\mathbf{g}}}
\leq C\bar{\mathbf{m}},
\end{equation}
which yields a uniform $W^{1,2}$ estimate on compact subsets. By taking an exhausting sequence of domains and using a diagonal argument, a subsequence $u_{\delta_l,T_l}$ is obtained which converges weakly in $W^{1,2}_{loc}(\bar{\mathbf{M}})$ to $u$. Next, consider the divergence structure present in \eqref{equation}, namely this equation may be rewritten as
\begin{equation}\label{divergencestructure}
\text{div}_{\bar{\mathbf{g}}_{\delta}}\left(
\nabla u_{\delta,T}
+\frac{1}{4}\mathbf{X}_{\delta} u_{\delta,T}\right)
-\frac{1}{4}\mathbf{X}_{\delta}\cdot\nabla u_{\delta,T}
-\frac{1}{8}\left(K_{\delta}-2|\mathbf{X}_{\delta}|^2_{\bar{\mathbf{g}}_{\delta}}\right)
u_{\delta,T}=0.
\end{equation}
Since the coefficients $\mathbf{X}_{\delta}$ and $K_{\delta}$ are uniformly bounded, this allows for a notion of weak solution in which the coefficients converge in $L^{2}_{loc}$. The weak convergence $u_{\delta_l,T_l}\rightharpoonup u$ then implies that $u\in W^{1,2}_{loc}(\bar{\mathbf{M}})$ is a weak solution of \eqref{equation1}. Standard elliptic regularity guarantees that $u\in C^{\infty}(\bar{\mathbf{M}}\setminus\Sigma)$, and also that the convergence is in the $C^2$-topology on $\bar{\mathbf{M}}\setminus\Sigma$.

Refined regularity for the limit can be obtained as follows. Let $D\subset D'$ be compact in $\bar{\mathbf{M}}$. The divergence structure of \eqref{divergencestructure} allows for an application of the De Giorgi-Nash-Moser estimate \cite[Theorem 8.24]{GilbargTrudinger}
\begin{equation}\label{degiorgi}
\parallel u_{\delta,T}\parallel_{C^{0,\alpha}(D)}\leq C\parallel u_{\delta,T}\parallel_{L^{2}(D')},
\end{equation}
where $\alpha\in(0,1)$ and $C$ are independent of $\delta$ and $T$. Since the relevant subsequence of $u_{\delta,T}$ converges weakly in $W^{1,2}$, it converges strongly in $L^2$, and hence \eqref{degiorgi} implies that the limit $u\in C^{0,\alpha}$. In particular $u$ is uniformly bounded along the asymptotically cylindrical ends of $\bar{\mathbf{M}}$, which implies exponential decay to zero and shows that it satisfies the asymptotic boundary condition of \eqref{equation1}.

Lastly it will be shown that $u$ is strictly positive and nonconstant on $\Omega$. As the coefficients in the divergence structure \eqref{divergencestructure} are uniformly bounded, the Harnack inequality \cite[Corollary 8.21]{GilbargTrudinger} holds with a constant $C$ independent of $\delta$ and $T$, that is
\begin{equation}
\sup_{D} u_{\delta,T}\leq C\inf_{D} u_{\delta,T}.
\end{equation}
Since $u_{\delta,T}$ subconverges to $u$ in $C^0$, this same inequality holds for $u$. Thus if $u$ vanishes somewhere it must vanish everywhere, which contradicts the fact that $u\rightarrow 1$ as $|x|\rightarrow\infty$, so that $u$ is positive. Lastly if $u$ is constant in $\Omega$ then it must be zero there in light of the exponential decay along the asymptotically cylindrical ends, but this again contradicts the behavior in the asymptotically flat end.
\end{proof}

Recall that in the setting of Theorems \ref{thm2.7}-\ref{thm2.12}, the boundary of $\Omega$ consists of two parts $\partial\Omega=\Sigma_h \cup\Sigma$, an inner horizon piece and an outer untrapped piece. We are now in a position to obtain a lower bound for the mass $\bar{\mathbf{m}}$ in terms of the horizon area. To do this we define the following constant that appears in the above mentioned theorems
\begin{equation}\label{definitiongamma}
\gamma=\left(\sum_{i=1}^I\sqrt{4\pi|\Sigma_h^i|}\right)^{-1}
\parallel \nabla u\parallel_{L^2(\bar{\Omega},\bar{g})}^2.
\end{equation}
According to Lemma \ref{limitsolution} this constant is nonzero and finite, and is invariant under rescalings of the metric.

\begin{proposition}\label{lemma5.5}
Under the hypotheses of Theorems \ref{thm2.7}-\ref{thm2.12}
\begin{equation}
\bar{\mathbf{m}}\geq \lim_{l\to\infty}\tilde{\mathbf{m}}_{\delta_l,T_l}+\frac{\gamma}{1+\gamma}
\sum_{i=1}^{I}\sqrt{\frac{|\Sigma_{h}^i|}{4\pi}},
\end{equation}
\end{proposition}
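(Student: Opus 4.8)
The plan is to turn the asserted mass bound into a lower bound for the conformal energy $P$ evaluated at the minimizer, and then to extract the horizon area from the free boundary term in $P$ by a capacity-type balancing. Combining the expansion \eqref{PAjj} with the flux identity \eqref{PA} gives, for every admissible pair $(\delta,T)$, the exact relation $\bar{\mathbf{m}}=\tilde{\mathbf{m}}_{\delta,T}+\tfrac{1}{\pi}P(v_{\delta,T})$. Since $\tilde{\mathbf{m}}_{\delta,T}\geq 0$ was already established at the start of the proof of Lemma \ref{limitsolution}, evaluating along the subsequence $(\delta_l,T_l)$ produced there reduces the proposition to the single estimate
\begin{equation}
\lim_{l\to\infty}P(v_{\delta_l,T_l})\geq \pi\,\frac{\gamma}{1+\gamma}\sum_{i=1}^{I}\sqrt{\frac{|\Sigma_h^i|}{4\pi}}.
\end{equation}
By the convergence in Lemma \ref{limitsolution} the left-hand side equals $\tfrac12\int_{S_\infty}u\,\partial_r u$ computed from the limit solution $u$ of \eqref{equation1}.

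First I would express $P$ intrinsically. Applying the divergence theorem to $\operatorname{div}(u\nabla u)$ over $\bar{\mathbf{M}}$, and using that $u$ decays exponentially along the conformally closed cylindrical ends together with the vanishing of $\bar{\mathbf{R}}-2|\bar{\mathbf{E}}|^2-|\bar{\mathbf{k}}|^2$ on the extension $\bar{M}_+$, yields the identity
\begin{equation}
2\lim_{l\to\infty}P(v_{\delta_l,T_l})=\int_{\bar{\mathbf{M}}}|\nabla u|^2\,dV
+\frac{1}{8}\int_{\bar{\Omega}}\left(\bar{\mathbf{R}}-2|\bar{\mathbf{E}}|^2-|\bar{\mathbf{k}}|^2\right)u^2\,dV,
\end{equation}
to be interpreted weakly across the corner. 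The interior curvature integral is then controlled by the smoothed stability inequality \eqref{6yj}: substituting $\bar{\mathbf{R}}-2|\bar{\mathbf{E}}|^2-|\bar{\mathbf{k}}|^2\geq 2|X|^2-2\operatorname{div}X$, integrating the divergence of $X$ by parts against $u^2$, and completing the square in $\nabla u+\tfrac14 Xu$ converts the $X$-contributions into a nonnegative bulk term plus a boundary integral of $X(\bar\nu)u^2$ over $\Sigma$ and the horizon cross-sections. After this reduction the estimate for $\lim P$ is governed by the interior Dirichlet energy $D:=\|\nabla u\|_{L^2(\bar{\Omega},\bar{g})}^2$ entering the definition \eqref{definitiongamma} of $\gamma$, together with the free boundary terms of $P$ localized at the horizons.

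The horizon area enters through those free boundary terms. By construction the Robin condition in \eqref{equation} forces each boundary $\partial_i\bar{\mathbf{M}}_T$, $i\le i_0$, to have vanishing Hawking mass with $|\partial_i\bar{\mathbf{M}}_T|\to|\Sigma_h^i|$ and $\bar{H}-X(\bar\nu)\to 0$ as $T\to\infty$, so that the term $\tfrac{\sqrt\pi}{2}(\int_{\partial_i}u^4)^{1/2}$ appearing in $P$ produces, after the factor $\tfrac1\pi$ and Jensen's inequality \eqref{Jensen}, precisely $\sqrt{|\Sigma_h^i|/4\pi}$ times the squared boundary value of $u$. Working at finite $T$, I would then minimize the resulting expression $\tfrac12\int|\nabla u|^2+\tfrac{\sqrt\pi}{2}\sum_i(\int_{\partial_i}u^4)^{1/2}$ over the boundary value of $u$: the optimum splits the potential drop between the interior region (capacity proportional to $D$) and the area boundary (proportional to $\sqrt{|\Sigma_h^i|}$) in a series-capacitor fashion, which is exactly what produces the factor $\tfrac{\gamma}{1+\gamma}$ with $\gamma=D/\sum_i\sqrt{4\pi|\Sigma_h^i|}$. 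Letting $T\to\infty$ and $\delta\to0$ along the subsequence then gives the displayed bound, and scale invariance of $\gamma$ is immediate from \eqref{definitiongamma}.

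The hard part will be the interplay between the low regularity at the corner $\Sigma$ and the degeneration of the area-encoding boundary condition in the limit. Since $u$ is only $C^{0,\alpha}$ across $\Sigma$ and the glued metric carries a genuine corner there, the scalar curvature $\bar{\mathbf{R}}$ contains a distributional contribution supported on $\Sigma$, so every flux integration across $\Sigma$ used above must be justified through the divergence structure \eqref{divergencestructure}, matching the jump in the normal derivative of $u$ to the $X(\bar\nu)$ boundary term. Equally delicate is that the Robin boundary condition carrying the factor $\sqrt{\pi/|\partial_i\bar{\mathbf{M}}_T|}$ collapses to the Dirichlet condition $u\to 0$ in the limit equation \eqref{equation1}; consequently the horizon-area contribution cannot be read directly from the limit but must be tracked quantitatively at finite $T$ before passing to the limit, where the bulk of the technical work---uniform control of the boundary values of $u_{\delta,T}$ and of the areas $|\partial_i\bar{\mathbf{M}}_T|$---will be required.
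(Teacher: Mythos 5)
Your plan follows the paper's proof essentially step for step: the same reduction $\bar{\mathbf{m}}=\tilde{\mathbf{m}}_{\delta,T}+\pi^{-1}P(v_{\delta,T})$ from \eqref{PAjj}--\eqref{PA}, the same lower bound for $P$ by the interior Dirichlet energy plus the horizon boundary terms after absorbing the $\mathbf{X}$-contributions by completing the square, and the same $\gamma/(1+\gamma)$ series-capacitor balancing (the paper cites Section 3 of \cite{Khuri} for exactly this step, leading to \eqref{Pin3}). The one substantive item you defer --- the quantitative verification that $\int_{\partial_i\bar{\mathbf{M}}_{T_l}}u_{\delta_l,T_l}^2\,d\bar{A}_{T_l}\to 0$, hence $\int_{\partial_i\bar{\mathbf{M}}_{T_l}}v_{\delta_l,T_l}^2\,d\bar{A}_{T_l}\to|\Sigma_h^i|$ --- is precisely what the paper supplies in \eqref{11112}--\eqref{gthy} via an integration by parts along the cylindrical neck and the exponential decay of the limit solution $u$, so your identification of where the technical work lies is accurate.
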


\begin{proof}
From \eqref{PAjj} and \eqref{PA} we have
\begin{equation}
\bar{\mathbf{m}}=\tilde{\mathbf{m}}_{\delta,T}+\frac{1}{\pi}P(v_{\delta,T}).
\end{equation}
Moreover \eqref{Pin1} and \cite[Lemma 2.2]{Khuri} imply that
\begin{equation}
P(v_{\delta,T})\geq \frac{1}{4}\int_{\tilde{\mathbf{M}}_T}
|\nabla u_{\delta,T}|^2_{\bar{\mathbf{g}}_{\delta}}
dV_{\bar{\mathbf{g}}_{\delta}}
+\left(\frac{1-\vartheta_T}{2}\right)\sum_{i=1}^{I}\sqrt{\frac{\pi}{|\partial_i \bar{\mathbf{M}}_T|}}
\int_{\partial_i\bar{\mathbf{M}}_T}u_{\delta,T}^2
d\bar{A}_{T}.
\end{equation}
Following \cite[Section 3]{Khuri} then yields
\begin{equation}\label{Pin3}
P(v_{\delta,T})\geq\frac{\gamma_{\delta,T}(1-\vartheta_{T})}
{2\left(1+\gamma_{\delta,T}\right)}\sum_{i=1}^I
\sqrt{\pi|\partial_i\bar{\mathbf{M}}_T|_{\bar{\mathbf{g}}_{\delta}}},
\end{equation}
where
\begin{equation}
\gamma_{\delta,T}=\frac{\int_{\bar{\mathbf{M}}_T\cap \Omega}|\nabla u_{\delta,T}|^2_{\bar{\mathbf{g}}_{\delta}}dV_{\bar{\mathbf{g}}_{\delta}}}
{2\left(1-\vartheta_{T}\right)\sum_{i=1}^I
\sqrt{\tfrac{\pi}{|\partial_i\bar{\mathbf{M}}_T|_{\bar{\mathbf{g}}_{\delta}}}}
\int_{\partial_i\bar{\mathbf{M}}_T}v_{\delta,T}^2 d\bar{A}_{T}}.
\end{equation}
Suppose that
\begin{equation}\label{lop}
\int_{\partial_i\bar{\mathbf{M}}_{T_l}}v_{\delta_l,T_l}^2 d\bar{A}_{T_l}\rightarrow
|\Sigma_h^i|,
\end{equation}
where $u_{\delta_l,T_l}$ is the subsequence that converges to $u$ weakly in $W^{1,2}$. Since the Hilbert space norm is weakly lower semicontinuous, we then find that
\begin{equation}
\lim_{l\rightarrow\infty}\gamma_{\delta_l,T_l}\geq\gamma,
\end{equation}
where we have also used that $|\partial_i\bar{\mathbf{M}}_{T_l}|_{\bar{\mathbf{g}}_{\delta_l}}\rightarrow|\Sigma_h^i|$.
Furthermore, since the function $\gamma\rightarrow\tfrac{\gamma}{1+\gamma}$ is monotonically nondecreasing the desired result follows.

It remains to show that \eqref{lop} is valid. Along the asymptotic ends $(\bar{\mathbf{M}}\setminus\bar{\mathbf{M}}_{T})_i$, $i=i_0 +1,\ldots,I$ this is clear as $u_{\delta,T}\rightarrow 0$ along these ends. For the remaining ends, choose $T_0<T$ and perform the integration by parts, on $\bar{\mathbf{M}}_{T}\setminus\bar{\mathbf{M}}_{T_0}$, that led to \eqref{7890} to obtain
\begin{align}\label{11112}
\begin{split}
&\int_{\left(\bar{\mathbf{M}}_{T}\setminus\bar{\mathbf{M}}_{T_0}\right)_i}
\frac{1}{4}|\nabla u_{\delta,T}|^2_{\bar{\mathbf{g}}_{\delta}}
dV_{\bar{\mathbf{g}}_{\delta}}
+\left(\int_{\partial_{i}\bar{\mathbf{M}}_T}\pi u_{\delta,T}^4 d\bar{A}_{T}\right)^{1/2}\\
\leq &
\int_{\partial_{i}\bar{\mathbf{M}}_{T_0}}
\left(u_{\delta,T}\partial_{\bar{\nu}}u_{\delta,T}
+\frac{1}{4}X(\bar{\nu})u_{\delta,T}^2\right)d\bar{A}_{T_0}
+\frac{1}{4}\int_{\partial_{i}\bar{\mathbf{M}}_T}
\left(\bar{H}-X(\bar{\nu})\right)u_{\delta,T}^2 d\bar{A}_T.
\end{split}
\end{align}
Since $|\bar{H}|+|X(\bar{\nu})|\rightarrow 0$ as $T\rightarrow \infty$ for $i=1,\ldots,i_0$, Jensen's inequality implies that
\begin{equation}\label{gthy}
\int_{\partial_{i}\bar{\mathbf{M}}_T} u_{\delta,T}^2 d\bar{A}_{T}
\leq C\int_{\partial_{i}\bar{\mathbf{M}}_{T_0}}
\left(u_{\delta,T}|\partial_{\bar{\nu}}u_{\delta,T}|
+|X(\bar{\nu})|u_{\delta,T}^2\right)d\bar{A}_{T_0}
\end{equation}
where $C$ is independent of $T$. Since $u_{\delta_l,T_l}\rightarrow u$ smoothly on $\partial_{i}\bar{\mathbf{M}}_{T_0}$, and $u$ along with its derivatives have exponential decay along the asymptotically cylindrical ends, the right-hand side of \eqref{gthy} can be made arbitrarily small for the sequence $u_{\delta_l,T_l}$ by letting $l\rightarrow\infty$ and choosing $T_0$ sufficiently large. The desired conclusion \eqref{lop} now follows.
\end{proof}

\section{Penrose-Like Inequalities for the Liu-Yau and Wang-Yau Masses}
\label{sec7}\setcounter{equation}{0}
\setcounter{section}{7}

\begin{proof}[Proof of Theorem \ref{thm2.7}]
As explained in Section \ref{sec3}, the Liu-Yau mass is bounded below by the mass of the glued manifold. Therefore Proposition \ref{lemma5.5} implies that
\begin{equation}\label{tytyty}
m_{LY}(\Sigma)\geq \bar{\mathbf{m}}
\geq\lim_{l\to\infty}\tilde{\mathbf{m}}_{\delta_l,T_l}+\frac{\gamma}{1+\gamma}
\sum_{i=1}^{I}\sqrt{\frac{|\Sigma_{h}^i|}{4\pi}}
\geq \frac{\gamma}{1+\gamma}
\sqrt{\frac{|\Sigma_{h}|}{4\pi}},
\end{equation}
where we have used the fact that $\tilde{\mathbf{m}}_{\delta,T}\geq 0$ as explained in the proof of Lemma \ref{limitsolution}; this establishes \eqref{LY1in}. In order to show \eqref{LY2in}, use the above sequence of
inequalities combined with the area-angular momentum-charge inequality
\cite{BrydenKhuri,ClementJaramilloReiris} which holds under the state hypotheses for stable apparent horizons:
\begin{equation}
|\Sigma_h^i|\geq4\pi\sqrt{(Q_h^i)^4+4(\mathcal{J}_h^i)^2}.
\end{equation}
\end{proof}

\begin{proof}[Proof of Theorem \ref{thm2.9}]
We will estimate $\tilde{\mathbf{m}}_{\delta_l,T_l}$ from below and apply
\eqref{tytyty} to achieve \eqref{LY3in}. The estimate of the conformal mass will be via inverse mean curvature flow. First observe that $\Omega$ is simply connected. To see this, note that the assumption of no interior apparent horizons (including no immersed MOTS) together with an untrapped outer boundary $\Sigma$ (of spherical topology) allows for an application of Theorem 5.1 of \cite[Theorem 5.1]{EichmairGallowayPollack} which yields the desired conclusion. Consider a weak IMCF $\{\tilde{S}_{t}^{\delta,T}\}_{t=0_*}^{\infty}$ within $(\tilde{\mathbf{M}}_{T},\tilde{\mathbf{g}}_{\delta,T})$. This manifold has either a single component inner boundary of zero Hawking mass, or the `point at infinity' in the conformally closed remnant of an asymptotically cylindrical end, from which the IMCF will emanate. In the former case $0_*=0$ while in the latter case $0_* =-\infty$. By simple connectivity each leaf of the flow is connected \cite[Lemma 4.2]{HuiskenIlmanen}. Furthermore the leaf of largest area contained within $\Omega$, and outside of the $\delta$-tubular neighborhood of $\Sigma$ where $\bar{\mathbf{g}}_{\delta}\neq\bar{\mathbf{g}}$, will be denoted by $\tilde{S}_{\tilde{t}_{0}}^{\delta,T}$. Observe that the flow is smooth for a nonzero amount of time within $\Omega$, in other words it does not instantaneously jump from the initial time to a surface intersecting $\Sigma$ or lying outside of $\Omega$. This is due to the fact that the flow is by outermost minimal area enclosures, and the initial surface has area $|\tilde{S}_{0_*}^{\delta,T}|_{\tilde{\mathbf{g}}_{\delta,T}}$ which can be made arbitrarily small compared to $|\Sigma|_{\tilde{\mathbf{g}}_{\delta,T}}$ when $0_*=0$ as a result of \eqref{11112}, \eqref{gthy},
while $|\tilde{S}_{0_*}^{\delta,T}|_{\tilde{\mathbf{g}}_{\delta,T}}=0$ when $0_*=-\infty$. According to Geroch monotonicity \cite{HuiskenIlmanen} and the scalar curvature formula \eqref{conformalscalar} we have
\begin{align}\label{102938}
\begin{split}
\tilde{\mathbf{m}}_{\delta,T}\geq&\frac{1}{(16\pi)^{3/2}}
\int_{0_*}^{\infty}\int_{\tilde{S}_t^{\delta,T}}
\sqrt{|\tilde{S}_t^{\delta,T}|_{\tilde{\mathbf{g}}_{\delta,T}}} \tilde{\mathbf{R}}_{\delta,T}
d\tilde{A}_{t}dt\\
\geq&\frac{1}{(16\pi)^{3/2}}
\int_{0_*}^{\tilde{t}_0}\int_{\tilde{S}_t^{\delta,T}}
\sqrt{|\tilde{S}_t^{\delta,T}|_{\tilde{\mathbf{g}}_{\delta,T}}} \left(2|\tilde{\mathbf{E}}_{\delta,T}|_{\tilde{\mathbf{g}}_{\delta,T}}^2
+|\tilde{\mathbf{k}}_{\delta,T}|_{\tilde{\mathbf{g}}_{\delta,T}}^2\right)
d\tilde{A}_{t}dt\\
=&\frac{1}{(16\pi)^{3/2}}
\int_{0_*}^{\tilde{t}_0}\int_{\tilde{S}_t^{\delta,T}}
\sqrt{|\tilde{S}_t^{\delta,T}|_{\tilde{\mathbf{g}}_{\delta,T}}} \left(2|\bar{\mathbf{E}}_{\delta}|_{\bar{\mathbf{g}}_{\delta}}^2
+|\bar{\mathbf{k}}_{\delta}|_{\bar{\mathbf{g}}_{\delta}}^2\right)
d\bar{A}_{t}dt\\
:=&\mathcal{I}_{Q}(\delta,T)+\mathcal{I}_{\mathcal{J}}(\delta,T).
\end{split}
\end{align}

In what follows we will estimate each of the integrals $\mathcal{I}_{Q}(\delta,T)$ and $\mathcal{I}_{\mathcal{J}}(\delta,T)$ separately. Note that in the domain associated with these two integrals $\bar{\mathbf{g}}_{\delta}=\bar{g}$, $\bar{\mathbf{E}}_{\delta}=\bar{E}$, $\bar{\mathbf{k}}_{\delta}=\bar{k}$, and $\tilde{\mathbf{g}}_{\delta,T}=\tilde{g}_{\delta,T}:=u_{\delta,T}^4 \bar{g}$.
Consider first the electric field term. Using the divergence free property \eqref{barEproperties} and equality of charges \eqref{kjkl} produces
\begin{align}
\begin{split}
\mathcal{I}_{Q}(\delta,T)=&\frac{2}{(16\pi)^{3/2}}
\int_{0_*}^{\tilde{t}_0}
\sqrt{|\tilde{S}_t^{\delta,T}|_{\tilde{g}_{\delta,T}}}
\left(\int_{\tilde{S}_t^{\delta,T}}
|\bar{E}|_{\bar{g}}^2 d\bar{A}_t\right) dt\\
\geq& \frac{2}{(16\pi)^{3/2}}
\int_{0_*}^{\tilde{t}_0}
\frac{\sqrt{|\tilde{S}_t^{\delta,T}|_{\tilde{g}_{\delta,T}}}}
{|\tilde{S}_t^{\delta,T}|_{\bar{g}}}\left(\int_{\tilde{S}_t^{\delta,T}}
\bar{E}(\bar{\nu}) d\bar{A}_{t}\right)^2 dt\\
\geq&\frac{2(4\pi Q)^2\sqrt{|\tilde{S}_{\tilde{t}_0}^{\delta,T}|_{\tilde{g}_{\delta,T}}}}
{(16\pi)^{3/2}
\sup_{0_*< t\leq \tilde{t}_0}|\tilde{S}_t^{\delta,T}|_{\bar{g}}}
\int_{0_*}^{\tilde{t}_0}e^{\frac{(t-\tilde{t}_0)}{2}}dt\\
=&\frac{Q^2
\sqrt{\pi|\tilde{S}_{\tilde{t}_0}^{\delta,T}|_{\tilde{g}_{\delta,T}}}}
{\sup_{0_*< t\leq \tilde{t}_0}|\tilde{S}_t^{\delta,T}|_{\bar{g}}}
\left(1-e^{\frac{(0_* -\tilde{t}_0)}{2}}\right).
\end{split}
\end{align}
Due to the convergence of Lemma \ref{limitsolution}, in analogy with the proof of Theorem \ref{thmBYchargein} a subsequence of $\{\tilde{S}_{t}^{\delta_l,T_l}\}_{t=0_*}^{\tilde{t}_0}$ (denoted with the same notation) converges to a weak IMCF $\{\tilde{S}_{t}\}_{t=-\infty}^{t_0}$ within $(\Omega,\tilde{g}=u^4 \bar{g})$. Note that as in \cite[Lemma 8.1]{HuiskenIlmanen}, an appropriate translation downwards of the sequence of level set functions defining the weak flows is needed to obtain convergence.
Moreover since
\begin{equation}
|\tilde{S}^{\delta_l ,T_l}_{0}|_{\tilde{g}_{\delta_l,T_l}}=e^{-\tilde{t}_0}|\tilde{S}^{\delta_l ,T_l}_{\tilde{t}_0}|_{\tilde{g}_{\delta_l , T_l}},
\end{equation}
and
\begin{equation}
|\tilde{S}^{\delta_l ,T_l}_{0}|_{\tilde{g}_{\delta_l,T_l}}\to 0, \quad\quad\quad |\tilde{S}^{\delta_l ,T_l}_{\tilde{t}_0}|_{\tilde{g}_{\delta_l , T_l}}\rightarrow
|\tilde{S}_{t_0}|_{\tilde{g}}\neq 0 \quad\text{ as }\quad l\to\infty,
\end{equation}
it follows that $\tilde{t}_0=\tilde{t}_0 (l)\rightarrow \infty$. Therefore
\begin{equation}\label{qi}
\lim_{l\rightarrow\infty}\mathcal{I}_{Q}(\delta_l,T_l)\geq
\lambda\sqrt{\frac{\pi}{|\Sigma_h|}}Q^2.
\end{equation}
where
\begin{equation}\label{lambdadefinition}
\lambda=\frac{\sqrt{|\tilde{S}_{t_0}|_{\tilde{g}}
|\Sigma_h|}}{\sup_{-\infty< t\leq t_0}|\tilde{S}_t|_{\bar{g}}}.
\end{equation}

Consider now the angular momentum contribution. In this setting axisymmetry is assumed with $\eta$ denoting the rotational Killing field. The hypothesis \eqref{frobenius} combined with \cite[Appendix]{JaraczKhuri} shows that $h(\eta,\bar{\nu})=0$ on any axisymmetric surface $S\in(\Omega,\bar{g})$ with unit normal $\bar{\nu}$, in particular for the leaves of an IMCF emanating from an axisymmetric surface; this implies that $\bar{k}(\eta,\bar{\nu})=k(\eta,\bar{\nu})$. In addition, observe that the relation between area elements and surface normals with respect to the two metrics $g$ and $\bar{g}$ are given by
\begin{equation}
d\bar{A}=\sqrt{1+|\nabla f|_{S}|_g^2}dA, \quad\quad\quad
\overline{\nu}_i=\frac{\nu_i}{\sqrt{\overline{g}^{ij}\nu_i\nu_j}}
=\sqrt{\frac{1+|\nabla f|^2}{1+|\nabla f|_{S}|^2}}\nu_i.
\end{equation}
It follows that
\begin{align}
\begin{split}
(8\pi \mathcal{J})^2=&\left(\int_{S}k^{ij}\nu_i\eta_j dA\right)^2\\
=&\left(\int_{S}k^{ij}\overline{\nu}_i \eta_j
\sqrt{\frac{1+|\nabla f|_{S}|^2}{1+|\nabla f|^2}} dA\right)^2\\
\leq&\left(\int_{S}|k(\overline{\nu},\eta)|
d\bar{A}\right)^2\\
=&\left(\int_{S}|k(\overline{\nu},\eta)-h(\overline{\nu},\eta)|
d\bar{A}\right)^2\\
\leq&\left(\int_{S}|k-h|_{\bar{g}}
|\eta|d\bar{A}\right)^2\\
\leq&\int_{S}|k-h|_{\bar{g}}^2 d\bar{A}
\int_{S}|\eta|^2 d\bar{A}.
\end{split}
\end{align}
We then have
\begin{align}\label{a1'}
\begin{split}
\mathcal{I}_{\mathcal{J}}(\delta,T)=&\frac{1}{(16\pi)^{3/2}}
\int_{0_*}^{\tilde{t}_0}\sqrt{|\tilde{S}^{\delta,T}_t|_{\tilde{g}_{\delta,T}}}
\left(\int_{\tilde{S}^{\delta,T}_t}
|k-h|^2_{\bar{g}} d\bar{A}_{t}\right)dt\\
\geq&\frac{(8\pi \mathcal{J})^2}{(16\pi)^{3/2}}
\int_{0_*}^{\tilde{t}_0}\frac{\sqrt{|\tilde{S}^{\delta,T}_t|_{\tilde{g}_{\delta,T}}}}
{\int_{\tilde{S}^{\delta,T}_t}|\eta|^2 d\bar{A}_{t}} dt\\
\geq&
\frac{(8\pi \mathcal{J})^2}{(16\pi)^{3/2}\max_{\Omega}|\eta|^2}
\int_{0_*}^{\tilde{t}_0}
\frac{\sqrt{|\tilde{S}^{\delta,T}_{t}|_{\tilde{g}_{\delta,T}}}}
{|\tilde{S}^{\delta,T}_t|_{\bar{g}}} dt
\\
=&\frac{2(8\pi \mathcal{J})^2}{(16\pi)^{3/2}\max_{\Omega}|\eta|^2}
\frac{\sqrt{|\tilde{S}^{\delta,T}_{\tilde{t}_0}|_{\tilde{g}_{\delta,T}}}}{\sup_{0_*< t\leq \tilde{t}_0}|\tilde{S}^{\delta,T}_t|_{\bar{g}}}\left(1-e^{\frac{(0_* -\tilde{t}_0)}{2}}\right),
\end{split}
\end{align}
and hence
\begin{equation}\label{ji}
\lim_{l\rightarrow\infty}\mathcal{I}_{\mathcal{J}}(\delta_l,T_l)\geq
\frac{4\pi^2\lambda}{\mathcal{C}^2}\sqrt{\frac{4\pi}{|\Sigma_h|}}\mathcal{J}^2.
\end{equation}

Lastly, combining \eqref{tytyty}, \eqref{102938}, \eqref{qi}, and \eqref{ji} produces
\begin{equation}
m_{LY}(\Omega)\geq \frac{\gamma}{1+\gamma}\sqrt{\frac{|\Sigma_h|}{4\pi}}
+\lambda\sqrt{\frac{\pi}{|\Sigma_h|}}Q^2
+\frac{4\pi^2 \lambda}{\mathcal{C}^2}\sqrt{\frac{4\pi}{|\Sigma_h|}}\mathcal{J}^2.
\end{equation}
The desired inequality \eqref{LY3in} is then achieved by squaring both sides and rearranging terms.
\end{proof}

\begin{proof}[Proof of Theorems \ref{thm2.11} and \ref{thm2.12}]
This is a straightforward combination of the Wang-Yau proof \cite{WangYauPMT1}, and the proof of Theorems \ref{thm2.7} and \ref{thm2.9}.
\end{proof}

\section{Penrose-Like Inequality for Quasi-Local Mass With a Static Reference}\label{sec8}\setcounter{equation}{0}
\setcounter{section}{8}

Here we show that the inequalities established in previous sections can be extended to quasi-local masses having a static reference spacetime other than Minkowski space. Recall that a 4-dimensional static spacetime is a warped product $(\mathbb{R}\times \mathbb{M}^3,-V^2 dt^2 +\mathrm{g})$, in which $V\in C^{\infty}(\mathbb{M}^3)$
is positive except on the (possibly empty) boundary $\partial\mathbb{M}^3$ where it vanishes, and $\mathrm{g}$ is a Riemannian metric on $\mathbb{M}^3$. The triple $(\mathbb{M}^3,\mathrm{g},V)$ is referred to as a static manifold.
If the static spacetime is vacuum then the potential $V$ satisfies the static equations
\begin{equation}
\left(\Delta_{\mathrm{g}}V\right) \mathrm{g}-\text{Hess}_{\mathrm{g}}V+V\text{Ric}_{\mathrm{g}}= 0.
\end{equation}
The unique static vacuum black hole is the Schwarzschild solution, whose time slice is the static manifold $(\mathbb{M}^3_m=\mathbb{R}^3\setminus \left\{|x|<\frac{m}{2}\right\},\mathrm{g}_m,V_m)$ with
\begin{equation}
\mathrm{g}_m=\left(1+\frac{m}{2|x|}\right)^{4}\delta,\qquad \quad V_m=\frac{1-\frac{m}{2|x|}}{1+\frac{m}{2|x|}},
\end{equation}
where $m$ is the ADM mass parameter.

In \cite{ChenWangYau18} Chen, Wang, Wang, and Yau defined a version of the Wang-Yau quasi-local energy with respect to a general static spacetime. In particular, let $(\Sigma,\sigma,|\vec{H}|,\alpha)$ be a Wang-Yau data set and assume that there is an isometric embedding $\iota:\Sigma\hookrightarrow\mathbb{M}^3$ so that the time function $\tau=0$, then the static Liu-Yau mass is given by
\begin{equation}
m_{LY}^S(\Sigma)=\frac{1}{8\pi}\int_{\Sigma}V\left(H_s-|\vec{H}|\right)dA_{\sigma},
\end{equation}
where $H_s$ is the mean curvature of the isometric embedding $\iota(\Sigma)$. If the data bound a spacelike hypersurface $(\Omega,g,k)$ with vanishing extrinsic curvature $k=0$, then the static Brown-York mass takes the form
\begin{equation}
m_{BY}^S(\Sigma)=\frac{1}{8\pi}\int_{\Sigma}V\left(H_s-H\right)dA_{\sigma},
\end{equation}
where $H$ is the mean curvature of the embedding $\Sigma\hookrightarrow\Omega$.
Recently, Lu and Miao \cite{LuMiao} have proven a Penrose type inequality
for the static Brown-York mass in which the reference static spacetime is the Schwarzschild solution. Here we give an extension of their result to the static Liu-Yau mass setting.

\begin{theorem}\label{thm7.1}
Let $(\Omega,g,k)$ be an initial data set for the Einstein equations satisfying the dominant energy condition $\mu\geq |J|_g$ which is strict on horizons. Suppose that $\Omega$ is compact with boundary consisting of a disjoint union $\partial\Omega=\Sigma_h \cup\Sigma$ where $\Sigma_h$ is a (nonempty) apparent horizon, no other closed apparent horizons are present, and the single component $\Sigma$ is untrapped. If $\Sigma$ isometrically embeds into a Schwarzschild manifold $(\mathbb{M}^3_m,\mathrm{g}_m,V_m)$ as a star-shaped 2-convex surface on which $\text{Ric}_{\mathrm{g}_m}(\nu,\nu)\leq 0$,
then there exists a nonzero positive constant $\gamma$ independent of horizon area such that
\begin{equation}\label{LYstatic1in}
m+\frac{1}{8\pi}\int_{\Sigma}V_m(H_m -|\vec{H}|) dA_{\sigma}\geq \frac{\gamma}{1+\gamma}\sqrt{\frac{|\Sigma_{h}|}{4\pi}},
\end{equation}
where $H_m$ is the mean curvature of the embedding into $\mathbb{M}^3_m$.
Moreover, if in addition the enhanced energy condition $\mu_{EM}\geq|J_{EM}|_g$ holds, $(\Omega,g,k,E,B)$ and $\partial\Omega$ are axially symmetric, and $\Sigma_h$ is stable in the sense of apparent horizons then
\begin{equation}\label{LYstatic2in}
m+\frac{1}{8\pi}\int_{\Sigma}V_m(H_m -|\vec{H}|)dA_{\sigma}\geq \frac{\gamma}{1+\gamma}\left(\sum_{i=1}^I \sqrt{(Q_{h}^i)^4+4(\mathcal{J}_{h}^i)^2}\right)^{\frac{1}{2}}.
\end{equation}
\end{theorem}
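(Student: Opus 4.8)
The plan is to run the Liu--Yau argument of Section~\ref{sec3} and Theorem~\ref{thm2.7} almost verbatim, but with the flat Shi--Tam exterior replaced by a Schwarzschild--referenced exterior built as in Lu--Miao \cite{LuMiao}, and then to feed the resulting composite manifold into the conformal--factor machinery of Section~\ref{sec6}. Since the static Liu--Yau setting uses the time function $\tau=0$, the first step is to solve the Jang equation \eqref{Jang} on $\Omega$ with Dirichlet data $f=0$ on $\Sigma$ and asymptotically cylindrical blow--up at $\Sigma_h$. This yields a Jang deformation $(\bar{\Omega},\bar{g})$ whose induced boundary metric $\sigma$ is preserved (so the Schwarzschild reference data $H_m$ are unaffected) and whose scalar curvature obeys the weak nonnegativity \eqref{inequality11}. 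As $\Sigma$ is untrapped, \eqref{h2} gives $H_+:=\bar{H}-X(\bar{\nu})\geq|\vec{H}|>0$, which is exactly the positivity needed to start an exterior foliation off $\Sigma$.

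The central new ingredient is the static Shi--Tam construction. Using the isometric embedding $\iota:\Sigma\hookrightarrow\mathbb{M}^3_m$ together with the star--shaped and $2$--convex hypotheses, I would foliate the Schwarzschild exterior of $\iota(\Sigma)$ and carry out the Lu--Miao quasi--spherical construction relative to the static potential $V_m$, producing an asymptotically flat, scalar--flat fill--in $(M_+,g_+)$ whose inner boundary is $\Sigma$ with mean curvature $H_+$. The curvature hypothesis $\text{Ric}_{\mathrm{g}_m}(\nu,\nu)\leq 0$ is precisely what drives monotonicity of the static analogue of the Shi--Tam quantity along this foliation. Its endpoint values give
\[
m+\frac{1}{8\pi}\int_{\Sigma}V_m\bigl(H_m-H_+\bigr)\,dA_{\sigma}\geq\bar{\mathbf{m}},
\]
where $\bar{\mathbf{m}}$ is the ADM mass of the composite $(\bar{\Omega}\cup M_+,\bar{g}\cup g_+)$. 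Since $|\vec{H}|\leq H_+$ and $V_m>0$, replacing $H_+$ by $|\vec{H}|$ only enlarges the left side, so
\[
m+\frac{1}{8\pi}\int_{\Sigma}V_m\bigl(H_m-|\vec{H}|\bigr)\,dA_{\sigma}\geq\bar{\mathbf{m}}.
\]

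It then remains to bound $\bar{\mathbf{m}}$ below by the horizon area term. The composite manifold has exactly the corner structure treated in Lemma~\ref{gluing}, so after smoothing across $\Sigma$, conformally deforming to nonnegative scalar curvature, and solving the horizon boundary--value problem \eqref{equation}, Proposition~\ref{solution} and Proposition~\ref{lemma5.5} apply. The one thing to check is that Section~\ref{sec6} is insensitive to replacing the Euclidean exterior by a Schwarzschild--asymptotic one: this holds because $(M_+,g_+)$ is asymptotically flat and scalar flat, and because the constant $\gamma$ in \eqref{definitiongamma} is defined purely on the Jang region $\bar{\Omega}$. Proposition~\ref{lemma5.5} then gives
\[
\bar{\mathbf{m}}\geq\frac{\gamma}{1+\gamma}\sum_{i=1}^{I}\sqrt{\frac{|\Sigma_h^i|}{4\pi}}\geq\frac{\gamma}{1+\gamma}\sqrt{\frac{|\Sigma_h|}{4\pi}},
\]
which combined with the previous display establishes \eqref{LYstatic1in}. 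For \eqref{LYstatic2in}, under the enhanced energy condition, axisymmetry, and horizon stability I would instead keep the per--component sum and apply Proposition~\ref{prop1} in the form $|\Sigma_h^i|\geq4\pi\sqrt{(Q_h^i)^4+4(\mathcal{J}_h^i)^2}$ to each $\Sigma_h^i$.

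The hard part is the second paragraph: verifying that the Lu--Miao static Shi--Tam fill--in exists with inner boundary data prescribed by the Jang quantity $H_+$ (rather than the smooth embedded mean curvature used in \cite{LuMiao}) and that the monotonicity survives under the hypotheses listed; here the interplay between the star--shaped/$2$--convex conditions and the sign of $\text{Ric}_{\mathrm{g}_m}(\nu,\nu)$ must be tracked carefully. A secondary technical point is confirming that the smoothing and conformal steps of Section~\ref{sec6}, originally set up against a flat background, transfer without change to the Schwarzschild--asymptotic exterior.
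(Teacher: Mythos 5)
Your proposal follows essentially the same route as the paper: Jang deformation with $\tau=0$ blowing up at the horizon, a Lu--Miao static Bartnik--Shi--Tam extension with inner boundary mean curvature $\bar{H}-X(\bar{\nu})\geq|\vec{H}|$ whose monotone quantity bounds the static quasi-local mass below by the ADM mass of the composite, followed by the Section~\ref{sec6} conformal-factor argument (Lemma~\ref{gluing}, Propositions~\ref{solution} and~\ref{lemma5.5}) and the per-component area--angular momentum--charge inequality. The "hard part" you flag is handled in the paper exactly as in Shi--Tam, by prescribing the initial value $\mathrm{u}_0=\left(\bar{H}-X(\bar{\nu})\right)^{-1}\kappa H_m$ in the quasi-spherical construction, with the monotonicity cited from \cite{LuMiao}.
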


\begin{proof}
This result follows by combining the proof of Theorem \ref{thm2.7} above, and the proof of Theorem 1.1 in \cite{LuMiao}. Here we provide an outline. Let $(\Omega,\bar{g}, \bar{k},X,\bar{E})$ be the Jang deformation of the initial data with the solution of Jang's equation blowing-up at the horizon and satisfying the Dirichlet boundary condition $f=0$. A Bartnik-Shi-Tam extension $(\bar{M}_+,\bar{g}_+,\bar{k}_+ =0,
X_+ =0, \bar{E}_+ =0)$ of the Jang initial data may be constructed in the following way with zero scalar curvature and boundary $\partial\bar{M}_+ =\Sigma$ having mean curvature
\begin{equation}\label{879065}
H_+ = \bar{H}-X(\bar{\nu}).
\end{equation}
The 2-convex condition means that the two elementary symmetric polynomials of the principal curvatures $\kappa_1$, $\kappa_2$ of the embedding $\Sigma\hookrightarrow\mathbb{M}^3_m$ are positive, and this together with $\text{Ric}_{\mathrm{g}_m}(\nu,\nu)\leq 0$ implies via the Gauss equations that $\Sigma$ has positive Gauss curvature and is thus topologically a sphere. These hypotheses together with the star-shaped assumption lead to smooth long time existence of the flow inside $\mathbb{M}^3_m$ defined by
\begin{equation}
\partial_r =\kappa\nu,\quad\quad\quad\quad \kappa=\frac{1}{4}\left(\kappa_1^{-1}+\kappa_2^{-1}\right),
\end{equation}
with leaves denoted by $\{\Sigma_r\}_{r=0}^{\infty}$ and $\Sigma_0=\Sigma$.
The region of the Schwarzschild manifold foliated by this flow is $(\bar{M}_+, \kappa^2 dr^2 +\sigma_r)$, where $\sigma_r$ is the induced metric on the leaves $\Sigma_r$. The extension metric then takes the form $\bar{g}_+=\mathrm{u}^2 dr^2 +\sigma_r$ for an appropriately chosen function $\mathrm{u}:\bar{M}_+ \rightarrow\mathbb{R}_+$, with
$\mathrm{u}_0=\left(\bar{H}-X(\bar{\nu})\right)^{-1} \kappa H_m$ in order to ensure \eqref{879065}. As shown in \cite{LuMiao} the function
\begin{equation}
m+\frac{1}{8\pi}\int_{\Sigma_r}V_m(H_{m,r} - H_{\mathrm{u},r})dA_{\sigma_r}
\end{equation}
is monotonically nonincreasing in $r$, where $H_{m,r}$ and $H_{\mathrm{u},r}$ are the mean curvatures of $\Sigma_r$ with respect to the Schwarzschild metric and $\bar{g}_+$ respectively. Furthermore this function converges to the mass $\bar{\mathbf{m}}$ of the extension metric $\bar{g}_+$ as $r\rightarrow\infty$. It then follows from \eqref{tytyty} that
\begin{equation}
m+m_{LY}^S(\Sigma)\geq m+\frac{1}{8\pi}\int_{\Sigma_r}V_m \left[H_{m} - \left(\bar{H}-X(\bar{\nu})\right)\right]dA_{\sigma}
\geq \bar{\mathbf{m}}
\geq \frac{\gamma}{1+\gamma}
\sqrt{\frac{|\Sigma_{h}|}{4\pi}}.
\end{equation}
This establishes \eqref{LYstatic1in}. As in the proof of Theorem \ref{thm2.7}, inequality \eqref{LYstatic2in} is then obtained by applying the area-angular momentum-charge inequality \cite{BrydenKhuri,ClementJaramilloReiris}.
\end{proof}

In a similar manner, by combining the proofs of Theorem \ref{thm2.9} above and Theorem 1.1 in \cite{LuMiao} we obtain the following result.

\begin{theorem}\label{thm7.2}
Let $(\Omega,g,k,E,B)$ be an axisymmetric initial data set for the Einstein-Maxwell equations satisfying \eqref{frobenius}, $J(\eta)=0$, and the energy condition $\mu_{EM}\geq |J|_g$ which is strict on horizons. Suppose that $\Omega$ is compact with axisymmetric boundary consisting of a disjoint union $\partial\Omega=\Sigma_h \cup\Sigma$ where $\Sigma_h$ is a single component (nonempty) apparent horizon, no other closed apparent horizons are present, and the single component $\Sigma$ is untrapped. If $\Sigma$ isometrically embeds into a Schwarzschild manifold $(\mathbb{M}^3_m,\mathrm{g}_m,V_m)$ as a star-shaped 2-convex surface on which $\text{Ric}_{\mathrm{g}_m}(\nu,\nu)\leq 0$,
then there exists a nonzero positive constant $\lambda$ independent of horizon area such that
\begin{equation}\label{LYstatic3in}
\left(m+m_{LY}^S(\Sigma)\right)^2\geq \left(\frac{\gamma}{1+\gamma}\sqrt{\frac{|\Sigma_{h}|}{4\pi}}
+\lambda \sqrt{\frac{\pi}{|\Sigma_h|}}Q^2\right)^2
+\frac{\lambda\gamma}{1+\gamma}\frac{8\pi^2\mathcal{J}^2}{\mathcal{C}^2},
\end{equation}
where $\gamma$ is as in Theorem \ref{thm7.1}. Moreover, the same inequality holds without the assumption of axisymmetry if the contribution from angular momentum is removed.
\end{theorem}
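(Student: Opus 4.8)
The plan is to run the proof of Theorem \ref{thm2.9} essentially verbatim, replacing only the initial Shi--Tam (Minkowski reference) monotonicity step by the static Schwarzschild monotonicity of Lu--Miao that was already exploited in the proof of Theorem \ref{thm7.1}. The conceptual point is that the charge and angular momentum contributions on the right-hand side of \eqref{LYstatic3in} are produced entirely by the conformal-factor and inverse mean curvature flow analysis carried out on the glued manifold $(\bar{\mathbf M},\bar{\mathbf g})$, and this analysis depends only on the charge and angular-momentum content of the Jang data inside $\Omega$, not on which static reference is used to build the exterior extension. Thus the choice of reference enters the argument only through the single inequality relating the quasi-local mass to $\bar{\mathbf m}$.

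Concretely, I would first form the Jang deformation $(\bar\Omega,\bar g,\bar k,X,\bar E)$ of $(\Omega,g,k,E,B)$, with the solution of Jang's equation blowing up at the horizon $\Sigma_h$ and satisfying the Dirichlet condition $f=0$ on $\Sigma$ (which corresponds to the static Liu--Yau mass, $\tau=0$). Exactly as in the proof of Theorem \ref{thm7.1}, the 2-convex and star-shaped hypotheses together with $\mathrm{Ric}_{\mathrm g_m}(\nu,\nu)\le 0$ yield positive Gauss curvature for $\Sigma\hookrightarrow\mathbb M^3_m$ and smooth long-time existence of the flow $\partial_r=\kappa\nu$, $\kappa=\tfrac14(\kappa_1^{-1}+\kappa_2^{-1})$, foliating the extension region. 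Choosing the profile $\mathrm u_0=(\bar H-X(\bar\nu))^{-1}\kappa H_m$ makes the Bartnik--Shi--Tam extension $(\bar M_+,\bar g_+)$ have zero scalar curvature and boundary mean curvature $H_+=\bar H-X(\bar\nu)$, matching the Jang data in the weak-stability sense. The Lu--Miao monotonicity of $m+\tfrac{1}{8\pi}\int_{\Sigma_r}V_m(H_{m,r}-H_{\mathrm{u},r})\,dA_{\sigma_r}$, together with its convergence to $\bar{\mathbf m}$, then gives, as in Theorem \ref{thm7.1}, the estimate $m+m_{LY}^S(\Sigma)\ge\bar{\mathbf m}$, which plays here the role that $m_{LY}(\Sigma)\ge\bar{\mathbf m}$ plays in \eqref{tytyty}.

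The remaining steps are identical to the proof of Theorem \ref{thm2.9}. I would smooth the corner at $\Sigma$ via Lemma \ref{gluing}, solve the conformal problem \eqref{equation} using Proposition \ref{solution}, and pass to the limit with Lemma \ref{limitsolution}, so that Proposition \ref{lemma5.5} yields $\bar{\mathbf m}\ge\lim_{l}\tilde{\mathbf m}_{\delta_l,T_l}+\tfrac{\gamma}{1+\gamma}\sqrt{|\Sigma_h|/4\pi}$. Then, estimating $\tilde{\mathbf m}_{\delta_l,T_l}$ from below by Geroch monotonicity along an IMCF in $(\tilde{\mathbf M}_T,\tilde{\mathbf g}_{\delta,T})$ and invoking the conformal scalar curvature identity \eqref{conformalscalar}, the charge and angular momentum bounds \eqref{qi} and \eqref{ji} follow verbatim; the axisymmetry, $J(\eta)=0$, and \eqref{frobenius} hypotheses enter only here, through the identity $h(\eta,\bar\nu)=0$ used to control the angular momentum integral. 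Chaining these inequalities and squaring gives \eqref{LYstatic3in}, and dropping the $\mathcal J$-term removes the axisymmetry assumption.

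I expect the main obstacle to be verifying that the Lu--Miao construction remains valid with the \emph{Jang-deformed} boundary data in place of the original surface: namely that the profile $\mathrm u$ solving the zero-scalar-curvature equation along the $\kappa$-flow exists globally, produces an asymptotically flat metric $\bar g_+$, and yields a finite mass $\bar{\mathbf m}$, and that the interface between this static-type extension and the Jang metric still satisfies the weak inequality \eqref{6yj} after the smoothing of Lemma \ref{gluing}. Once the monotone quantity is confirmed to converge to $\bar{\mathbf m}$ for this extension, the coupling to the conformal-factor and IMCF machinery is automatic, since that machinery never references the exterior foliation.
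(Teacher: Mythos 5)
Your proposal is correct and follows essentially the same route as the paper, which proves Theorem \ref{thm7.2} precisely by combining the Lu--Miao static Schwarzschild extension argument (carried out in detail in the proof of Theorem \ref{thm7.1}) with the conformal-factor and inverse mean curvature flow machinery from the proof of Theorem \ref{thm2.9}. Your observation that the choice of static reference enters only through the single inequality $m+m_{LY}^S(\Sigma)\geq\bar{\mathbf{m}}$, with the charge and angular momentum contributions produced entirely by the interior analysis, is exactly the structure of the paper's argument.
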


\begin{remark}
As in Section \ref{sec7} this proof may be parlayed into versions of Theorems \ref{thm2.11} and \ref{thm2.12} for the static Wang-Yau mass. In addition,
we note that the same methods can be used to generalize the result of P.-N. Chen \cite{Chen}, concerning Brown-York mass with general spherically symmetric static reference, to the setting of static Liu-Yau mass.
\end{remark}

\end{document}